\setlist[enumerate,1]{label=\textup{(\arabic*)}}
\numberwithin{equation}{section}
\newtheorem{theorem}[equation]{Theorem}
\newtheorem{lemma}[equation]{Lemma}
\newtheorem{proposition}[equation]{Proposition}
\newtheorem{corollary}[equation]{Corollary}
\theoremstyle{definition}
\newtheorem{definition}[equation]{Definition}
\theoremstyle{remark}
\newtheorem{remark}[equation]{Remark}
\renewcommand{\phi}{\varphi}
\DeclareMathSymbol{\boxprod}{\mathbin}{AMSa}{"03} 
\DeclareMathSymbol{\mixprod}{\mathbin}{AMSa}{"4F} 
\newcommand{\dirsum}{\oplus}
\newcommand{\disjunion}{\sqcup}
\newcommand{\dual}{^\vee}
\newcommand{\homeo}{\approx}
\newcommand{\includesin}{\hookrightarrow}
\newcommand{\intersect}{\cap}
\newcommand{\iso}{\cong}
\newcommand{\Mackey}[1]{{\underline {#1}}}
\newcommand{\smsh}{\wedge}
\newcommand{\Susp}{\Sigma}
\newcommand{\susp}{\Sigma}
\newcommand{\tensor}{\otimes}
\newcommand{\union}{\cup}
\newcommand{\C}{{\mathbb C}}
\newcommand{\PP}{\mathbb{P}}
\newcommand{\R}{{\mathbb R}}
\newcommand{\Z}{\mathbb{Z}}
\newcommand{\HS}{\Mackey{\mathbb{H}}}
\newcommand{\HH}{\Mackey H_\GG}
\newcommand{\HHR}{{\Mackey{\widetilde H}}\vphantom{H}_\GG}
\newcommand{\HR}{{\widetilde H}\vphantom{H}}
\newcommand{\cwd}[1][]{\widehat{c}_\omega^{\ifthenelse{\equal{#1}{}}{}{{\:#1}}}}
\newcommand{\cxwd}[1][]{\widehat{c}_{\chiw}^{\ifthenelse{\equal{#1}{}}{}{{\:#1}}}}
\newcommand{\cld}[1][]{\widehat{c}_{\lambda}^{\ifthenelse{\equal{#1}{}}{}{{\:#1}}}}
\newcommand{\cxld}[1][]{\widehat{c}_{\chi\lambda}^{\ifthenelse{\equal{#1}{}}{}{{\:#1}}}}
\newcommand{\clod}[1][]{\widehat{c}_{\omega_1}^{\ifthenelse{\equal{#1}{}}{}{{\:#1}}}}
\newcommand{\cxlod}[1][]{\widehat{c}_{\chi\omega_1}^{\ifthenelse{\equal{#1}{}}{}{{\:#1}}}}
\newcommand{\cltd}[1][]{\widehat{c}_{\omega_2}^{\ifthenelse{\equal{#1}{}}{}{{\:#1}}}}
\newcommand{\cxltd}[1][]{\widehat{c}_{\chi\omega_2}^{\ifthenelse{\equal{#1}{}}{}{{\:#1}}}}
\newcommand{\cltensd}[1][]{\widehat{c}_{\omega_1\tensor\omega_2}^{\ifthenelse{\equal{#1}{}}{}{{\:#1}}}}
\newcommand{\cxltensd}[1][]{\widehat{c}_{\chi\omega_1\tensor\omega_2}^{\ifthenelse{\equal{#1}{}}{}{{\:#1}}}}
\newcommand{\cd}[1][]{\widehat{c}^{\ifthenelse{\equal{#1}{}}{}{{\:#1}}}}
\newcommand{\Cpq}[2]{\C^{#1+#2\sigma}}
\newcommand{\Cp}[1]{\C^{#1}}
\newcommand{\Cq}[1]{\C^{#1\sigma}}
\newcommand{\Xpq}[2]{\PP(\Cpq{#1}{#2})}
\newcommand{\Xp}[1]{\PP(\C^{#1})}
\newcommand{\Xq}[1]{\PP(\Cq{#1})}
\newcommand{\chiw}{\chi\omega}
\newcommand{\Qpq}[2]{Q_{#1,#2}}
\newcommand{\eQp}[1]{Q_{#1}}        
\newcommand{\Qexpq}[2]{Q(\Cpq {#1}{#2})}   
\newcommand{\Qexp}[1]{Q(\Cp {#1})}
\newcommand{\Qexq}[1]{Q(\Cq {#1})}
\renewcommand{\vec}[1]{\accentset{\rightharpoonup}{#1}} 
\newcommand{\Spqk}[3]{\mathbb{X}_{{#1},{#2}}^{#3}}                  
\newcommand{\gr}{\Diamond}      
\newcommand{\divp}[1][]{\phi_{#1}}
\newcommand{\divq}[1][]{\phi^\chi_{#1}}
\newcommand{\sorb}[1]{{\widehat{\mathscr{O}}_{#1}}}
\newcommand{\rels}[1]{\langle #1 \rangle}
\DeclareMathOperator{\Hom}{Hom}
\DeclareMathOperator{\grad}{grad}
\newcommand{\GG}{{C_2}}
\begin{document}

\title[Complex quadrics II]
{The $\GG$-equivariant ordinary cohomology of complex quadrics II: The symmetric case}
\date{\today}

\author{Steven R. Costenoble}
\address{Steven R. Costenoble\\Department of Mathematics\\Hofstra University\\
  Hempstead, NY 11549, USA}
\email{Steven.R.Costenoble@Hofstra.edu}
\author{Thomas Hudson}
\address{Thomas Hudson, College of Transdisciplinary Studies, DGIST, 
Daegu, 42988, Republic of Korea}
\email{hudson@dgist.ac.kr}

\keywords{Equivariant cohomology, equivariant characteristic classes, quadrics}

\subjclass[2020]{Primary 55N91;
Secondary 14N15, 55N25, 57R91}

\abstract
In this, the second of three papers about $\GG$-equivariant complex quadrics,
we calculate the equivariant ordinary cohomology of smooth symmetric quadrics
graded on the representation ring of $\Pi BU(1)$
and with coefficients in the Burnside Mackey functor.
These calculations exhibit various interesting properties, including the first
naturally occurring
example we are aware of where the cohomology is not just the sum of shifted copies of
the cohomology of a point, but also has summands that are shifted copies
of the cohomology of the free orbit $\GG/e$.
\endabstract

\maketitle
\tableofcontents

\section{Introduction}

(Note: This is a preliminary version and will be rewritten so that its nomenclature and notation
is consistent with that of \cite{CH:QuadricsI}.)

This is the second of three papers about the $\GG$-equivariant ordinary cohomology
of smooth equivariant projective quadric varieties over $\C$.
The first, \cite{CH:QuadricsI},  which covers antisymmetric quadrics, 
has an appendix with background material and notations that
will assume here.
In this paper we discuss
symmetric quadrics, ones defined by quadratic polynomials $f$
with $tf = f$ where $t$ is the generator of $\GG$.
It is convenient to divide them into four types.
In what follows, we write $\Cpq pq$ for the sum of $p$ copies of
the trivial complex representation $\C$ and $q$ copies of the nontrivial
representation $\Cq{}$.

\newpage
\begin{definition}\ 
\begin{itemize}
    \item \emph{Quadrics of type (D,D)} are subvarieties of $\Xpq{2p}{2q}$ of the form
    \begin{multline*}
        \Qpq{2p}{2q} = \Qexpq{2p}{2q} = \\
        \bigl\{ [x_1:\cdots:x_p:u_p:\cdots:u_1:y_1:\cdots:y_q:v_q:\cdots:v_1] \in \Xpq{2p}{2q} \mid \\
                            \textstyle \sum_i x_iu_i + \sum_j y_jv_j = 0 \bigr\}.
    \end{multline*}

    \item \emph{Quadrics of type (B,D)} are subvarieties of $\Xpq{(2p+1)}{2q}$ of the form
    \begin{multline*}
        \Qpq{2p+1}{2q} = \Qexpq{(2p+1)}{2q} = \\
        \bigl\{ [x_1:\cdots:x_p:z:u_p:\cdots:u_1:y_1:\cdots:y_q:v_q:\cdots:v_1] \in \Xpq{(2p+1)}{2q} \mid \\
                            \textstyle \sum_i x_iu_i + z^2 + \sum_j y_jv_j = 0 \bigr\}.
    \end{multline*}

    \item \emph{Quadrics of type (D,B)} are subvarieties of $\Xpq{2p}{(2q+1)}$ of the form
    \begin{multline*}
        \Qpq{2p}{2q+1} = \Qexpq{2p}{(2q+1)} = \\
        \bigl\{ [x_1:\cdots:x_p:u_p:\cdots:u_1:y_1:\cdots:y_q:w:v_q:\cdots:v_1] \in \Xpq{2p}{(2q+1)} \mid \\
                            \textstyle \sum_i x_iu_i + \sum_j y_jv_j + w^2 = 0 \bigr\}.
    \end{multline*}

    \item \emph{Quadrics of type (B,B)} are subvarieties of $\Xpq{(2p+1)}{(2q+1)}$ of the form
    \begin{multline*}
        \Qpq{2p+1}{2q+1} = \Qexpq{(2p+1)}{(2q+1)} = \\
        \bigl\{ [x_1:\cdots:x_p:z:u_p:\cdots:u_1:y_1:\cdots:y_q:w:v_q:\cdots:v_1] \in \Xpq{(2p+1)}{(2q+1)} \mid \\
                            \textstyle \sum_i x_iu_i + z^2 + \sum_j y_jv_j + w^2 = 0 \bigr\}.
    \end{multline*}
\end{itemize}
\end{definition}

We will often use the abbreviated notation
\[
    [\vec x:z:\vec u:\vec y:w:\vec v] = [x_1:\cdots:x_p:z:u_p:\cdots:u_1:y_1:\cdots:y_q:w:v_q:\cdots:v_1]
\]
and the obvious variations for the other types.

Nonequivariantly, quadrics of type (D,D) and (B,B) are of type D, while those of type
(B,D) and (D,B) are of type B. The fixed points are given by
\[
    \Qexpq mn^\GG = \Qexp m \disjunion \Qexq n.
\]
If $m\neq 2$ and $n\neq 2$, then the induced map
$RO(\Pi BU(1)) \to RO(\Pi\Qpq mn)$ is surjective
(bijective if $m, n > 2$).
If $m=1$ or $n=1$, the corresponding component of the fixed set is empty
and our results will be valid with appropriate interpretations of the formulas.
If $m=2$ or $n=2$, $\Qexp 2$ consists of two points and the map on gradings is
no longer surjective.
In this paper, we will grade all equivariant cohomology on $RO(\Pi BU(1))$
nonetheless, and write $\HH^\gr$ to mean $\HH^{RO(\Pi BU(1))}$.
In the third paper in this series, \cite{CH:QuadricsIII}, we will return to the cases
$m=2$ or $n=2$ and compute the cohomology of those quadrics with their natural,
larger, grading.

We recall the well-known nonequivariant result; see, for example, Lemmas~1 and~2 of \cite{EdidinGraham:quadricbundles}.


\begin{proposition}\label{prop:nonequivariant}
The cohomology of a quadric of type B is given by
\[
    H^*(\Qexp{2p+1}) \iso \Z[c,y]/\rels{c^p - 2y,\ y^2}
\]
where $\grad c = 2$ and $\grad y = 2p$.
A basis is given by
\[
    \{ 1, c, c^2, \ldots, c^{p-1}, y, cy, \ldots, c^{p-1}y \},
\]
with one basis element in each even grading from $0$ to $2(2p-1)$.
(If $p = 0$, $\eQp 1 = \emptyset$ and these relations force the cohomology to be $0$.)

If $p > 1$, the cohomology of a quadric of type D is given by
\[
    H^*(\Qexp{2p}) \iso \Z[c,y]/\rels{c^p - 2cy,\ y^2 - \epsilon c^{p-1}y}
\]
where
\[
    \epsilon =
    \begin{cases}
        0 & \text{if $p$ is even} \\
        1 & \text{if $p$ is odd.}
    \end{cases}
\]
Here, $\grad c = 2$ and $\grad y = 2(p-1)$.
A basis is given by
\[
    \{ 1, c, c^2, \ldots, c^{p-1}, y, cy, \ldots, c^{p-1}y \},
\]
with one basis element in each even grading from $0$ to $2(2p-2)$ except for
grading $2(p-1)$, where there are two basis elements, $c^{p-1}$ and $y$.
An alternative basis often used is
\[
    \{ 1, c, c^2, \ldots, c^{p-2}, c^{p-1} - y, y, cy, \ldots, c^{p-1}y \}
\]

In case of the type D quadric $\Qexp 2$, which consists of two points, we can write
\[
    H^*(\Qexp 2) \iso \Z[y]/\rels{y^2 - y}
\]
where $\grad y = 0$. 
We can take as a basis for the only nonzero group either
\[
    \{ 1, y \} \qquad\text{or}\qquad \{ 1-y, y \}.
\]
Both $y$ and $1-y$ are idempotents.
\qed
\end{proposition}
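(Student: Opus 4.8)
\emph{Reductions and the trivial case.} Since over $\C$ any two smooth quadrics of a given dimension are projectively equivalent, I would first replace $f$ by its split form; write $Q$ for the quadric under consideration and $n = \dim_{\C} Q$, so that $\Qexp m$ has $n = m-2$. The case $\Qexp 2$ is immediate: two reduced points have cohomology ring $\Z \times \Z \iso \Z[y]/\rels{y^2-y}$ concentrated in degree $0$, with $y$ and $1-y$ the primitive idempotents. Likewise in the empty cases ($m=1$, or $p=0$, and so on) one checks the displayed relations force the ring to be $0$, as asserted.

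\emph{Additive structure.} Next I would give $Q$ an affine paving, by induction on $m$. Fix a point $P \in Q$ and let $H$ be the tangent hyperplane there; a coordinate computation in the split model shows $Q \setminus H \iso \C^n$ and that $H \intersect Q$ is the projective cone, with vertex $P$, over the smaller smooth quadric $\Qexp{m-2} \subset \PP(\C^{m-1})$. Deleting the vertex from a projective cone over $Y$ leaves the total space of a line bundle over $Y$, which inherits an affine paving from $Y$ (with dimensions shifted by one); so by induction $Q$ is paved by affine cells, one in each complex dimension $0,1,\dots,n$, with one extra cell in dimension $n/2$ when $n$ is even. Hence $H^*(Q;\Z)$ is free abelian, concentrated in even degrees, with a single generator in each degree $2k$, $0\le k\le 2n$, except for a second generator in degree $n$ when $n$ is even. (Equivalently one may invoke the Bruhat decomposition of the homogeneous space $SO(m)/P$, or combine the Lefschetz hyperplane theorem, Poincar\'e duality and the Euler characteristic, which this induction computes to be $n+1$ for $n$ odd and $n+2$ for $n$ even.)

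\emph{Cup products.} Let $c \in H^2(Q)$ be the restriction of the hyperplane class. The Lefschetz hyperplane theorem identifies the basis elements below degree $n$ with powers of $c$: in type B ($m=2p+1$, $n=2p-1$) these are $1,c,\dots,c^{p-1}$, and in type D ($m=2p$, $n=2p-2$, $p>1$) they are $1,c,\dots,c^{p-2}$. In type B I would then use Poincar\'e duality to define $y \in H^{2p}$ dual to $c^{p-1}$; the identity $\int_Q c^k y\cdot c^{p-1-k} = \int_Q c^{p-1}y = 1$ shows $c^k y$ generates $H^{2p+2k}$ for $0\le k\le p-1$, completing the asserted basis, while $\int_Q c^{2p-1} = 2$ (the degree of $Q$ in $\PP(\C^{2p+1})$) gives $c^p = 2y$ upon pairing against $c^{p-1}$, and $y^2 = 0$ for dimension reasons. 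A short monomial reduction bounds the rank of $\Z[c,y]/\rels{c^p-2y,\ y^2}$ by $2p$, the rank of $H^*(Q)$, so the evident surjection is an isomorphism. The type D case runs identically up to degree $n$: here $H^n(Q)\iso\Z^2$, and since $\int_Q(c^{p-1})^2 = \int_Q c^{2p-2} = 2$ while the cup-product form is unimodular, $c^{p-1}$ is a primitive vector; extending it to a basis $\{c^{p-1},y\}$ of $H^n(Q)$ and subtracting a suitable multiple of $c^{p-1}$, I may assume $\int_Q c^{p-1}y = 1$. Then $c^k y$ generates $H^{n+2k}$ for $1\le k\le p-1$, pairing against $c^{p-2}$ gives $c^p = 2cy$, and writing $y^2 = \epsilon\,c^{p-1}y$ in top degree, unimodularity of the symmetric form with $\langle c^{p-1},c^{p-1}\rangle=2$, $\langle c^{p-1},y\rangle=1$, $\langle y,y\rangle=\epsilon$ forces $\epsilon\in\{0,1\}$.

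\emph{Identifying $\epsilon$ — the main obstacle.} Determining this last constant is the only genuinely geometric step, and is where I expect the real work. I would use the Hodge index theorem: a quadric has Hodge structure of Tate type (a formal consequence of the affine paving), so the primitive part of $H^n(Q;\R)$ is a line of type $(p-1,p-1)$, and the Hodge--Riemann relations give its self-pairing the sign $(-1)^{p-1}$; since this line is orthogonal to $c^{p-1}$, whose square is $+2$, the cup-product form on $H^n(Q)$ has signature $(2,0)$ when $p$ is odd and $(1,1)$ when $p$ is even. Comparing with the signature of the $2\times 2$ Gram matrix above — which is $(2,0)$ precisely when $\epsilon=1$ — gives $\epsilon=1$ iff $p$ is odd, as in the statement. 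A more elementary but equally classical alternative is to take $y$ and $c^{p-1}-y$ to be the classes of the two rulings of $\Qexp{2p}$ by $(p-1)$-planes and read $\epsilon$ off from their standard self- and cross-intersection numbers (this also explains the ``alternative basis''). With $\epsilon$ in hand, the isomorphism $\Z[c,y]/\rels{c^p-2cy,\ y^2-\epsilon c^{p-1}y}\iso H^*(\Qexp{2p})$ follows by the same rank count as in type B.
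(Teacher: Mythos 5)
Your proposal is correct, but there is nothing in the paper to compare it against line by line: the paper treats Proposition~\ref{prop:nonequivariant} as a recollection of a classical fact and simply cites Lemmas~1 and~2 of Edidin--Graham, ending with \qed\ and no argument. What you have written is a self-contained reconstruction of the standard proof: the affine paving by induction on the tangent-hyperplane section (equivalently the Bruhat decomposition) gives the additive structure, Lefschetz plus Poincar\'e duality and the degree computation $\int_Q c^{\dim Q}=2$ give the relations $c^p=2y$ (type B) and $c^p=2cy$ (type D), and the only real content is pinning down $\epsilon=\langle y,y\rangle$ in the middle degree of the even-dimensional case, which you do correctly either by the Hodge--Riemann signature count (the primitive line in $H^{n}$ has self-pairing of sign $(-1)^{p-1}$, so the form is definite iff $p$ is odd, matching $\det\begin{pmatrix}2&1\\1&\epsilon\end{pmatrix}=2\epsilon-1$) or by the intersection numbers of the two rulings, whose sum is $c^{p-1}$ --- the latter is essentially what Edidin--Graham do and is the more useful viewpoint for this paper, since the rulings are exactly the geometric objects ($P$, $tP$, $P'$) that reappear in the equivariant computation of Proposition~\ref{prop:binate cohomology} and Remark~\ref{rem:bb divp times divq}. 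Two cosmetic points: in the additive step you write ``a single generator in each degree $2k$, $0\le k\le 2n$,'' which should read $0\le k\le n$ (degrees $0$ through $2n$); and when you normalize $\int_Q c^{p-1}y=1$ by modifying a basis vector $w$ with $\langle c^{p-1},w\rangle=a$, you should note that unimodularity together with $\langle c^{p-1},c^{p-1}\rangle=2$ forces $a$ to be odd, which is what makes $\pm a+2k=1$ solvable. Neither affects the validity of the argument.
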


With some abuse of notation, the calculation of $H^*(\Qexp 2)$ can be considered a special case
of the calculation for general type D quadrics. The abuse is that we need to interpret $c^0 = 1$,
while the relations imply that $c = 0$ in this case.

We will calculate the equivariant cohomologies of symmetric quadrics in
Theorems~\ref{thm:bb mutliplicative}, \ref{thm:db multiplicative},
and~\ref{thm:dd multiplicative}.
The results are somewhat different for each type, as might be expected from the nonequivariant
calculations. The outlines of the arguments are similar, though, and each depends on the existence
of certain elements that exhibit a \emph{divisibility} phenomenon we first saw
in calculating the cohomology of finite projective spaces in \cite{CHTFiniteProjSpace}.
There the divisibility had a clear geometric explanation, but here it is a bit more subtle,
and depends on a general result we prove as Proposition~\ref{prop:general divisibility},
which we expect will see more use in future calculations.

The result for quadrics of type (B,B) has a feature anticipated by Ferland and Lewis
in \cite{FerlandLewis} that we have not yet seen in previous calculations.
They showed that, under conditions satisfied by, for example, smooth complex varieties,
equivariant cohomology is free over the cohomology of a point. However, by this they meant
that the cohomology could be the sum of copies of
$\HH^{RO(\GG)}(\GG/\GG)$ \emph{and also} copies of $\HH^{RO(\GG)}(\GG/e)$.
They called generators of summands of the form $\HH^{RO(\GG)}(\GG/\GG)$
generators \emph{of type $\GG/\GG$} and generators of summands of the form
$\HH^{RO(\GG)}(\GG/e)$ generators \emph{of type $\GG/e$}.
(See the following section for more details.)
Our result for quadrics of type (B,B) can be stated as follows,
and will be proved as Theorem~\ref{thm:bb mutliplicative}.

\begin{theorem}
As an algebra over $\HH^\gr(BU(1))$, $\HH^\gr(\Qpq{2p+1}{2q+1})$ is generated by
an element $x_{p,q}$ of type $\GG/\GG$ and grading $(p+1)\omega + (q+1)\chi\omega - 2$
and an element $y$ of type $\GG/e$ and grading $2(p+q)$,
subject to the facts that
\begin{align*}
    \divp &= \cwd[p] - e^{-2(q+1)}\kappa\zeta_1^q x_{p,q} \\
\intertext{is infinitely divisible by $\zeta_0$ and}
    \divq &= \cxwd[q] - e^{-2(p+1)}\kappa\zeta_0^p x_{p,q}
\end{align*}
is infinitely divisible by $\zeta_1$,
and the following relations, where $c = \zeta^{-1}\rho(\cwd)$:
\begin{align*}
    c^{p+q+1} &= 2cy  \\
    y ^2 &= \begin{cases}
                    0 &\text{if $p+q+1$ is even} \\
                    c^{p+q}y  &\text{if $p+q+1$ is odd}
                 \end{cases} \\
    \rho(x_{p,q}) &= \iota^{2(q+1)}\zeta^{p-q} cy  \\
    x_{p,q}^2 &= 0 \\
    \divp \divq &= \tau(\iota^{2q}\zeta^{p-q}y ).
\end{align*}
\qed
\end{theorem}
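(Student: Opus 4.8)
The plan is to follow the template the introduction sketches for all three symmetric types. First I would determine the additive structure of $\HH^\gr(\Qpq{2p+1}{2q+1})$ using an equivariant cell structure built by filtering off the sign-representation coordinates: intersecting the quadric with $\{v_1=0\}$ produces the cone (with a fixed vertex) on the quadric $\Qpq{2p+1}{2q-1}$, while the open complement contributes a single cell, so peeling off the conjugate pairs $(y_j,v_j)$ one at a time and finally removing $w$ relates $\Qpq{2p+1}{2q+1}$ through a chain of cofiber sequences to the projective space $\Xpq{2p+1}{2q+1}$ of \cite{CHTFiniteProjSpace} and to lower quadrics, on which one inducts. Sorting the cells into those lying over the fixed set $\Qexp{2p+1}\disjunion\Qexq{2q+1}$ and those that are free, the outcome is that $\HH^\gr(\Qpq{2p+1}{2q+1})$ is, in the Ferland--Lewis sense, a sum of shifted copies of $\HH^\gr(\GG/\GG)$ together with one shifted copy of $\HH^\gr(\GG/e)$ coming from the free cells; that $\GG/e$-summand is the Ferland--Lewis phenomenon and is generated by $y$. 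This additive description is what one uses throughout to check identities grading by grading.

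Next I would realize the generators. The class $c=\zeta^{-1}\rho(\cwd)$ is the normalized Euler class of the tautological bundle pulled back along $\Qpq{2p+1}{2q+1}\hookrightarrow\Xpq{2p+1}{2q+1}$, and $y$ is the type-$\GG/e$ class found above. The class $x_{p,q}$ of grading $(p+1)\omega+(q+1)\chi\omega-2$ is the pushforward of $1$ along the inclusion of a suitable equivariant Schubert-type subvariety --- morally a maximal isotropic subspace cut by a hyperplane --- whose twisted complex codimension is exactly that grading; by the intersection theory recorded in Proposition~\ref{prop:nonequivariant} its underlying restriction is a unit multiple of $cy$, and matching units and gradings gives $\rho(x_{p,q})=\iota^{2(q+1)}\zeta^{p-q}cy$. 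One also gets $x_{p,q}^2=0$: its underlying restriction sits above the top nonequivariant degree and vanishes, its image in geometric fixed points vanishes, and the additive structure shows these force $x_{p,q}^2=0$. With $x_{p,q}$ in hand, $\divp$ and $\divq$ are defined by the displayed formulas; the coefficients $e^{-2(q+1)}\kappa\zeta_1^q$ and $e^{-2(p+1)}\kappa\zeta_0^p$ are the unique elements of $\HH^\gr(BU(1))$ of the correct grading for which $\divp$ restricts to zero on the fixed component $\Qexq{2q+1}$ and $\divq$ restricts to zero on $\Qexp{2p+1}$.

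The two divisibility assertions then follow from Proposition~\ref{prop:general divisibility}, whose hypotheses reduce to exactly this fixed-point vanishing together with a control of local cohomology of $\Qpq{2p+1}{2q+1}$ read off from the cell structure. Among the relations, $c^{p+q+1}=2cy$ and the formula for $y^2$ are the images under restriction to underlying cohomology of the corresponding relations of Proposition~\ref{prop:nonequivariant} for the underlying type-D quadric $\Qexp{2(p+q+1)}$, that restriction being injective in the relevant gradings and sending $c,y$ to the nonequivariant $c,y$. The relation $\divp\divq=\tau(\iota^{2q}\zeta^{p-q}y)$ is the subtle one: expanding the product and using $x_{p,q}^2=0$ turns it into an identity relating $\cwd[p]\cxwd[q]$, $\rho(x_{p,q})$ and known products in $\HH^\gr(BU(1))$; restricting to underlying cohomology --- where $\rho\tau$ is the transfer $1+t^*$ and the identity becomes the nonequivariant relation between the two families of maximal isotropic subspaces --- and to geometric fixed points --- where both sides vanish --- identifies the defect of $\cwd[p]\cxwd[q]$ from being a genuine product as precisely the asserted transferred class. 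Finally one checks, using the additive structure of the first paragraph, that these relations together with the two divisibilities express every element as a combination of the nonequivariant basis monomials of Proposition~\ref{prop:nonequivariant} and $x_{p,q}$, so that the presentation is complete.

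I expect the main obstacle to be the transfer relation $\divp\divq=\tau(\iota^{2q}\zeta^{p-q}y)$, hand in hand with the completeness check: both require understanding precisely how the divisibility by $\zeta_0$ and the divisibility by $\zeta_1$ interact on the single $\GG/e$-summand, and that interaction is exactly the new feature of the (B,B) case, absent from the earlier projective-space and antisymmetric-quadric calculations.
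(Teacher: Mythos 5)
Your overall architecture (additive structure, then generators, then relations pinned down by $\rho$ and the fixed-point map, then a completeness check) matches the paper's in spirit, and your route to the additive structure --- a cell filtration peeling off the sign coordinates --- differs from the paper's single cofibration $(\Spqk pq{p+q+1})_+\to(\Qpq{2p+1}{2q+1})_+\to\susp^\nu j(\Xpq pq)_+$ onto the binate variety $\Spqk pq{p+q+1}=P\union tP$, whose Mayer--Vietoris computation is what actually produces and locates the $\HS_{\GG/e}[\zeta_0^{-1}]$ summand. But there is a genuine gap at exactly the step the paper flags as delicate: the divisibility of $\divp$ and $\divq$. You characterize the coefficients $e^{-2(q+1)}\kappa\zeta_1^q$ and $e^{-2(p+1)}\kappa\zeta_0^p$ as the unique ones making $\divp$ restrict to \emph{zero} on $\Qexq{(2q+1)}$ and $\divq$ to zero on $\Qexp{2p+1}$, and you say the hypothesis of Proposition~\ref{prop:general divisibility} reduces to this vanishing. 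That is not what the proposition says. For divisibility by $\zeta_0$ the hypothesis concerns the restriction $\eta_0$ to the \emph{other} component $X^0=\Qexp{2p+1}$ (the one over which $\zeta_0$ is not already invertible), and it requires that restriction to be a \emph{transfer}, not zero. Indeed the paper computes $\eta_0(\divp)=2\zeta_1^p y-\kappa\zeta_1^p y=g\zeta_1^p y=\tau(\zeta^p y)\neq 0$: the entire point of the correction term is that $\eta_0(\cwd[p])=2\zeta_1^p y$ and $\eta_0(e^{-2(q+1)}\kappa\zeta_1^q x_{p,q})=\kappa\zeta_1^p y$ combine into the multiple of $g=2-\kappa$ lying in the image of $\tau$. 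Vanishing on $\Qexq{(2q+1)}$ --- where $\zeta_0$ is already invertible --- is neither the hypothesis nor sufficient, and is not in fact satisfied by $\divp$; so as written your plan stalls precisely where the new input is needed, and with it the surjectivity of $i^*$ (hence the splitting into short exact sequences) and the naming of the basis elements $\zeta_0^{-k}\divp$, $\zeta_1^{-k}\divq$.

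Two secondary points. Your completeness check is asserted rather than argued: the paper compares the short exact sequence $0\to(J+I)/I\to A/I\to A/(J+I)\to 0$ of the abstract presentation with $0\to\rels{x_{p,q}}\to\HH^\gr(\Qpq{2p+1}{2q+1})\to\HH^\gr(\Spqk pq{p+q+1})\to 0$ and identifies each end term separately (in particular, $A/(J+I)$ must reproduce the twisted relation $\cwd[p]\cxwd[q]=\zeta_0^q\zeta_1^p\tau(y)$ of the binate variety, not the relation in $\Xpq pq$); deducing completeness ``from the additive structure'' skips this. And the paper gets $x_{p,q}^2=0$ immediately from two disjoint geometric representatives of $x_{p,q}$, namely $j(\Xpq pq)$ and $\Xpq pq\subset\Spqk pq{p+q+1}$, which is cleaner than, though compatible with, your $\rho$-plus-fixed-points argument.
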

The first two relations say that the nonequivariant cohomology is that
of the underlying quadric of type D while the others give the truly equivariant relations.
As a module over the cohomology of a point, 
for each coset $m\Omega_1 + RO(\GG)\subset RO(\Pi BU(1))$,
the cohomology groups $\HH^{m\Omega_1+RO(\GG)}(\Qpq{2p+1}{2q+1})$
have $2(p+q)$ generators of type $\GG/\GG$ and one of type $\GG/e$,
giving the first natural example that we are aware of with a generator of that last type.

The cohomologies of symmetric quadrics $\Qpq mn$ of type (B,D), (D,B), and (D,D) have generators only of type $\GG/\GG$,
so are simpler in that respect. However, they have other interesting features due to the interplay
between the parities of $m$, $n$, and $m+n$.

We offer these calculations as additions to the stable of known calculations
in equivariant ordinary cohomology
and as examples of the benefits of using the extended grading.
In \cite{CH:QuadricsI} we used the calculation for one of the antisymmetric quadrics to give an
equivariant refinement of the result that there are 27 lines on a general cubic surface
in $\PP^3$.
In \cite{CH:QuadricsIII}, once we have expanded further the grading on the exceptional symmetric cases,
we will use one of those cases to give another refinement of the 27 lines result.

\section{Mackey functor-valued ordinary cohomology}\label{sec:mackey}

We mentioned in the introduction that the cohomology of a quadric of type (B,B) has
summands isomorphic to $\HH^{RO(\GG)}(\GG/e)$ as well as ones isomorphic to $\HH^{RO(\GG)}(\GG/\GG)$.
Because of this, it is more natural in this context to think of ordinary cohomology as
taking values in (graded) Mackey functors rather than groups.
That is indicated by the underline in the notation. 
We will also use $\HH$ to denote unreduced cohomology, and
use $\HHR$ for reduced cohomology when needed.

A Mackey functor is a functor on the stable orbit category $\sorb\GG$,
and we picture such a functor $\Mackey T$ as
\[
 \xymatrix{
  \Mackey T(\GG/\GG) \ar@/_/[d]_{\rho} \\
  \Mackey T(\GG/e) \ar@/_/[u]_{\tau} \ar@(dl,dr)[]_{t}
 }
\]
where $\rho$ is restriction, $\tau$ is the transfer, and $t$ is the action of the nontrivial
element $t\in\GG$.
For any $\Mackey T$ we have that $\tau\rho$ is multiplication by $g = [\GG/e] \in A(\GG)$,
which gives the action of $A(\GG)$ on $\Mackey T(\GG/\GG)$.
The map $t$ defines an action of $\Z[\GG]$ on $\Mackey T(\GG/e)$, and
$\rho\tau$ is multiplication by $1+t$.

If $\Mackey T$ is a Mackey functor, we should be careful when discussing elements
to write either $x\in \Mackey T(\GG/\GG)$ or $x\in \Mackey T(\GG/e)$.
However, we will so often be interested in the former that we will write
$x\in\Mackey T$ to mean $x\in\Mackey T(\GG/\GG)$.

There are two commonly used Mackey functors, the projective functors
$\Mackey A = \sorb\GG(-,\GG/\GG)$ and $\Mackey A_{\GG/e} = \sorb\GG(-,\GG/e)$,
the first of which is the Burnside ring Mackey functor which we take
as the coefficients for ordinary cohomology throughout.
We can picture these as follows.
\[
 \Mackey A\colon \quad
     \xymatrix{
		A(\GG) \ar@/_/[d]_{\epsilon} \\
		\Z \ar@/_/[u]_{\cdot g} \ar@(dl,dr)[]_{1}
	}
\qquad\qquad
 \Mackey A_{\GG/e}\colon 
    \xymatrix{
		\Z \ar@/_/[d]_{\cdot (1+t)} \\
		\Z[\GG] \ar@/_/[u]_{\epsilon} \ar@(dl,dr)[]_{\cdot t}
	  }
\]
Here, $\epsilon\colon A(\GG)\to\Z$ is the cardinality map $\epsilon(a+bg) = a + 2b$
and $\epsilon\colon \Z[\GG]\to \Z$ is $\epsilon(a+bt) = a + b$.
We think of $\Mackey A$ as generated by the element $1\in \Mackey A(\GG/\GG) = A(\GG)$
and $\Mackey A_{\GG/e}$ as generated by $1\in \Mackey A_{\GG/e}(\GG/e) = \Z[\GG]$.
Related to this is that
\begin{align*}
    \Hom_{\sorb\GG}(\Mackey A,\Mackey T) &\iso \Mackey T(\GG/\GG) \\
\intertext{and}
    \Hom_{\sorb\GG}(\Mackey A_{\GG/e},\Mackey T) &\iso \Mackey T(\GG/e),
\end{align*}
in each case via the image of $1$.

To abbreviate the notation, we will write
\begin{align*}
    \HS &= \HH^{RO(\GG)}(\GG/\GG) \\
\intertext{and}
    \HS_{\GG/e} &= \HH^{RO(\GG)}(\GG/e).
\end{align*}
Because ordinary cohomology is ``ordinary'' and we use $\Mackey A$ as the coefficient system, it follows that
\begin{align*}
    \HS^0 &\iso \Mackey A \\
\intertext{and}
    \HS_{\GG/e}^0 &\iso \Mackey A_{\GG/e}.
\end{align*}
$\HS$ is a graded Mackey functor-valued ring, which we can take as meaning that
both $\HS(\GG/\GG)$ and $\HS(\GG/e)$ are graded rings, $\rho$ is a ring map,
and $\tau$ is a map of $\HS(\GG/\GG)$-modules.
The structure of $\HS(\GG/\GG)$ is discussed in more detail in the Appendix to
\cite{CH:QuadricsI} and we will use the notations introduced there for its elements.
The ring $\HS(\GG/e)$ is quite simple, being the nonequivariant
cohomology of a point regraded on $RO(\GG)$, which gives
$\HS(\GG/e) \iso \Z[\iota^{\pm 1}]$ where $\iota \in \HS^{\sigma-1}(\GG/e)$.

We have $\HS_{\GG/e}(\GG/\GG) \iso \HS(\GG/e)$ and that
$\HS_{\GG/e}$ is a projective $\HS$-module. In fact,
\[
    \HS_{\GG/e} \iso \Mackey A_{\GG/e}[\iota^{\pm 1}]
\]
where $\iota\in \HS_{\GG/e}^{\sigma-1}(\GG/\GG)$.
$\HS_{\GG/e}$ is generated as an $\HS$-module by $1\in \HS_{\GG/e}^0(\GG/e) = \Z[\GG]$.
Note that we are thinking of this as a module generator;
as an abelian \emph{group}, $\HS_{\GG/e}^0(\GG/e)$ is generated by two elements, $1$ and $t$.
This distinction will show up when we calculate the cohomology of quadrics of type (B,B);
see Remark~\ref{rem:bb divp times divq}.

As mentioned in the introduction, and following Ferland and Lewis \cite{FerlandLewis},
by a free $\HS$-module we mean the following.

\begin{definition}
A Mackey-functor valued graded module over $\HS$ is said to be \emph{free}
if it is a direct sum of shifted copies of $\HS$ and $\HS_{\GG/e}$.
A generator of a summand of the form $\susp^\alpha \HS$ will be called
a basis element \emph{of type $\GG/\GG$} and a generator of a summand
of the form $\susp^\alpha\HS_{\GG/e}$ will be called
a basis element \emph{of type $\GG/e$}.
\end{definition}

Explicitly, if $\Mackey F$ is such a free module, a generator of type $\GG/\GG$ lives
in $\Mackey F(\GG/\GG)$ while a generator of type $\GG/e$ lives in $F(\GG/e)$.

Because $\HS$ has only one grading that looks like $\Mackey A$, the gradings
of generators of type $\GG/\GG$ in a free $\HS$-module are well-defined;
Ferland and Lewis mention a possible ambiguity for ambient cyclic groups $C_p$,
but that occurs only when $p > 3$.
However, because of the invertible element $\iota$, the gradings of generators
of type $\GG/e$ are determined only up to multiples of $\sigma - 1\in RO(\GG)$.

Ferland and Lewis showed that $\GG$-cell complexes with even-dimensional cells
will have cohomology free over $\HS$ in the sense above,
with summands of $\HS_{\GG/e}$ necessary if we allow free cells.
(A similar freeness result was shown for more general cell complexes
in \cite{KronholmFree}, as corrected in \cite{HogleMayFreeness},
when the coefficient system is the constant $\Mackey\Z/2$ Mackey functor.)
Previous computations we are aware of have resulted in cohomologies
that are direct sums of copies of $\HS$, and one of the interesting
aspects of our computations here is that the cohomology of quadrics of type (B,B)
has a summand of type $\HS_{\GG/e}$.

\section{Divisibility}

This section is a bit of a digression to establish a technical result we will need in
the calculation. If the reader is willling to accept Proposition~\ref{prop:general divisibility}
below, this section can be skipped over on first reading.
But we expect this result to be helpful in other calculations in the future.

One of the phenomena that we saw occur with finite projective spaces
in \cite{CHTFiniteProjSpace} and antisymmetric quadrics in \cite{CH:QuadricsI}
is the presence of elements that are infinitely divisible
by $\zeta_0$ or $\zeta_1$. We will see the same phenomenon with symmetric quadrics,
but the existence of some of these elements is a little more delicate and
depends on the following general result.

Recall from \cite[Part~3]{Co:InfinitePublished} that there is a $\GG$-space $K(2)$,
characterized by the fact that it is nonequivariantly contractible and its fixed set
$K(2)^\GG = K^0\disjunion K^1$ consists of two contractible components. 
The representation ring of its fundamental groupoid is the same as that of $BU(1)$
and we will use the same names for its elements.

This space can be thought of as
classifying the component structure of the fixed sets of $\GG$-spaces as follows:
Suppose that $X$ is a $\GG$-space with its fixed set $X^\GG = X^0\disjunion X^1$ written
as the disjoint union of two (not necessarily connected) subspaces. Then there is, up to $\GG$-homotopy,
a unique $\GG$-map $X\to K(2)$ taking $X^0$ to $K^0$ and $X^1$ to $K^1$.
For example, there is such a map $BU(1)\to K(2)$ with the usual splitting
of $BU(1)^\GG = \Xp\infty \disjunion \Xq\infty$.

Moreover, there are elements $\zeta_0\in \HHR^{\Omega_0}(K(2)_+)$ and
$\zeta_1\in \HHR^{\Omega_1}(K(2)_+)$ mapping to the elements of the same names in the cohomology
of $BU(1)$. For a general map $X\to K(2)$ as above, we again use the names $\zeta_0$ and $\zeta_1$
for the images of these elements in the cohomology of $X$, and we call $\zeta_0$ the cohomology
element corresponding to $X^0$ and $\zeta_1$ the element corresponding to $X^1$.
These elements will always satisfy $\zeta_0\zeta_1 = \xi$, and will have the properties that
$\zeta_0$ restricted to $X^1$ is invertible, as is $\zeta_1$ restricted to $X^0$.

We next recall a lemma from \cite{Co:InfinitePublished}, which was slightly misstated there.
The proper statement, and what was actually proved, is the following,
where $X$ is again a $\GG$-space with $X^\GG = X^0\disjunion X^1$ as above.

\begin{lemma}[Lemma~16.2 of \cite{Co:InfinitePublished}]\label{lem:first divisibility}
Let $f\colon Y\to X$ be an ex-$\GG$-space
and let $Y^0 = (f^\GG)^{-1}(X^0) \subseteq Y^\GG$.
If $Y^0$ is contractible to the base section over $X$, then $\zeta_0$ acts by isomorphisms
on $\HHR^\gr(Y)$.
\qed
\end{lemma}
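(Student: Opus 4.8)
The plan is to use a cofiber sequence to strip off the part of $Y$ lying over $X^0$ — precisely the part on which $\zeta_0$ is not visibly a unit — and then to run a $G\mathscr{R}$-CW induction on what is left, in which $\zeta_0$ acts cell by cell through restrictions that are units. At the outset I would record, from \cite{Co:InfinitePublished}, the properties of $\zeta_0$ that make this work: $\zeta_0$ has virtual dimension zero, so its restriction $\zeta_0|_{\GG/e}$ along a free orbit is a unit ($\pm 1$); and, as listed among the properties of $K(2)$, its restriction $\zeta_0|_{X^1}$ to the other fixed component is a unit. The component $X^0$ is the only place $\zeta_0$ can fail to restrict to a unit, which is exactly why the hypothesis is about $Y^0$.

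\emph{Reduction.} Write $\bar Y^0\subseteq Y$ for the sub-ex-$\GG$-space consisting of $Y^0$ together with the image of the base section. The hypothesis that $Y^0$ is contractible to the base section over $X$ says that $\bar Y^0$ fiberwise $\GG$-deformation retracts onto the base section, so $\HHR^\gr(\bar Y^0)=0$. Using the classifying map $Y\to K(2)$ and the cofibration $K^0\includesin K(2)$ — and carrying the construction out over the homotopy pullback $X\times_{K(2)}K^0$, so that $X^0$ itself need not be a well-behaved subspace of $X$, which is one of the reasons $K(2)$ was introduced — one obtains a cofiber sequence $\bar Y^0\to Y\to Z$ of ex-$\GG$-spaces over $X$. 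Its long exact sequence and the vanishing of $\HHR^\gr(\bar Y^0)$ give an isomorphism $\HHR^\gr(Z)\xrightarrow{\ \sim\ }\HHR^\gr(Y)$ of $\HH^\gr(X)$-modules that commutes with multiplication by $\zeta_0$, so it suffices to prove the lemma for $Z$. By construction $Z^\GG$ is, away from the base section, the reduced fixed set of $Y$ over $X^1$; in particular $Z$ has a $G\mathscr{R}$-CW structure over $X$ with no fixed cells over $X^0$.

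\emph{Cell induction.} Filter $Z$ by the skeleta of such a structure, built from free cells $\GG_+\smsh D^n$ over $X$ and fixed cells over $X^1$. On the $\HHR^\gr$-contribution of a free cell, $\zeta_0$ acts through $\zeta_0|_{\GG/e}$; on that of a fixed cell over $X^1$, through $\zeta_0|_{X^1}$; in both cases through a unit, hence by an isomorphism. The long exact sequences of the pairs $(Z^{(k)},Z^{(k-1)})$ and the five lemma carry this to each $\HHR^\gr(Z^{(k)})$, and the Milnor $\lim^1$ sequence expressing $\HHR^\gr(Z)$ in terms of the $\HHR^\gr(Z^{(k)})$, which respects the $\zeta_0$-action, carries it to $\HHR^\gr(Z)$, hence to $\HHR^\gr(Y)$.

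The main obstacle is the reduction: making the cofiber sequence $\bar Y^0\to Y\to Z$ precise with no hypotheses on $X^0\subseteq X$ — this is where the passage to $K(2)$ and the homotopy pullback do the real work — and checking that $Z$ genuinely carries a $G\mathscr{R}$-CW structure with its fixed cells confined to $X^1$. Once that is in place the induction is routine, the one delicate point being the colimit, where one must check that the $\lim^1$ bookkeeping commutes with multiplication by $\zeta_0$.
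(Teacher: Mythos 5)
You should first note that the paper does not actually prove this statement: it is quoted (with a corrected hypothesis) from Lemma~16.2 of \cite{Co:InfinitePublished} and is stated here without proof, so there is no in-paper argument to compare yours against. Judged on its own terms, your strategy --- use the contractibility hypothesis to kill the part of $Y$ whose fixed points lie over $X^0$, then induct over a cell structure on the remainder, where $\zeta_0$ only ever acts through its restrictions to $\GG/e$ and to $X^1$, both units --- is the natural one, and the individual homological steps (the long exact sequence of the cofibration, the five lemma up the skeleta, the $\lim^1$ comparison) are sound. The facts you invoke about $\zeta_0$ are indeed among the stated properties of $K(2)$.

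The argument is nonetheless incomplete at exactly the step you flag as the main obstacle, and that step is where the content of the lemma lives. You assert, but do not establish, that $Z$ admits a $G\mathscr R$-CW structure over $X$ whose fixed cells all lie over $X^1$: knowing that $Z^\GG$ is trivial over $X^0$ does not by itself produce such a structure; one needs an equivariant CW-approximation theorem that respects the decomposition $X^\GG = X^0\disjunion X^1$ (equivalently, the classifying map to $K(2)$), together with the identification of the $\zeta_0$-action on each cell's contribution with multiplication by the appropriate restriction of $\zeta_0$. Two further points need care in the reduction. First, $\bar Y^0\includesin Y$ must be (replaced by) a fiberwise $\GG$-cofibration before the long exact sequence is available. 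Second, the hypothesis must be read as saying that $\bar Y^0$ itself fiberwise deformation retracts to the base section; if ``contractible to the base section over $X$'' only meant that the inclusion is fiberwise null-homotopic through $Y$, then $Z\hmtpc Y\vee_X\susp\bar Y^0$ and your long exact sequence would give $\HHR^\gr(Z)\iso\HHR^\gr(Y)\dirsum\HHR^\gr(\susp\bar Y^0)$ rather than $\HHR^\gr(Z)\iso\HHR^\gr(Y)$, leaving a summand over $X^0$ on which the divisibility of $\zeta_0$ is exactly what is in question. The stronger reading is the one consistent with how the lemma is applied in Proposition~\ref{prop:general divisibility}, so this is a matter of making the hypothesis precise rather than a fatal flaw, but as written the proof defers the decisive points rather than proving them.
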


With this we can prove the following generalization of \cite[Proposition~16.1]{Co:InfinitePublished}.
We let $i_0 \colon X^0\to X$ be the inclusion and let $\eta_0 = i_0^*$.

\begin{proposition}\label{prop:general divisibility}
If $x\in \HHR^\alpha(X_+)$ and there exists a $y\in \HR^\alpha(X^0_+)$ such that $\eta_0(x) = \tau(y)$,
then $x$ is infinitely divisible by $\zeta_0$.
\end{proposition}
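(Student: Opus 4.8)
The plan is to reduce the claim to Lemma~\ref{lem:first divisibility} by building an appropriate ex-$\GG$-space over $X$. Given $x\in\HHR^\alpha(X_+)$, the natural thing to do is to realize $x$ as a cofiber-type class: form the ex-$\GG$-space $Y$ over $X$ whose sections encode the data of $x$, so that ``$\zeta_0$ acts by isomorphisms on $\HHR^\gr(Y)$'' translates back, via a long exact sequence over $X$, into ``$x$ is infinitely divisible by $\zeta_0$.'' Concretely, I would let $Y$ be the mapping cone (over $X$, in ex-$\GG$-spaces) of a map representing $x$ from $X_+$ to an Eilenberg--MacLane ex-$\GG$-space $\susp^\alpha\HS_X$ over $X$; dividing $x$ by $\zeta_0$ repeatedly corresponds to lifting through the tower built from $\zeta_0$, and Lemma~\ref{lem:first divisibility} gives the lift once we verify its hypothesis.

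The key steps, in order, are: (1) set up the ex-$\GG$-space $Y\to X$ representing $x$, together with the cofiber sequence relating $\HHR^\gr(Y)$, $\HHR^\gr(X_+)$, and the EM-piece, so that divisibility of $x$ by $\zeta_0$ is equivalent to $\zeta_0$ acting invertibly on $\HHR^\gr(Y)$ (or at least on the relevant portion); (2) identify $Y^0 = (f^\GG)^{-1}(X^0)$ and show that the hypothesis $\eta_0(x)=\tau(y)$ forces $Y^0$ to be contractible to the base section over $X$ --- this is where the transfer hypothesis is used, since a class restricted from $X^0$ that is in the image of the transfer $\tau$ becomes trivial upon passing to the fixed-set component $X^0$ of the free-cell replacement, so the corresponding section over $X^0$ is nullhomotopic; (3) apply Lemma~\ref{lem:first divisibility} to conclude $\zeta_0$ acts by isomorphisms on $\HHR^\gr(Y)$; (4) feed this back through the exact sequence of step (1) to extract the infinite divisibility of $x$ itself, being careful that ``infinitely divisible'' means the $\zeta_0$-divisibility can be iterated, which matches the fact that $\zeta_0$ acts invertibly (not just surjectively) in step (3).

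The main obstacle I expect is step (2): making precise the claim that $\eta_0(x)=\tau(y)$ implies $Y^0$ contracts to the base section over $X$. The issue is that $\tau(y)$ being a transfer is a statement about Mackey-functor-valued cohomology groups, whereas contractibility to the base section is a statement about ex-$\GG$-spaces; bridging these requires interpreting the transfer geometrically --- i.e., recognizing that a class pulled back from $X^0$ lying in the image of $\tau$ means the restriction of the representing section to $X^0$ factors through a free $\GG$-cell, hence becomes equivariantly null over $X^0$ where the $\GG$-action is already trivial on $X^0$. One then has to check this nullhomotopy can be taken fiberwise over $X$, i.e., as a contraction to the base section in the ex-$\GG$-space sense. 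Comparing with the proof of \cite[Proposition~16.1]{Co:InfinitePublished}, which this statement generalizes, the new content is precisely allowing $\eta_0(x)$ to be a transfer rather than zero, so I would model the argument on that proof and insert the transfer analysis at exactly the point where the original argument uses $\eta_0(x)=0$.
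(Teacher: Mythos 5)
Your overall strategy --- build an auxiliary ex-$\GG$-space over $X$, apply Lemma~\ref{lem:first divisibility} to it, and push the resulting divisibility forward to $x$ --- is the right shape, and steps (1), (3), (4) match the paper in spirit. But the construction you propose in step (1) and, especially, the use you make of the hypothesis in step (2) contain a genuine gap. If $Y$ is the fiberwise mapping cone of a map $g\colon X_+\to\Susp^\alpha H$ representing $x$, then $Y^0$ is the mapping cone of $g^\GG$ restricted over $X^0$, and that cone is contractible to the base section only if $g^\GG|_{X^0}$ is a fiberwise equivalence --- which it essentially never is. The hypothesis $\eta_0(x)=\tau(y)$ cannot force this. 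The difficulty you flag yourself is real and is not repairable along the lines you sketch: the fact that transferred classes die on geometric fixed points (because $(\GG_+)^\GG=\ast$) is a statement about a cohomology class vanishing, whereas the hypothesis of Lemma~\ref{lem:first divisibility} is a point-set/homotopical condition on the fixed set of the ex-space itself. A nullhomotopy of a cohomology class does not produce a fiberwise contraction of $Y^0$ to the base section. Moreover, with $Y$ the cofiber of $g$, the exact sequence runs $\HHR^\gr(Y)\to\HHR^\gr(\Susp^\alpha H)\xrightarrow{g^*}\HHR^\gr(X_+)$, so even granting step (2) you would be dividing the fundamental class of $H$, not $x$, and extracting divisibility of $x$ from that is not automatic.

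The paper inverts the logic. It builds the free cell into the space from the start: let $Z$ be the homotopy pushout over $X$ of
\[
    X\times S^\sigma \from X^0\times S^\sigma \xrightarrow{1\times c} X^0\times\Susp^\sigma(\GG)_+,
\]
where $c\colon S^\sigma\to\Susp^\sigma(\GG)_+$ is the collapse map inducing the transfer. Because $(\Susp^\sigma(\GG)_+)^\GG$ is just the basepoint, the part of $Z^\GG$ lying over $X^0$ is contractible to the base section \emph{by construction}, with no input from the hypothesis; so Lemma~\ref{lem:first divisibility} applies to $Z$ unconditionally. The hypothesis is then used purely homologically: the Mayer--Vietoris sequence of the pushout has middle map
\[
    \HH^{\alpha}(X_+)\dirsum\HHR^{\alpha}(X^0_+\smsh\GG_+)\xrightarrow{\ \eta_0-\tau\ }\HHR^{\alpha}(X^0_+),
\]
and $\eta_0(x)-\tau(y)=0$ says exactly that $(x,y)$ lifts to a class $z\in\HHR^{\alpha+\sigma}(Z)$; since every class on $Z$ is infinitely divisible by $\zeta_0$ and $z\mapsto x$, so is $x$. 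I would redo your step (2) in this form: the transfer enters as one leg of a pushout and as the map $\tau$ in an exact sequence, not as a contractibility statement to be proved.
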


\begin{proof}
Embed $\GG = \{1,-1\}\subset \R^\sigma$ and consider the collapse map
\[
    c\colon S^\sigma \to \Susp^\sigma (\GG)_+.
\]
The transfer $\tau$ is, up to suspension, induced by the following map over $X$:
\[
    X^0\times S^\sigma \xrightarrow{1\times c} X^0\times \Susp^\sigma(\GG)_+.
\]
Let $Z$ be the homotopy pushout of ex-spaces over $X$ in the following diagram:
\[
    \xymatrix{
        X^0\times S^\sigma \ar[r]^-{c} \ar[d]_{i_0} & X^0\times \Susp^\sigma(\GG)_+ \ar[d] \\
        X\times S^\sigma \ar[r] & Z.
    }
\]
We then have a Mayer-Vietoris long exact sequence
\[
    \cdots \to \HHR^{\alpha+\sigma}(Z) \to
    \HH^{\alpha}(X_+)\dirsum \HHR^{\alpha}(X^0_+\smsh \GG_+)
    \xrightarrow{\eta_0 - \tau}
    \HHR^{\alpha}(X^0_+) \to \cdots
\]
By assumption, $\eta_0(x) - \tau(y) = 0$, so there exists a $z\in \HHR^{\alpha+\sigma}(Z)$ mapping
to $(x,y)$; in particular, $z$ maps to $x$.

From the construction of $Z$, the set $Z^0 \subseteq Z^\GG$ over $X^0$ is contractible to the base section,
hence Lemma~\ref{lem:first divisibility}
implies that every element in $\HHR^\gr(Z)$ is infinitely divisible by $\zeta_0$. 
Since $x$ is the image of $z$, it follows that
$x$ is infinitely divisible by $\zeta_0$.
\end{proof}

Of course, by renaming, this gives a similar result with $\eta_0$ replaced by $\eta_1$, restriction to $X^1$,
and $\zeta_0$ replaced by $\zeta_1$.

\section{The excess intersection formula}\label{sec:excess}

There is another useful but technical result we need,
similar to the excess intersection formula given by Fulton as \cite[Theorem~6.3]{FultonIntersection}.

Suppose that $M$ is a closed $\GG$-manifold with closed $\GG$-submanifolds $N$ and $Z$,
with $Z$ representing a cohomology element in $\HH^\gr(M)$ (so $\dim M - \dim Z \in RO(\Pi M)$).
We assume that $Z' = N\intersect Z$ is a closed manifold, but not that $N$ and $Z$ intersect transversely.
Instead, suppose that there is a bundle $\epsilon$ over $N$, called the
\emph{excess normal bundle,} such that
\[
    \nu(Z', M) = \nu(Z',N) \dirsum \nu(Z',Z) \dirsum (\epsilon|Z').
\]
If the intersection were transverse, we would have $\epsilon = 0$.
Let $i\colon N\to M$ be the inclusion.

\begin{proposition}[Excess Intersection Formula]\label{prop:excess formula}
In the context given just above,
\[
    i^*[Z]^* = e(\epsilon)[Z']^* \in \HH^\gr(N)
\]
where $e(\epsilon) \in \HH^\gr(N)$ is the Euler class of $\epsilon$.
\end{proposition}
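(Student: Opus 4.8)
The plan is to reduce the statement to the standard (non-excess) self-intersection situation by deforming to the normal cone, as in Fulton's treatment of the excess intersection formula. First I would recall that the cohomology class $[Z]^* \in \HH^\gr(M)$ is represented, via the Thom–Pontryagin construction for the equivariant oriented submanifold $Z$, by the composite $M_+ \to M/(M \smallsetminus \nu(Z,M)) \simeq \Susp^{\nu(Z,M)} Z_+ \to \Susp^{\dim M - \dim Z} \mathbb{S}$, where the first map collapses the complement of a tubular neighborhood. The key point is then that pulling this construction back along $i\colon N \to M$ computes $i^*[Z]^*$ in terms of the \emph{actual} normal data of $Z' = N \intersect Z$ inside $N$, which by hypothesis differs from $\nu(Z',N)$ by the excess bundle $\epsilon|Z'$.

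The main steps, in order, would be: (1) choose a tubular neighborhood of $Z$ in $M$ and intersect it with $N$ to get a neighborhood of $Z'$ in $N$; the hypothesis $\nu(Z',M) = \nu(Z',N)\dirsum\nu(Z',Z)\dirsum(\epsilon|Z')$ identifies $i^*\nu(Z,M)|_{Z'}$ with $\nu(Z',N)\dirsum(\epsilon|Z')$. (2) Under this identification, the restriction to $N_+$ of the collapse map for $Z$ factors, up to $\GG$-homotopy, through the collapse map $N_+ \to \Susp^{\nu(Z',N)} Z'_+$ for $Z'$ in $N$, followed by the map $Z'_+ \to \Susp^{\epsilon|Z'}\mathbb{S}$ classifying $\epsilon|Z'$ together with the zero section; composing with the orientation map of $\nu(Z,M)$ over $Z$ (restricted to $Z'$) then exhibits $i^*[Z]^*$ as the image of $[Z']^* \in \HH^\gr(N)$ under multiplication by the class of the composite $Z'_+ \to \Susp^{\epsilon|Z'}\mathbb S \to \Susp^{\dim\epsilon}\mathbb S$, which is exactly $e(\epsilon)|_{Z'}$ pushed forward, i.e. $e(\epsilon)[Z']^*$. (3) Assemble these into the displayed identity $i^*[Z]^* = e(\epsilon)[Z']^*$ in $\HH^\gr(N)$, noting that the grading bookkeeping works because $\dim N - \dim Z' = (\dim M - \dim Z) - \dim\epsilon + \dim\epsilon$ lands in $RO(\Pi N)$ consistently with the stated hypotheses.

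I expect the main obstacle to be carrying out step (2) equivariantly and with the extended $RO(\Pi B)$-grading: one must be careful that the tubular neighborhood theorem, the splitting of $\nu(Z,M)|_{Z'}$, and the homotopy between the two collapse maps can all be chosen $\GG$-equivariantly and compatibly with the chosen orientations (Thom classes) of the various bundles, so that the equalities of cohomology classes — rather than merely of their underlying nonequivariant or ungraded versions — are legitimate. A secondary technical point is making precise the sense in which the collapse map for $Z$, restricted to $N$, "factors through" the collapse map for $Z'$: this is a deformation-to-the-normal-cone argument, and the cleanest route is probably to work inside a tubular neighborhood of $N$ in $M$ from the start, or to invoke the equivariant version of Fulton's construction directly, citing \cite{FultonIntersection} for the geometric input and supplying only the equivariant/graded refinements. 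The relations among Euler classes and Thom classes needed here are formal once the geometry is set up, so the real work is in the equivariant transversality-replacement bookkeeping.
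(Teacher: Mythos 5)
Your proposal is correct and follows essentially the same route as the paper: represent $[Z]^*$ by the collapse map onto the Thom space of $\nu(Z,M)$ followed by the Thom class, observe that the restriction of this collapse map to $N_+$ factors (up to $\GG$-homotopy) through the collapse map for $Z'$ in $N$ followed by the zero-section inclusion into $T(\nu(Z',N)\dirsum\epsilon)$ via the isomorphism $\nu(Z,M)|Z' \iso \nu(Z',N)\dirsum\epsilon$, and conclude using that $e(\epsilon)$ is represented by the zero section composed with the Thom class. The paper packages this as a single homotopy-commutative square of maps of (parametrized) Thom spaces rather than invoking deformation to the normal cone, but the geometric content is identical.
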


\begin{proof}
The class $[Z]^*$ is represented by the map over $M$ given by
\[
    M_+ \xrightarrow{c} T\nu(Z,M) \xrightarrow{t} H^{\nu(Z,M)}
\]
where $c$ is the collapse map, $t$ is the Thom class, and
$H^\gamma$ is the parametrized spectrum over $M$ representing cohomology in gradings $\gamma + RO(\GG)$.
We get $i^*[Z]^*$ by precomposing with $i$.

We then have the following diagram, commutative up to homotopy:
\[
    \xymatrix{
        N_+ \ar[r]^i \ar[d]_c & M_+ \ar[dd]^c \\
        T\nu(Z',N) \ar[d]_{\dirsum 0} \\
        T(\nu(Z',N)\dirsum\epsilon) \ar[r] & T\nu(Z,M)
    }
\]
where the map on the bottom is induced by the isomorphism
\[
    \nu(Z,M) | Z' \iso \nu(Z',N)\dirsum\epsilon
\]
that follows from our assumptions.
The proposition follows from the fact that $e(\epsilon)$ is represented by the composite
$N_+\to T\epsilon\to H^\epsilon$ where the first map is the zero section.
\end{proof}

Our main application of this formula is Lemma~\ref{lem:bb restriction}.

\section{Quadrics of type (B,B)}

We look first at quadrics of type (B,B) because they will serve as a basis for
calculating the cohomologies of the other types.
Our calculation in each case will use a cofibration whose cofiber
is a suspension of $\Xpq pq$, which we include in $\Qexpq mn$ 
(where $m = 2p$ or $2p+1$ and $n = 2q$ or $2q+1$)
via the map
\begin{equation}\label{def:j}
    j\colon \Xpq pq \to \Qexpq mn, \qquad j[\vec u:\vec v] = [\vec 0_{m-p}:\vec u:\vec 0_{n-q}:\vec v]
\end{equation}
(where $\vec 0_k$ indicates $k$ zeros), which has normal bundle
\[
    \nu \iso (m-p)\omega \dirsum (n-q)\chi\omega - O(2).
\]

In the (B,B) case, our cofibration is the inclusion of
the following subspace that we called
a \emph{binate Schubert variety} in
\cite{CH:geometric}.

\begin{definition}
Let $\Spqk pq{p+q+1} \subset \Qpq{2p+1}{2q+1}$ be the subspace defined by
\[
    \Spqk pq{p+q+1} = \{ [\vec x:z:\vec 0:\vec y:w:\vec 0] \in \Qpq{2p+1}{2q+1} \}.
\]
\end{definition}

Note that the defining equation for $\Qpq{2p+1}{2q+1}$ becomes $z^2 + w^2 = 0$ on
$\Spqk pq{p+q+1}$, hence $w = \pm iz$. So we can write
\[
    \Spqk pq{p+q+1} = P \union tP
\]
where
\[
    P = \{ [\vec x:z:\vec 0:\vec y:iz:\vec 0] \in \Qpq{2p+1}{2q+1} \} \homeo \Xp{p+q+1}
\]
is a nonequivariant projective space, and
\[
    P\intersect tP = \Xpq pq = \{ [\vec x:0:\vec 0:\vec y:0:\vec 0] \in \Qpq{2p+1}{2q+1} \}.
\]

Let $i\colon \Spqk pq{p+q+1}\to \Qpq{2p+1}{2q+1}$ be the inclusion and recall $j$ from (\ref{def:j}).

\begin{proposition}\label{prop:bb cofibration}
We have a cofibration
\[
    (\Spqk pq{p+q+1})_+ \xrightarrow{i} (\Qpq{2p+1}{2q+1})_+ \to \susp^\nu j(\Xpq pq)_+
\]
where $\nu \iso (p+1)\omega\dual\dirsum(q+1)\chi\omega\dual - O(2)$ is the normal bundle to $j$.
\end{proposition}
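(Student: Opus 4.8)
The plan is to identify the cofiber of $i$ with a Thom space and then recognize that Thom space as $\susp^\nu j(\Xpq pq)_+$. The cofiber of the inclusion $i\colon (\Spqk pq{p+q+1})_+ \to (\Qpq{2p+1}{2q+1})_+$ is, by definition, the quotient $(\Qpq{2p+1}{2q+1})/(\Spqk pq{p+q+1})$. First I would check that $\Spqk pq{p+q+1}$ is a closed $\GG$-submanifold of the smooth closed $\GG$-manifold $\Qpq{2p+1}{2q+1}$ whose complement deformation retracts onto a tubular neighborhood of the image $j(\Xpq pq)$; concretely, the open complement of $\Spqk pq{p+q+1}$ is the locus where at least one of the "paired" coordinates $u_i$ or $v_j$ is nonzero, and the standard scaling flow $[\vec x:z:\vec u:\vec y:w:\vec v]\mapsto [t\vec x:tz:\vec u:t\vec y:tw:\vec v]$ as $t\to 0$ retracts this onto the subvariety $\vec x = z = \vec y = w = 0$, which is exactly $j(\Xpq pq)$ (one must confirm the defining quadratic equation $\sum x_iu_i + z^2 + \sum y_jv_j + w^2 = 0$ is preserved by this flow — it is, since each monomial scales by $t$ — and that the flow is $\GG$-equivariant, which is clear since it only rescales a $\GG$-invariant block of coordinates). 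By excision/the tubular neighborhood theorem in the $\GG$-equivariant setting, the cofiber is then the Thom space $T\nu$ of the normal bundle $\nu$ of $j$, i.e. $\susp^\nu j(\Xpq pq)_+$.

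Next I would verify the stated formula for $\nu$. The normal bundle to $j\colon \Xpq pq \to \Xpq{(2p+1)}{(2q+1)}$ is $(p+1)\omega\dual \dirsum (q+1)\chi\omega\dual$ — the standard computation for a linear $\PP(V)\subset\PP(V\dirsum W)$ gives normal bundle $\Hom(\gamma, W) = W\tensor\gamma\dual$, and here the complement $W = \Cpq{(p+1)}{(q+1)}$ contributes $(p+1)$ untwisted dual-tautological summands and $(q+1)$ twisted ones. Then $\Qpq{2p+1}{2q+1}$ sits inside $\Xpq{(2p+1)}{(2q+1)}$ as a smooth quadric hypersurface, with normal bundle $O(2)$ restricted to it; since $j(\Xpq pq)$ lies inside $\Qpq{2p+1}{2q+1}$ (the image coordinates satisfy the quadratic equation trivially), the normal bundle of $j$ as a submanifold of the quadric is the normal bundle inside the ambient projective space modulo the normal direction of the quadric, giving $\nu \iso (p+1)\omega\dual \dirsum (q+1)\chi\omega\dual - O(2)$ in $RO(\Pi \Xpq pq)$. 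This matches the general formula $\nu \iso (m-p)\omega\dual\dirsum(n-q)\chi\omega\dual - O(2)$ quoted after \eqref{def:j} with $m = 2p+1$, $n = 2q+1$.

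I expect the main obstacle to be the equivariant bookkeeping in the retraction step: one must produce a genuine $\GG$-equivariant deformation retraction of the open complement onto (a neighborhood of) $j(\Xpq pq)$, being careful that the scaling flow is well-defined on projective space (it is, as long as not all coordinates are being killed — which holds precisely on the complement of $\Spqk pq{p+q+1}$) and that it respects both the quadric equation and the $\GG$-action, and then to invoke the equivariant tubular neighborhood theorem to identify the resulting quotient with the Thom space. The identification of $\nu$ with the claimed virtual bundle, and in particular keeping track of which tautological summands are twisted by $\chi$, is routine but must be done with care about the ordering of the homogeneous coordinates in the definition of $\Qpq{2p+1}{2q+1}$. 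Everything else — that $i$ is a cofibration, that the cofiber sequence is as stated — then follows formally.
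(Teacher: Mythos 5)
Your overall strategy matches the paper's in outline --- identify the cofiber $\Qpq{2p+1}{2q+1}/\Spqk pq{p+q+1}$ with the Thom space of $\nu$ by analyzing the complement of $\Spqk pq{p+q+1}$ --- and your computation of $\nu$ (the $\Hom(\gamma,W)$ description of the normal bundle in the ambient projective space, plus the $-O(2)$ correction for sitting inside the quadric hypersurface) is correct. But the central computational step is wrong: the scaling flow $[\vec x:z:\vec u:\vec y:w:\vec v]\mapsto[t\vec x:tz:\vec u:t\vec y:tw:\vec v]$ does \emph{not} preserve the quadric. The monomials $x_iu_i$ and $y_jv_j$ scale by $t$, but $z^2$ and $w^2$ scale by $t^2$; on a point of the quadric with $z^2+w^2\neq 0$ (for instance $u_1=1$, $x_1=-1$, $z=1$, all other coordinates zero) the defining polynomial evaluates at the scaled point to $t(t-1)(z^2+w^2)\neq 0$ for $0<t<1$, so the flow leaves the quadric. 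The repair is the weighted scaling $[t^2\vec x:tz:\vec u:t^2\vec y:tw:\vec v]$, under which every monomial scales by $t^2$; this is still well defined on the complement of $\Spqk pq{p+q+1}$ (where some $u_i$ or $v_j$ is nonzero), still $\GG$-equivariant, and still converges to $j(\Xpq pq)$ as $t\to 0$.

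The paper's own proof is shorter and sidesteps the retraction-plus-tubular-neighborhood step: since $\Qpq{2p+1}{2q+1}$ is compact and $\Spqk pq{p+q+1}$ is closed, the cofiber is the one-point compactification of the complement, and the projection $[\vec x:z:\vec u:\vec y:w:\vec v]\mapsto[\vec 0:0:\vec u:\vec 0:0:\vec v]$ exhibits that complement as the total space of the normal bundle over $j(\Xpq pq)$: the fiber over $[\vec u:\vec v]$ is the affine quadric $\sum_i x_iu_i+z^2+\sum_j y_jv_j+w^2=0$, which is a graph over a hyperplane in the $(\vec x,z,\vec y,w)$-coordinates because $(\vec u,\vec v)\neq 0$, hence a copy of $\C^{p+q+1}$. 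The one-point compactification of the total space of $\nu$ is $\susp^\nu j(\Xpq pq)_+$, which finishes the argument. If you keep your route, note that a deformation retraction of the open complement onto $j(\Xpq pq)$ does not by itself identify the one-point compactifications; you would still need the tubular-neighborhood/excision comparison you allude to, spelled out, to conclude that the quotient is the Thom space.
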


\begin{proof}
We have a map
\[
    \Qpq{2p+1}{2q+1} \setminus \Spqk pq{p+q+1} \to j(\Xpq pq)
\]
given by
\[
    [\vec x:z:\vec u:\vec y:w:\vec v] \mapsto [\vec 0:0:\vec u:\vec 0:0:\vec v],
\]
which we can identify with the normal bundle to $j$.
\end{proof}

This gives us a long exact sequence in cohomology. In order to use this
sequence for calculations, we need to know the cohomology of $\Spqk pq{p+q+1}$.

\begin{proposition}\label{prop:binate cohomology}
There is a split short exact sequence
\begin{multline*}
    0 \to \HH^\gr(\Spqk pq{p+q+1})
    \to \HH^\gr(\Xpq pq)\dirsum\HH^\gr(\GG\times\Xp{p+q+1}) \\
    \to \HH^\gr(\GG\times\Xpq pq)
    \to 0
\end{multline*}
where the last map is the difference of the two restriction maps.
As a result,
\[
    \HH^\gr(\Spqk pq{p+q+1}) \iso \HH^\gr(\Xpq pq) \dirsum \susp^{2(p+q)}\HS_{\GG/e}[\zeta_0^{-1}]
\]
as a module over $\HS$. 
Multiplicatively, if we use the usual names for elements of $\HH^\gr(\Xpq pq)$
and write $y \in \HH^{2(p+q)}(\Spqk pq{p+q+1})(\GG/e)$ for a generator of the
summand $\susp^{2(p+q)}\HS_{\GG/e}$, the multiplication is determined by
the relations in the cohomology of $\Xpq pq$, except that we replace the relation
$\cwd[p]\cxwd[q] = 0$ with
\[
    \cwd[p]\cxwd[q] = \zeta_0^q\zeta_1^p \tau(y ).
\]
\end{proposition}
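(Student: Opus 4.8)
The plan is to analyze the space $\Spqk pq{p+q+1} = P \cup tP$ using the Mayer--Vietoris cofibration associated to this union, where $P \homeo \Xp{p+q+1}$, $tP$ is its $\GG$-translate, and $P \intersect tP = \Xpq pq$. The key observation is that $\GG$ acts freely away from the intersection, swapping $P$ and $tP$; so the pushout square expressing $\Spqk pq{p+q+1}$ as $P \cup_{P\cap tP} tP$ can be rewritten $\GG$-equivariantly as a pushout of $\GG\times\Xp{p+q+1} \leftarrow \GG\times\Xpq pq \to \Xpq pq$ (the left map being the fold map composed with inclusion). This pushout square is a homotopy pushout, and applying $\HH^\gr(-)$ gives the Mayer--Vietoris long exact sequence. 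First I would establish that the connecting maps vanish, producing the claimed split short exact sequence: the restriction map $\HH^\gr(\Spqk pq{p+q+1}) \to \HH^\gr(\Xpq pq)$ is split by the ring map induced from the retraction-like structure (or, more simply, the sequence splits because $\HH^\gr(\GG\times\Xp{p+q+1}) \to \HH^\gr(\GG\times\Xpq pq)$ is already surjective, forcing the connecting map to zero). Surjectivity here is the nonequivariant statement that $H^*(\Xp{p+q+1}) \to H^*(\Xpq pq \text{ as a space}) = H^*(\CP^{p+q})$ is onto, which is classical since it is the restriction $\Z[c]/(c^{p+q+2}) \onto \Z[c]/(c^{p+q+1})$.

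From the short exact sequence, the module structure follows by identifying the kernel of the difference-of-restrictions map. Since $\HH^\gr(\GG\times Y) \iso \HH^\gr(Y_e)[\zeta_0^{-1}]$-type formulas hold (cohomology of a free orbit times $Y$ is the nonequivariant cohomology of $Y$ with an invertible $\iota$, i.e. a copy of $\HS_{\GG/e}$-modules), the map $\HH^\gr(\GG\times\Xp{p+q+1}) \dirsum \HH^\gr(\Xpq pq) \to \HH^\gr(\GG\times\Xpq pq)$ is, on the free-orbit part, the surjection $H^*(\CP^{p+q+1}) \to H^*(\CP^{p+q})$ tensored with $\HS_{\GG/e}$-coefficients; its kernel is the top class, a free rank-one module generated in nonequivariant degree $2(p+q)$, which as an $\HS$-module is $\susp^{2(p+q)}\HS_{\GG/e}[\zeta_0^{-1}]$ (the $\zeta_0$-inversion recording that this summand is supported on the free part $P \setminus tP$, i.e. on $X^0$ where $\zeta_0$ restricts to a unit; one invokes the discussion preceding Lemma~\ref{lem:first divisibility} or a direct computation). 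Meanwhile the $\HH^\gr(\Xpq pq)$ summand injects and is complementary. This gives the module-level splitting.

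For the multiplicative statement, I would argue as follows. The inclusion $j\colon \Xpq pq \includesin \Spqk pq{p+q+1}$ (the common intersection $P \cap tP$) induces a ring map $\HH^\gr(\Spqk pq{p+q+1}) \to \HH^\gr(\Xpq pq)$, and one checks it carries the subring identified above isomorphically; thus all the defining relations of $\HH^\gr(\Xpq pq)$ lift, with the sole exception of $\cwd[p]\cxwd[q] = 0$. The point is that in $\HH^\gr(\Xpq pq)$ the product $\cwd[p]\cxwd[q]$ vanishes, but in $\HH^\gr(\Spqk pq{p+q+1})$ it need not, and by degree reasons (the product lives in grading $(p+1)\omega + (q+1)\chiw$, wait—rather in the grading of $\cwd[p]\cxwd[q]$, which after accounting for the $\omega$-degrees is the grading where the new free summand's transfer sits) the only available target is a multiple of $\tau(y)$. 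To pin down the coefficient as $\zeta_0^q\zeta_1^p$, I would restrict to the fixed sets and to the free part: restricting to $P \homeo \Xp{p+q+1}$ kills $y$ (since $P \subseteq X^1$, the factor $\zeta_0^q$ restricts nontrivially but... actually $P$ is the component where... ) — more robustly, restrict along $\eta_0$ to $X^0 \subseteq (\Spqk pq{p+q+1})^\GG$, where $\zeta_1$ is invertible and one can compare with the known product in $\HH^\gr(\Xpq pq)$ after multiplying through; matching the normalization of $y$ (defined so that $\tau(y)$ is the image of the generator of the kernel) forces the coefficient to be exactly $\zeta_0^q\zeta_1^p$, using $\zeta_0\zeta_1 = \xi$ to reconcile with the $\xi$-adic behavior of $\cwd[p]\cxwd[q]$ in cohomology of a point. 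The main obstacle I anticipate is precisely this last bookkeeping: getting the power $\zeta_0^q\zeta_1^p$ (rather than some other monomial in $\zeta_0,\zeta_1$ of the same total $\xi$-degree) correct requires carefully tracking how the $\omega$- and $\chiw$-gradings of $\cwd$ and $\cxwd$ interact with the $\zeta_0/\zeta_1$ splitting on the two fixed components $\Xp{p+q+1}$-side versus the free locus, and verifying consistency of the chosen generator $y$ across the localization $[\zeta_0^{-1}]$.
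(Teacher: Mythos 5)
Your additive argument is essentially the paper's: the same pushout of $\Xpq pq \from \GG\times\Xpq pq \to \GG\times\Xp{p+q+1}$ presenting $\Spqk pq{p+q+1}$, the same Mayer--Vietoris sequence, and the same observation that the map to $\HH^\gr(\GG\times\Xpq pq)$ is surjective because $H^*(\CP^{p+q})\onto H^*(\CP^{p+q-1})$ is (note $\Xpq pq$ is $\CP^{p+q-1}$, not $\CP^{p+q}$, so your indexing is off by one, though your conclusion that the kernel is generated in degree $2(p+q)$ is the right one). The missing word for the splitting is projectivity: the target $\HH^\gr(\GG\times\Xpq pq)$ is a projective $\HS$-module, which is what splits the resulting short exact sequence. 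Also, the inversion of $\zeta_0$ on the new summand is simply because $\xi$, hence $\zeta_0$, acts invertibly on the cohomology of any free $\GG$-space such as $\GG\times\Xp{p+q+1}$; it has nothing to do with $X^0$ or $X^1$, and you repeatedly conflate the decomposition $P\union tP$ (two nonequivariant pieces interchanged by $\GG$) with the fixed-set decomposition $X^\GG = X^0\disjunion X^1$ (both components of which lie in $P\intersect tP = \Xpq pq$).

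The multiplicative step is where there is a genuine gap. First, the claim that ``by degree reasons the only available target is a multiple of $\tau(y)$'' is not justified: the grading $p\omega+q\chi\omega$ also contains classes of the form $e^{-2k}\kappa$ times monomials $\cwd[i]\cxwd[j]$ with $i<p$ or $j<q$ coming from the $\HH^\gr(\Xpq pq)$ summand (compare Lemma~\ref{lem:divp divq}, where the analogous grading on the quadric supports three independent elements). Second, your fallback for pinning down the coefficient --- restricting along $\eta_0$ to $X^0$ --- gives no information: $\eta_0(\cwd[p]) = \zeta_1^p c^p = 0$ because $c^p = 0$ on $\Xp p\homeo\CP^{p-1}$, and $\eta_0(\tau(y)) = 0$ because $\tau(y)$ is represented by a free orbit disjoint from the fixed set, so both sides of the proposed relation vanish there. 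The short correct route, which is the paper's, is to use the injection you already constructed: the Mayer--Vietoris map embeds $\HH^\gr(\Spqk pq{p+q+1})$ into $\HH^\gr(\Xpq pq)\dirsum\HH^\gr(\GG\times\Xp{p+q+1})$, and under it $\cwd\mapsto(\cwd,\cd)$ and $\cxwd\mapsto(\cxwd,\cd)$, so that $\cwd[p]\cxwd[q]\mapsto(0,\cd[p+q])$, which is precisely the image of $\zeta_0^q\zeta_1^p\tau(y)$. No degree count or fixed-point comparison is needed, and the monomial $\zeta_0^q\zeta_1^p$ is forced purely by matching gradings between $p\omega+q\chi\omega$ and $2(p+q)$.
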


\begin{proof}
We have the following pushout diagram:
\[
    \xymatrix{
        \GG\times\Xpq pq \ar[r]\ar[d] & \GG\times\Xp{p+q+1} \ar[d] \\
        \Xpq pq \ar[r] & \Spqk pq{p+q+1}.
    }
\]
Since the top map is a $\GG$-cofibration, we get a Mayer-Vietoris exact sequence
\begin{multline*}
    \cdots \to \HH^\gr(\Spqk pq{p+q+1})
    \to \HH^\gr(\Xpq pq)\dirsum\HH^\gr(\GG\times\Xp{p+q+1}) \\
    \to \HH^\gr(\GG\times\Xpq pq)
    \to \cdots.
\end{multline*}
The map to $\HH^\gr(\GG\times\Xpq pq)$ is clearly surjective, and the target
is a projective $\HS$-module, so we get the split short exact sequence claimed.
This gives the additive structure
(using the fact that $\xi$, hence $\zeta_0$, acts via isomorphisms on $\HH^\gr(\GG\times Y)$
for any nonequivariant space $Y$ over $BU(1)$).

For the multiplicative structure, note that $\cwd\in \HH^\omega(\Spqk pq{p+q+1})$
maps to the pair $(\cwd,\cd) \in \HH^\omega(\Xpq pq)\dirsum\HH^\omega(\GG\times\Xp{p+q+1})$,
and similarly for $\cxwd$. This gives
\[
    \cwd[p]\cxwd[q] \mapsto (\cwd[p]\cxwd[q], \cd[p+q]) = (0, \cd[p+q]) = \zeta_0^q\zeta_1^p\tau(y )
\]
as claimed.
\end{proof}

\begin{remark}\label{rem:binate representation}
Although $\Spqk pq{p+q+1}$ is not a manifold, the preceding proposition allows
us to represent elements in its cohomology using (singular) manifolds
by embedding the cohomology in $\HH^\gr(\Xpq pq)\dirsum\HH^\gr(\GG\times\Xp{p+q+1})$.
For example, we can write
\begin{align*}
    \cwd &= ( [\Xpq{p-1}q]^*, [\GG\times\Xp{p+q}]^*) \\
    y  &= ([\emptyset]^*, [*]^*) \quad\text{as element at level $\GG/e$} \\
    \tau(y ) &= ( [\emptyset]^*, [\GG/e]^* )
\end{align*}
where $*$ represents any point in $\Xp{p+q+1}$ and $\GG/e$ denotes its orbit in $\GG\times\Xp{p+q+1}$.
If we identify $\Xp{p+q+1}$ with the subspace $P\subset \Spqk pq{p+q+1}$ used earlier,
a good choice is $* = [\vec 0:1:\vec 0:\vec 0:i:\vec 0]$ and its orbit 
$\GG/e = \{ [\vec 0:1:\vec 0:\vec 0:\pm i:\vec 0] \}$.
\end{remark}

We now turn to the cohomology of the quadric itself. We give names to the following elements.

\begin{definition}
Let 
\[
    x_{p,q} = j_!(1) = [j(\Xpq pq)]^* \in \HH^{(p+1)\omega + (q+1)\chi\omega - 2}(\Qpq{2p+1}{2q+1}).
\]
where $j$ is as in (\ref{def:j}).
Let
\[
    y  = [P']^* \in H^{2(p+q)}(\Qpq{2p+1}{2q+1}) = \HH^{2(p+q)}(\Qpq{2p+1}{2q+1})(\GG/e)
\]
be the nonequivariant cohomology element dual to
\[
    P' \homeo \Xp{p+q+1} = \{ [\vec 0:z:\vec u:\vec 0:iz:\vec v] \in \Qpq{2p+1}{2q+1} \}
\]
\end{definition}

We will shortly need the following calculation. Let
\[
    \eta\colon \HH^\gr(\Qpq{2p+1}{2q+1}) \to \HH^\gr(\Qpq{2p+1}{2q+1}^\GG)
        \iso \HH^\gr(\Qexp{2p+1})\dirsum\HH^\gr(\Qexq{(2q+1)})
\]
be the map induced by the inclusion of the fixed points.
We call $\eta$ \emph{restriction to fixed points,} which should be distinguished
from the fixed-point map $(-)^\GG$.
Notice that $\zeta_1$ is invertible in $\HH^\gr(\Qexp{2p+1})$ while
$\zeta_0$ is invertible in $\HH^\gr(\Qexq{(2q+1)})$,
because each sits over just one of the components of $\Qpq{2p+1}{2q+1}^\GG$.
Because these spaces have trivial $\GG$-action and free nonequivariant cohomology, we have
\begin{align*}
    \HH^\gr(\Qexp{2p+1}) &\iso \HS[\zeta_0^{\pm 1},c,y]/\rels{c^p - 2y, y^2} && \grad c = 2,\ \grad y = 2p\\
\intertext{and}
    \HH^\gr(\Qexq{(2q+1)}) &\iso \HS[\zeta_1^{\pm 1},c,y]/\rels{c^q - 2y, y^2} && \grad c = 2,\ \grad y = 2q.
\end{align*}

\begin{lemma}\label{lem:bb restriction}
$\eta(x_{p,q}) = ( (e^2 + \xi c)^{q+1}\zeta_1^{p-q}y, (e^2 + \xi c)^{p+1}\zeta_0^{q-p}y )$.
\end{lemma}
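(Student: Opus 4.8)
The plan is to compute $\eta(x_{p,q})$ one fixed component at a time, using the fact that $x_{p,q} = j_!(1) = [j(\Xpq pq)]^*$ is represented geometrically by the submanifold $j(\Xpq pq)\subset \Qpq{2p+1}{2q+1}$, and then to identify the restriction of a dual class to a fixed component as a localized intersection-product computation via the Excess Intersection Formula, Proposition~\ref{prop:excess formula}. Concretely, write $F^0 = \Qexp{2p+1}$ and $F^1 = \Qexq{(2q+1)}$ for the two fixed components, sitting inside $\Qpq{2p+1}{2q+1}$ as the loci where all the $\chi\omega$-coordinates, respectively all the $\omega$-coordinates, vanish. For the first component, I would take $M = \Qpq{2p+1}{2q+1}$, $N = F^0$, $Z = j(\Xpq pq)$, and compute $Z' = N\cap Z$ together with the excess normal bundle $\epsilon$ over $N$; Proposition~\ref{prop:excess formula} then gives $\eta^0(x_{p,q}) = i^*[Z]^* = e(\epsilon)\,[Z']^*$ in $\HH^\gr(F^0)$, and symmetrically for $F^1$.

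The key steps, in order, are: (1) Identify $Z' = j(\Xpq pq)\cap F^0$. Since $j[\vec u:\vec v] = [\vec 0:\vec u:\vec 0:\vec v]$ and $F^0$ is cut out by $\vec y = w = \vec v = 0$, the intersection is $j$ applied to the subspace of $\Xpq pq$ where $\vec v = 0$, i.e.\ a copy of $\Xp p \subset \Xp{2p+1} \approx F^0$ — so $[Z']^*$ is a power of $c$ in $\HH^\gr(F^0)$, namely $c^{q+1}$ times a unit in $\zeta$, matching the codimension bookkeeping. (2) Compute the three normal bundles in $\nu(Z',M) = \nu(Z',N)\dirsum\nu(Z',Z)\dirsum(\epsilon|Z')$. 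Here $\nu(Z',Z)$ is the restriction to $Z'$ of the $\chi\omega$-part of $\nu(j)$, hence $(q+1)\chi\omega\dual$-ish, $\nu(Z',N)$ is the normal bundle of $\Xp p$ in $\Xp{2p+1}$ (a sum of copies of the tautological-type line bundle over the projective fixed component), and what is left over, the excess bundle $\epsilon$, is a sum of $q+1$ copies of the appropriate line bundle over $N$ twisted by the nontrivial representation — this is exactly the place where the factor $(e^2 + \xi c)^{q+1}$ comes from, since the Euler class of the relevant equivariant line bundle over the projective fixed component is $e^2 + \xi c$ (the standard formula for $\HH^\gr(\Xp{\,\cdot})$ recalled just before the lemma, interpreted over $F^0$ where $\zeta_1$ is invertible). (3) Assemble: $e(\epsilon) = (e^2+\xi c)^{q+1}$ up to a unit power of $\zeta$, and multiply by $[Z']^*$; rewriting $c^{q+1}$ in terms of $y$ via the relation $c^p - 2y$ (or more directly, noting that $[Z']^*$ already is the class I want up to the $\zeta$-power and a comparison with $y$), and tracking the $\zeta_1$-powers coming from the discrepancy between the ambient $RO(\Pi BU(1))$-grading and the grading on $F^0$, yields $(e^2+\xi c)^{q+1}\zeta_1^{p-q}y$. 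Step (4) is the mirror-image computation over $F^1$, swapping the roles of $\omega\leftrightarrow\chi\omega$, $p\leftrightarrow q$, $\zeta_0\leftrightarrow\zeta_1$, giving the second coordinate.

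I expect the main obstacle to be Step (2)–(3): correctly pinning down the excess normal bundle as an \emph{equivariant} bundle — in particular which representation twist each summand carries and hence that its equivariant Euler class is precisely $e^2 + \xi c$ rather than, say, $c$ or $e^2$ alone — and then bookkeeping the powers of $\zeta_0,\zeta_1$ forced by the fact that the ambient cohomology is graded on $RO(\Pi BU(1))$ while $\HH^\gr(F^0)$ and $\HH^\gr(F^1)$ are only $\zeta$-localized versions of that. The geometry (which subvarieties intersect in what) is straightforward; the representation-theoretic decoration of the normal data, and matching it against the stated answer, is where the care is needed. A useful consistency check at the end is to apply the further restriction $\rho$ to $\GG/e$ and compare with the nonequivariant statement $\rho(x_{p,q}) = \iota^{2(q+1)}\zeta^{p-q}cy$ from the theorem in the introduction.
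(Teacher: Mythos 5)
Your overall strategy is exactly the paper's: compute $\eta$ one fixed component at a time, represent $x_{p,q}$ by $j(\Xpq pq)$, and apply Proposition~\ref{prop:excess formula} with $M = \Qpq{2p+1}{2q+1}$, $N = \Qexp{2p+1}$, $Z = j(\Xpq pq)$, $Z' = \Xp p$, and excess bundle $\epsilon = (q+1)\chi\omega\dual$, whose Euler class is $(e^2+\xi c)^{q+1}\zeta_1^{-(q+1)}$ by the formula $e(\chi\omega\dual) = (e^2+\xi c)\zeta_1^{-1}$. That is precisely the argument given in the paper, down to the mirror-image computation for the second component.

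There is, however, one concrete error in your steps (1) and (3) that would derail the computation as primarily written: the identification of $[Z']^*$. First, $F^0$ is the quadric $\Qexp{2p+1}$, not all of $\Xp{2p+1}$. Second, $Z' = \Xp p$ is a $\PP^{p-1}$, a maximal linear subspace of the odd quadric $\Qexp{2p+1}$, of complex codimension $p$ --- not $q+1$ --- and its dual class is the distinguished degree-$2p$ generator $y$ of Proposition~\ref{prop:nonequivariant}, up to the invertible grading factor: $[\Xp p]^* = \zeta_1^{p+1}y$. It is not a power of $c$. Your primary route (``$[Z']^*$ is $c^{q+1}$ times a unit, then rewrite via $c^p - 2y$'') fails on degree grounds and would also introduce a spurious factor of $2$ that does not appear in the stated answer. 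Your parenthetical alternative (``$[Z']^*$ already is the class I want up to the $\zeta$-power'') is the correct resolution and is what the paper does: the entire factor $(e^2+\xi c)^{q+1}$ comes from $e(\epsilon)$, the $y$ comes from $[Z']^*$, and $\zeta_1^{p-q}$ is the product of $\zeta_1^{-(q+1)}$ from $e(\epsilon)$ with $\zeta_1^{p+1}$ from $[Z']^*$. With that correction your argument goes through and coincides with the paper's proof.
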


\begin{proof}
The first component $\eta_0(x_{p,q})$ is the image of $x_{p,q} = [\Xpq pq]^*$ under the restriction map induced by
$\Qexp{2p+1} \includesin \Qexpq{2p+1}{(2q+1)}$.
Using the terminology of \S\ref{sec:excess}, the intersection
$\Xpq pq \intersect \Qexp{2p+1} = \Xp p$ is not transverse, but has excess normal bundle
$(q+1)\chi\omega\dual$. From Proposition~\ref{prop:excess formula}, we then get
\[
    \eta_0(x_{p,q}) = e((q+1)\chi\omega\dual)[\Xp p]^* = (e^2 + \xi c)^{q+1}\zeta_1^{-(q+1)}\cdot\zeta_1^{p+1} y,
\]
where $e(\chi\omega\dual) = (e^2 + \xi c)\zeta_1^{-1}$ was shown in \cite[Proposition~7.5]{Co:InfinitePublished},
and $[\Xp p]^* = \zeta_1^{p+1}y$ requires the invertible factor $\zeta_1^{p+1}$ to put it in the correct grading.
The calculation of the second component of $\eta(x_{p,q})$ is similar.
\end{proof}

In order to show that the long exact sequence implied by Proposition~\ref{prop:bb cofibration}
splits into short exact sequences, we need to introduce two more elements.

\begin{definition}
Let
\begin{align*}
    \divp = \divp[2p+1] = \divp[2p+1,2q+1] &= \cwd[p] - e^{-2(q+1)}\kappa\zeta_1^q x_{p,q} \\
\intertext{and}
    \divq = \divq[2q+1] = \divq[2p+1,2q+1] &= \cxwd[q] - e^{-2(p+1)}\kappa\zeta_0^p x_{p,q}.
\end{align*}
(We will have similar elements for the other types of quadrics,
and we use as many subscripts as necessary to make clear in context which element we are referring to.)
\end{definition}

The reason for introducing these elements is the following.

\begin{proposition}\label{prop:divisible elements}
In the cohomology of $\Qpq{2p+1}{2q+1}$, $\divp$
is infinitely divisible by $\zeta_0$
and  $\divq$ is infinitely divisible by $\zeta_1$.
\end{proposition}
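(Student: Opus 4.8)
The plan is to apply Proposition~\ref{prop:general divisibility} to $X = \Qpq{2p+1}{2q+1}$ with the splitting of the fixed set $X^\GG = \Qexp{2p+1}\disjunion\Qexq{(2q+1)}$, taking $X^0 = \Qexp{2p+1}$. By that proposition, to conclude that $\divp$ is infinitely divisible by $\zeta_0$ it suffices to produce an element $\overline{y}\in \HR^\alpha(X^0_+)$ (at the level of the free orbit, i.e.\ nonequivariant cohomology of $\Qexp{2p+1}$, regraded appropriately) such that $\eta_0(\divp) = \tau(\overline{y})$, where $\eta_0$ is restriction to the $X^0$ component of the fixed set. By symmetry, the same argument with $X^1 = \Qexq{(2q+1)}$ and $\zeta_1$ in place of $\zeta_0$ handles $\divq$, so I would carry out only the first case in detail.

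First I would compute $\eta_0(\divp)$ using the definition $\divp = \cwd[p] - e^{-2(q+1)}\kappa\zeta_1^q x_{p,q}$. The term $\eta_0(\cwd[p])$ is $c^p$ times an invertible power of $\zeta_1$, computed inside $\HH^\gr(\Qexp{2p+1}) \iso \HS[\zeta_0^{\pm 1},c,y]/\rels{c^p-2y,\,y^2}$; here $\cwd$ restricts to the standard generator (times the grading-correcting invertible factor $\zeta_1$), so $\eta_0(\cwd[p]) = \zeta_1^p c^p = 2\zeta_1^p y$ using the relation $c^p = 2y$. The term $\eta_0(x_{p,q})$ is exactly Lemma~\ref{lem:bb restriction}, which gives the first component as $(e^2+\xi c)^{q+1}\zeta_1^{p-q}y$. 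I would then substitute, using that $\kappa$, $e$, and $\xi$ satisfy the standard identities in $\HS$ (in particular $\kappa e^{-2} $ is related to the element whose product with $(e^2+\xi c)$ simplifies against $c^p = 2y$, $cy$-type relations, and $e\cdot(\text{restriction of }\tau(-))=0$), so that in $\HH^\gr(\Qexp{2p+1})$ the expression $\eta_0(\divp) = 2\zeta_1^p y - e^{-2(q+1)}\kappa\zeta_1^q(e^2+\xi c)^{q+1}\zeta_1^{p-q}y$ collapses. The key structural point is that every element of $\HH^\gr(\Qexp{2p+1})$ that is a $\Z$-multiple of $y$ (equivalently, lies in the image of multiplication by $2$, or lands in the appropriate subgroup) is in the image of the transfer $\tau\colon \HR^\gr((\Qexp{2p+1})_+) \to \HH^\gr(\Qexp{2p+1})$, since $\tau\rho$ is multiplication by $g$ and on a space with trivial action and torsion-free nonequivariant cohomology the image of $\tau$ is detected exactly by such divisibility; so $\eta_0(\divp)$, being visibly $2y$ times a unit (after the cancellation), lifts to some $\overline{y}$.

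The main obstacle I anticipate is the bookkeeping in the middle step: verifying that the coefficient of $y$ in $e^{-2(q+1)}\kappa\zeta_1^q(e^2+\xi c)^{q+1}\zeta_1^{p-q}$, expanded via the binomial theorem and reduced modulo the relations $c^p = 2y$, $cy = \tfrac12 c^{p+1}$-type consequences, and $c^{p+1} = 0$ (which hold after restriction since $y$ is top-dimensional), equals precisely $2\zeta_1^p$, so that $\eta_0(\divp) = 0$ outright, or else equals $2\zeta_1^p$ minus a transfer-divisible correction — in either case landing in the image of $\tau$. This is the same type of computation already carried out in \cite{CH:QuadricsI} and \cite{CHTFiniteProjSpace} for the analogous divisible elements, and the element $\divp$ was manufactured by subtracting off exactly the $x_{p,q}$-multiple needed to kill the non-transfer part of $\cwd[p]$; the role of the factor $e^{-2(q+1)}\kappa\zeta_1^q$ is precisely to match the Euler-class factor $(e^2+\xi c)^{q+1}\zeta_1^{-(q+1)}$ appearing in Lemma~\ref{lem:bb restriction}, up to the invertible powers of $\zeta_1$ and the unit relating $\kappa$ and $e^2$. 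Once $\eta_0(\divp) = \tau(\overline{y})$ is established, Proposition~\ref{prop:general divisibility} finishes the first half immediately, and the remark following that proposition (with $\eta_1$ and $\zeta_1$) finishes the second.
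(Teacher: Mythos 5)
Your overall strategy is exactly the paper's: restrict $\divp$ to the fixed-set component $\Qexp{2p+1}$, show the result is a transfer, and invoke Proposition~\ref{prop:general divisibility} (and symmetrically for $\divq$). But the step where you certify that $\eta_0(\divp)$ lies in the image of $\tau$ rests on a false criterion. You assert that any element of $\HH^\gr(\Qexp{2p+1})$ that is ``$2y$ times a unit'' is automatically in the image of the transfer because $\tau\rho$ is multiplication by $g$. With Burnside-ring coefficients this is wrong: in the relevant grading the group at level $\GG/\GG$ is a copy of $A(\GG)=\Z\{1,g\}$ on the generator $\zeta_1^p y$, and the image of $\tau$ is $\Z g$, which does not contain $2$. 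If your criterion held, then $\eta_0(\cwd[p])=\zeta_1^p c^p=2\zeta_1^p y$ would already be a transfer and $\cwd[p]$ itself would be infinitely divisible by $\zeta_0$, making the correction term in $\divp$ pointless --- which it is not. The actual mechanism, which you defer as ``bookkeeping,'' is the whole content of the proposition: the identity $e^{-n}\kappa\cdot\xi=0$ kills every term of the binomial expansion of $(e^2+\xi c)^{q+1}$ except $e^{2(q+1)}$, so $\eta_0(e^{-2(q+1)}\kappa\zeta_1^q x_{p,q})=\kappa\zeta_1^p y$ on the nose, and therefore
\[
    \eta_0(\divp) = (2-\kappa)\zeta_1^p y = g\zeta_1^p y = \tau(\zeta^p y),
\]
using $\kappa = 2-g$ and $g\cdot z=\tau\rho(z)$. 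Your fallback reasoning (``$2\zeta_1^p$ minus a transfer-divisible correction is in the image of $\tau$'') would not rescue the argument if the coefficient came out to anything other than a multiple of $g$, since $2\zeta_1^p y$ itself is not a transfer.

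A smaller omission: the computation via Lemma~\ref{lem:bb restriction} presumes $p>0$ (respectively $q>0$ for $\divq$). The paper handles $p=0$ separately by observing that $\Qpq{1}{2q+1}^\GG$ misses the component corresponding to $\zeta_0$, so every class is already infinitely divisible by $\zeta_0$ there; you should say something in that degenerate case.
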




\begin{proof}
If $p = 0$, every element in the cohomology of $\Qpq{1}{2q+1}$ is divisible by $\zeta_0$,
because $\Qpq{1}{2q+1}^\GG$ does not intersect $\Xp{1}$.
Similarly, if $q = 0$, every element is divisible by $\zeta_1$.

So assume that $p > 0$ and consider $\divp$.
We verify the assumptions of Proposition~\ref{prop:general divisibility}. 
Using Lemma~\ref{lem:bb restriction}, we have
\begin{align*}
    \eta_0(\cwd) &=  \zeta_1 c \\
    \eta_0(x_{p,q}) &= (e^2 + \xi c)^{q+1} \zeta_1^{p-q} y, \\
\intertext{so}
    \eta_0(\cwd[p]) &= \zeta_1^p c^p  = 2\zeta_1^p y \\
    \eta_0(e^{-2(q+1)}\kappa\zeta_1^q x_{p,q}) &= e^{-2(q+1)}\kappa(e^2 + \xi c)^{q+1} \zeta_1^p y \\
        &= \kappa \zeta_1^p y 
\end{align*}
because $e^{-n}\kappa \cdot \xi = 0$. This gives
\[
    \eta_0(\divp) = 2\zeta_1^py - \kappa\zeta_1^p y = g\zeta_1^p y = \tau(\zeta^p y),
\]
hence $\divp$ is infinitely divisible by $\zeta_0$ by Proposition~\ref{prop:general divisibility}.

The argument for $\divq$ is similar.
\end{proof}

We can now prove the following, which gives the additive structore of the cohomology of the quadric.

\begin{proposition}\label{prop:bb splitting}
There is a split short exact sequence
\[
    0 \to
    \susp^{\nu}\HH^\gr(\Xpq pq) \xrightarrow{j_!}
    \HH^\gr(\Qpq{2p+1}{2q+1}) \xrightarrow{i^*}
    \HH^\gr(\Spqk pq{p+q+1}) \to 0
\]
where $\nu = (p+1)\omega + (q+1)\chi\omega - 2$.
\end{proposition}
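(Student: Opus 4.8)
The plan is to start from the long exact sequence in $\HH^\gr$ induced by the cofibration of Proposition~\ref{prop:bb cofibration},
\[
    \cdots \to \susp^{\nu}\HH^\gr(\Xpq pq) \xrightarrow{j_!} \HH^\gr(\Qpq{2p+1}{2q+1}) \xrightarrow{i^*} \HH^\gr(\Spqk pq{p+q+1}) \xrightarrow{\delta} \susp^{\nu+1}\HH^\gr(\Xpq pq) \to \cdots,
\]
and show that the connecting map $\delta$ is zero; exactness then gives a short exact sequence, and splitting will follow because the right-hand term $\HH^\gr(\Spqk pq{p+q+1})$ is, by Proposition~\ref{prop:binate cohomology}, a direct sum of shifted copies of $\HS$ and $\HS_{\GG/e}[\zeta_0^{-1}]$, so a suitable lift of each module generator splits $i^*$. (For the $\HS$ summands one lifts to the appropriate characteristic classes $\cwd[a]\cxwd[b]$ already present in $\HH^\gr(\Qpq{2p+1}{2q+1})$; for the $\HS_{\GG/e}$ summand one lifts the generator $y$ of $\HH^{2(p+q)}(\Spqk pq{p+q+1})(\GG/e)$ to the nonequivariant class $y=[P']^*$, noting $P'$ restricts to the binate variety as $P$.)

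To prove $\delta=0$ it suffices to show $i^*$ is surjective, i.e. that every module generator of $\HH^\gr(\Spqk pq{p+q+1})$ is hit. The generators coming from the sub-ring $\HH^\gr(\Xpq pq)\subset\HH^\gr(\Spqk pq{p+q+1})$ are the images under $i^*$ of the corresponding characteristic classes on $\Qpq{2p+1}{2q+1}$, so these are immediate; the only issue is the $\GG/e$-generator $y$ together with its $\zeta_0$-localized partners $\zeta_0^{-k}y$. Here I would use the divisibility result: by Proposition~\ref{prop:divisible elements}, $\divp=\cwd[p]-e^{-2(q+1)}\kappa\zeta_1^q x_{p,q}$ is infinitely divisible by $\zeta_0$ in $\HH^\gr(\Qpq{2p+1}{2q+1})$, so there are classes $\widehat{\phi}_k$ with $\zeta_0^k\widehat{\phi}_k=\divp$. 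Applying $i^*$, restricting to the binate variety, and using Proposition~\ref{prop:binate cohomology} (where the relation $\cwd[p]\cxwd[q]=\zeta_0^q\zeta_1^p\tau(y)$ records exactly how the $\GG/e$-part enters), together with the description of $i^*x_{p,q}$ obtained by comparing the fixed-point restrictions in Lemma~\ref{lem:bb restriction}, shows that $i^*(\widehat{\phi}_k)$ is, up to a unit in $\HS$, the element $\zeta_0^{q-k}\zeta_1^p\tau(y)$ or more precisely supplies a preimage of $\zeta_0^{-k}y$ after multiplying by the appropriate invertible power of $\zeta_1$ and $\iota$. Thus all of $\susp^{2(p+q)}\HS_{\GG/e}[\zeta_0^{-1}]$ lies in the image of $i^*$, so $i^*$ is surjective and $\delta=0$.

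An alternative and cleaner route to $\delta=0$, which I would use if the bookkeeping above gets unwieldy, is to observe that $j_!$ is injective: the composite $j^*j_!$ is multiplication by the Euler class $e(\nu)$ of the normal bundle $\nu\iso(p+1)\omega\dual\dirsum(q+1)\chi\omega\dual-O(2)$, and one checks (as in the antisymmetric case) that this Euler class is a non-zero-divisor on $\HH^\gr(\Xpq pq)$ after the relevant localization, forcing $j_!$ injective on the nose; combined with exactness at the left this gives $\delta=0$ directly. Either way, the main obstacle is the $\GG/e$-summand: one must produce a genuine equivariant preimage of the free-orbit class $y$ (and its $\zeta_0$-divided versions), and it is precisely here that the divisibility machinery of Section~3 is essential — a naive lift of $y$ need not be $\zeta_0$-divisible, whereas $\divp$ is, and it is the interplay between $\divp$, $x_{p,q}$, and the binate relation $\cwd[p]\cxwd[q]=\zeta_0^q\zeta_1^p\tau(y)$ that makes the surjectivity work.
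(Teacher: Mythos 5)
Your overall skeleton matches the paper's: take the long exact sequence from Proposition~\ref{prop:bb cofibration}, prove $i^*$ is surjective, and get the splitting from the freeness of $\HH^\gr(\Spqk pq{p+q+1})$ over $\HS$. But you have located the difficulty in the wrong place, and this is a genuine gap. You assert that the generators coming from the summand $\HH^\gr(\Xpq pq)$ are ``immediate'' because they are images of the corresponding characteristic classes on $\Qpq{2p+1}{2q+1}$. That is false for the basis elements $\zeta_0^{-k}\cwd[p]$ and $\zeta_1^{-k}\cxwd[q]$ ($k>0$): these exist in $\HH^\gr(\Xpq pq)$, hence in $\HH^\gr(\Spqk pq{p+q+1})$, but $\cwd[p]$ is \emph{not} divisible by $\zeta_0$ in $\HH^\gr(\Qpq{2p+1}{2q+1})$, so there is no ``corresponding characteristic class'' to restrict. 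This is precisely where Proposition~\ref{prop:divisible elements} is needed: since $i^*(x_{p,q})=0$, one has $i^*(\divp)=\cwd[p]$, and the infinite $\zeta_0$-divisibility of $\divp$ gives $\zeta_0^{-k}\cwd[p]=i^*(\zeta_0^{-k}\divp)$. You instead aim the divisibility machinery at the elements $\zeta_0^{-k}y$, where it is not needed: $y$ lives at level $\GG/e$, where $\zeta_0$ already acts invertibly (because $\zeta_0\zeta_1=\xi$ is invertible on the $\HS_{\GG/e}$ summand and on level-$\GG/e$ groups), so once $y=i^*([P']^*)$ is hit, all of $\susp^{2(p+q)}\HS_{\GG/e}[\zeta_0^{-1}]$ is hit for free. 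Moreover your intermediate claim that $i^*(\widehat{\phi}_k)$ is a unit multiple of $\zeta_0^{q-k}\zeta_1^p\tau(y)$ is incorrect ($i^*(\zeta_0^{-k}\divp)=\zeta_0^{-k}\cwd[p]$, not a multiple of $\tau(y)$), and in any case an element at level $\GG/\GG$ can only map into the image of $\tau$, so it could never supply the type-$\GG/e$ generator $y$ itself.

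Your proposed alternative route also fails. You would show $j_!$ injective by arguing that $j^*j_!$, which is multiplication by $e(\nu)$, is injective on $\HH^\gr(\Xpq pq)$. But the projection formula gives $x_{p,q}^2=j_!(e(\nu))$, and $x_{p,q}^2=0$ (the two representing subvarieties are disjoint), so $e(\nu)$ is killed by $j_!$; granting the injectivity you are trying to prove, $e(\nu)=0$. In particular multiplication by $e(\nu)$ is nowhere near injective, and no localization fixes this. The injectivity of $j_!$ is a genuine output of the splitting argument here, not an input one can obtain from the Euler class.
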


\begin{proof}
The cofibration of Proposition~\ref{prop:bb cofibration} gives us a long exact sequence,
which we need to show breaks into the short exact sequence claimed.
That this short exact sequence splits then follows from the fact that
$\HH^\gr(\Spqk pq{p+q+1})$ is free over $\HS$.


We need to show that $i^*$ is surjective, so we show that every basis element is hit.
Using Remark~\ref{rem:binate representation} and the excess intersection formula
(in the case where the intersection is transverse, so $\epsilon = 0$), we see that $i^*(y ) = y $,
so what is left are the generators that come from $\Xpq pq\subset \Spqk pq{p+q+1}$.

The inclusions $\Xpq pq\subset \Spqk pq{p+q+1}\subset \Qpq{2p+1}{2q+1}\subset BU(1)$
show that $i^*$ is a map of modules over $\HH^\gr(BU(1))$ and that
$i^*$ hits those generators that come from $\HH^\gr(BU(1))$.
The generators that don't come from $\HH^\gr(BU(1))$ are 
$\HH^\gr(BU(1))$-multiples of $\zeta_0^{-k}\cwd[p]$ or
$\zeta_1^{-k}\cxwd[q]$ for $k>0$, so it suffices to show that these elements
are hit by $i^*$.
But $i^*(\divp) = \cwd[p]$ (because $i^*(x_{p,q}) = 0)$ and $\divp$ is infinitely divisible by $\zeta_0$,
hence
\[
    \cwd[p] = i^*(\divp) = \zeta_0^k i^*(\zeta_0^{-k}\divp),
\]
showing that $\zeta_0^{-k}\cwd[p] = i^*(\zeta_0^{-k}\divp)$.
Similarly, $\zeta_1^{-k}\cxwd[q] = i^*(\zeta_1^{-k}\divq)$.
\end{proof}

Combining this result with Proposition~\ref{prop:binate cohomology}, we get the following.

\begin{corollary}\label{cor:bb additive}
Additively,
\[
    \HH^\gr(\Qpq{2p+1}{2q+1}) \iso \HH^\gr(\Xpq pq) \dirsum \susp^{2(p+q)}\HS_{\GG/e}[\zeta_0^{-1}]
        \dirsum \susp^\nu\HH^\gr(\Xpq pq).
\]
\qed
\end{corollary}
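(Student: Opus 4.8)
The plan is to assemble the corollary directly from the two preceding propositions, with essentially no new work. First I would invoke Proposition~\ref{prop:bb splitting}, which provides a \emph{split} short exact sequence of $\HS$-modules
\[
    0 \to \susp^{\nu}\HH^\gr(\Xpq pq) \xrightarrow{j_!} \HH^\gr(\Qpq{2p+1}{2q+1}) \xrightarrow{i^*} \HH^\gr(\Spqk pq{p+q+1}) \to 0,
\]
with $\nu = (p+1)\omega + (q+1)\chi\omega - 2$. Splitness — which is the one point that draws on input beyond formal nonsense, namely the freeness of $\HH^\gr(\Spqk pq{p+q+1})$ over $\HS$ recorded in Proposition~\ref{prop:binate cohomology} — immediately yields an isomorphism of graded $\HS$-modules
\[
    \HH^\gr(\Qpq{2p+1}{2q+1}) \iso \susp^{\nu}\HH^\gr(\Xpq pq) \dirsum \HH^\gr(\Spqk pq{p+q+1}).
\]

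It then remains only to substitute the additive description of the binate Schubert variety from Proposition~\ref{prop:binate cohomology}, namely $\HH^\gr(\Spqk pq{p+q+1}) \iso \HH^\gr(\Xpq pq) \dirsum \susp^{2(p+q)}\HS_{\GG/e}[\zeta_0^{-1}]$, into the displayed decomposition, which produces exactly the claimed three-summand formula. There is no real obstacle here: all of the content has been pushed into Propositions~\ref{prop:binate cohomology} and~\ref{prop:bb splitting} (and behind them the divisibility result Proposition~\ref{prop:divisible elements} and the excess intersection computation of Lemma~\ref{lem:bb restriction}). The only things to watch are that the isomorphism is being asserted additively only — the multiplicative interaction of the two copies of $\HH^\gr(\Xpq pq)$ with each other and with the free summand is deferred to Theorem~\ref{thm:bb mutliplicative} — and that the grading shift $\susp^\nu$ on the second copy of $\HH^\gr(\Xpq pq)$ is transcribed correctly.
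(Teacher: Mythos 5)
Your proposal is correct and matches the paper's own (implicit) proof exactly: the corollary is obtained by splitting the short exact sequence of Proposition~\ref{prop:bb splitting} using the freeness of $\HH^\gr(\Spqk pq{p+q+1})$ over $\HS$, and then substituting the additive decomposition of $\HH^\gr(\Spqk pq{p+q+1})$ from Proposition~\ref{prop:binate cohomology}. Nothing further is needed.
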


We would like to use this result to give names to the elements in a basis,
but we have to be careful with the first summand. The last summand
is the image of $j_!$; we defined $x_{p,q} = j_!(1)$ and $j_!$ is a map of $\HH^\gr(BU(1))$-modules, so
$j_!(z) = z x_{p,q}$ if $z$ is an element that comes from $BU(1)$.
On the other hand, $\cwd[p]$ is divisible by $\zeta_0$ in the cohomology of $\Xpq pq$, and
\[
    \zeta_0^k j_!(\zeta_0^{-k}\cwd[p]) = j_!(\cwd[p]) = \cwd[p] x_{p,q},
\]
so $\cwd[p] x_{p,q}$ is divisible by $\zeta_0$ with
\[
    \zeta_0^{-k}\cwd[p] x_{p,q} = j_!(\zeta_0^{-k}\cwd[p]).
\]
Similarly, $\zeta_1^{-k}\cxwd[q] x_{p,q} = j_!(\zeta_1^{-k}\cxwd[q])$.
Thus, we are justified in writing the elements of the last summand as $z x_{p,q}$
for $z$ in the cohomology of $\Xpq pq$.

The middle summand we can take as generated by the element $y $ at level $\GG/e$ 
because $i^*(y ) = y $,
as we noted in the proof of Proposition~\ref{prop:bb splitting}.

For the first summand, we are tempted to reuse the names of the elements in
the cohomology of $\Xpq pq$. For those that come from $BU(1)$, that is
perfectly legitimate, given the inclusions $\Xpq pq \includesin \Qpq{2p+1}{2q+1}\includesin BU(1)$. 
However, if $p, q > 0$, then $\cwd[p]$ is divisible by $\zeta_0$ in the cohomology
of $\Xpq pq$ but \emph{not} in the cohomology of $\Qpq{2p+1}{2q+1}$.
(If $p = 0$, every element in the cohomology of $\Qpq{2p+1}{2q+1}$ is divisible by $\zeta_0$;
if $q = 0$, then $\cwd[p] = 0$ in $\Xpq pq$ and its value in $\Qpq{2p+1}{2q+1}$ will
be dealt with later.)
So we cannot write $\zeta_0^{-k}\cwd[p]$, we need to use $\zeta_0^{-k}\divp$ instead.
With this substitution, and substituting $\zeta_1^{-k}\divq$ for $\zeta_1^{-k}\cxwd[q]$,
we could write down a complete basis for the cohomology of $\Qpq{2p+1}{2q+1}$ over $\HS$
(but we will not do so in general, it would be tedious and unenlightening).

It remains to give the multiplicative structure.
We separate out one mildly complicated calculation.
We use here the restriction map $\rho\colon \HH^\gr(X)(\GG/\GG) \to \HH^\gr(X)(\GG/e)$
and the fixed-point map
\[
    (-)^\GG\colon \HH^\alpha(X) \to H^{\alpha_0^\GG}(X^0) \dirsum H^{\alpha_1^\GG}(X^1)
\]
discussed in \cite[\S A.2]{CH:QuadricsI}.

\begin{lemma}\label{lem:divp divq}
In the cohomology of $\Qpq{2p+1}{2q+1}$,
$\divp \divq = \tau(\iota^{2q}\zeta^{p-q}y )$.
\end{lemma}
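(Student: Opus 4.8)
The plan is to compute $\divp\divq$ by restricting along $i\colon \Spqk pq{p+q+1}\includesin\Qpq{2p+1}{2q+1}$, to identify that restriction via Proposition~\ref{prop:binate cohomology}, to recognize $\tau(\iota^{2q}\zeta^{p-q}y)$ as a lift of it, and finally to argue that the residual element, which lies in $\ker i^*$, vanishes. I would treat the degenerate cases $p=0$ and $q=0$ separately, exactly as in the proof of Proposition~\ref{prop:divisible elements}, where every class is automatically $\zeta_0$- (resp.\ $\zeta_1$-) divisible.

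For the main case, recall from the proof of Proposition~\ref{prop:bb splitting} that $i^*(\divp) = \cwd[p]$ and $i^*(\divq) = \cxwd[q]$, since $i^*(x_{p,q}) = 0$. Hence, using the multiplicative relation of Proposition~\ref{prop:binate cohomology},
\[
    i^*(\divp\divq) = \cwd[p]\cxwd[q] = \zeta_0^q\zeta_1^p\tau(y) \in \HH^\gr(\Spqk pq{p+q+1}).
\]
On the other hand $i^*$ is a map of graded Mackey functors, so it commutes with $\tau$ and fixes the constants $\iota$, $\zeta$ and the class $y$ (we saw $i^*(y)=y$ in the proof of Proposition~\ref{prop:bb splitting}); thus $i^*(\tau(\iota^{2q}\zeta^{p-q}y)) = \tau(\iota^{2q}\zeta^{p-q}y)$. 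By the projection formula $\zeta_0^q\zeta_1^p\tau(y) = \tau(\rho(\zeta_0)^q\rho(\zeta_1)^p\,y)$, and using the standard restriction identities relating $\rho(\zeta_0)$ and $\rho(\zeta_1)$ to $\iota$ and $\zeta$ (see the Appendix of \cite{CH:QuadricsI}) one checks $\rho(\zeta_0)^q\rho(\zeta_1)^p = \iota^{2q}\zeta^{p-q}$. Therefore $i^*(\divp\divq) = i^*(\tau(\iota^{2q}\zeta^{p-q}y))$, so that
\[
    \divp\divq - \tau(\iota^{2q}\zeta^{p-q}y) \in \ker i^* = j_!\HH^\gr(\Xpq pq)
\]
by the exact sequence of Proposition~\ref{prop:bb splitting}.

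It then remains to show this residual element is zero. By Proposition~\ref{prop:divisible elements}, $\divp$ is infinitely divisible by $\zeta_0$, hence so is $\divp\divq$; and $\tau(\iota^{2q}\zeta^{p-q}y)$ lies in the summand $\susp^{2(p+q)}\HS_{\GG/e}[\zeta_0^{-1}]$ of Corollary~\ref{cor:bb additive}, on which $\zeta_0$ --- and therefore also $\zeta_1 = \zeta_0^{-1}\xi$, since $\xi$ acts invertibly on free-orbit cohomology --- acts invertibly, so $\tau(\iota^{2q}\zeta^{p-q}y)$ is infinitely divisible by both $\zeta_0$ and $\zeta_1$. By the symmetric argument (now using $\divq$), the residual element is infinitely divisible by both $\zeta_0$ and $\zeta_1$ as well. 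But $j_!$ is a split monomorphism, and the additive structure of $\HH^\gr(\Xpq pq)$ recorded in \cite{CHTFiniteProjSpace} --- it carries no summands of type $\GG/e$, its restriction to the fixed set $\Xp p\disjunion\Xq q$ is injective, and on each of those two components one of $\zeta_0,\zeta_1$ is a non-unit admitting no nonzero infinitely divisible element --- shows that no nonzero element of $j_!\HH^\gr(\Xpq pq)$ is infinitely divisible by both $\zeta_0$ and $\zeta_1$. Hence the residual element is $0$ and $\divp\divq = \tau(\iota^{2q}\zeta^{p-q}y)$.

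The step I expect to be the main obstacle is this last one, together with the bookkeeping in the matching: getting the powers of $\iota$ and $\zeta$ exactly right in the projection-formula computation while tracking the $\sigma-1$ ambiguity in the grading of $y$, and then pinning down precisely why the residual $j_!$-term vanishes --- which is, in effect, the assertion that the type-$\GG/e$ freeness appearing for the quadric does not already appear for $\Xpq pq$. An alternative to the restriction argument is to expand $\divp\divq$ directly from the definitions: the term in $x_{p,q}^2$ vanishes because $x_{p,q}^2 = j_!(j^*j_!(1)) = j_!(e(\nu)) = 0$, the projection formula rewrites the two cross terms as $\HH^\gr(BU(1))$-multiples of $j_!(\cwd[p])$ and $j_!(\cxwd[q])$, and one is left to identify $\cwd[p]\cxwd[q]$ in $\HH^\gr(\Qpq{2p+1}{2q+1})$ exactly as above; this route simply relocates the same final vanishing into the cancellation of the $\kappa$-terms.
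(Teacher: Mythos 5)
Your reduction of the problem is a genuinely different and rather elegant route to the coefficient of $\tau(\iota^{2q}\zeta^{p-q}y)$: restricting along $i$, invoking the relation $\cwd[p]\cxwd[q]=\zeta_0^q\zeta_1^p\tau(y)$ from Proposition~\ref{prop:binate cohomology}, and matching via the projection formula correctly shows that $\divp\divq-\tau(\iota^{2q}\zeta^{p-q}y)$ lies in $\ker i^*=j_!\HH^\gr(\Xpq pq)$ and is infinitely divisible by both $\zeta_0$ and $\zeta_1$. (The paper instead writes $\divp\divq=a\tau(\iota^{2q}\zeta^{p-q}y)+be^{-2}\kappa x_{p,q}+ce^{-4}\kappa\zeta_0\cwd x_{p,q}$ using the additive structure in grading $p\omega+q\chi\omega$ and gets $a=1$ by applying $\rho$; your steps up to this point are a valid substitute for that part.)

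The gap is in the final vanishing, and both facts you invoke there are false. First, restriction to the fixed set is not injective on $\HH^\gr(\Xpq pq)$: since $\rho(e)=0$ we have $ge=0$, so for instance when $p\geq q$ the nonzero element $g\cwd[p]$ restricts to $0$ on $\Xp p$ (where $\cwd[p]$ already vanishes, as $c^p=0$ there) and to $g(e^2+\xi c)^p\zeta_0^{-p}=0$ on $\Xq q$. Second, and more fatally, there \emph{are} nonzero elements of $j_!\HH^\gr(\Xpq pq)$ infinitely divisible by both $\zeta_0$ and $\zeta_1$: every transferred class is, because $\rho(\zeta_0)$ and $\rho(\zeta_1)$ are units at level $\GG/e$, so $\tau(z)=\zeta_0^k\tau(\rho(\zeta_0)^{-k}z)$ for all $k$; concretely $gx_{p,q}=j_!(\tau(1))=\tau(\rho(x_{p,q}))\neq 0$. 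So double divisibility alone cannot force an element of the image of $j_!$ to vanish, and the argument must descend to the specific grading $p\omega+q\chi\omega$, where the image of $j_!$ is spanned over $\Z$ by $e^{-2}\kappa x_{p,q}$ and $e^{-4}\kappa\zeta_0\cwd x_{p,q}$. The way to rule out a nonzero combination of these is to apply the fixed-point map $(-)^\GG$: an element divisible by $\zeta_0$ and by $\zeta_1$ dies under $(-)^\GG$ (each $\zeta_i^\GG$ vanishes on one component), whereas $(be^{-2}\kappa x_{p,q}+ce^{-4}\kappa\zeta_0\cwd x_{p,q})^\GG=(2bcy,(2b+2c)cy)$ is zero only for $b=c=0$. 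That computation is precisely how the paper pins down the two remaining coefficients, so the step you deferred is not a formality but the substantive content of the proof; your alternative expansion at the end relocates, but does not supply, the same missing verification.
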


\begin{proof}
The product $\divp\divq$
lives in grading $p\omega + q\chi\omega$. 
From our calculation of the additive structure, we can see that,
in that grading, all elements are linear combinations of
$\tau(\iota^{2q}\zeta^{p-q}y )$,
$e^{-2}\kappa x_{p,q}$, and $e^{-4}\kappa \zeta_0\cwd x_{p,q}$, that is,
\[
    \divp\divq = a\tau(\iota^{2q}\zeta^{p-q}y ) + be^{-2}\kappa x_{p,q} + ce^{-4}\kappa \zeta_0\cwd x_{p,q}
\]
for some $a,b,c\in \Z$. 
Applying $\rho$, we have
\begin{align*}
    \rho(\divp\divq) &= \iota^{2q}\zeta^{p-q} c^{p+q} \\
    \rho(\tau(\iota^{2q}\zeta^{p-q}y )) &= \iota^{2q}\zeta^{p-q}(1+t)y  = \iota^{2q}\zeta^{p-q}c^{p+q} \\
    \rho(e^{-2}\kappa x_{p,q}) &= 0 \\
    \rho(e^{-4}\kappa \zeta_0\cwd x_{p,q}) &= 0,
\end{align*}
so $a = 1$.
Taking fixed points, we have
\begin{align*}
    (\divp\divq)^\GG &= \bigl[(c^p,1) - 2(cy,0)\bigr]\bigl[(1,c^q) - 2(0,cy)\bigr] \\ &= (0,1)(1,0) = (0,0) \\
    \tau(\iota^{2q}\zeta^{p-q}y )^\GG &= (0,0) \\
    (e^{-2}\kappa x_{p,q})^\GG &= 2(cy,cy) \\
    (e^{-4}\kappa \zeta_0\cwd x_{p,q})^\GG &= 2(0,cy)
\end{align*}
From this we see that we must have $b = 0$ and $c = 0$, proving the lemma.
(The formulas for the fixed points assume that $p>0$ and $q>0$, but the cases where $p=0$ or $q=0$ are simpler,
because $\Qpq{2p+1}{2q+1}^\GG$ then has only one component.)
\end{proof}

\begin{theorem}\label{thm:bb mutliplicative}
As an algebra over $\HH^\gr(BU(1))$, $\HH^\gr(\Qpq{2p+1}{2q+1})$ is generated by
\[
    y  \in \HH^{2(p+q)}(\Qpq{2p+1}{2q+1})(\GG/e)
\]
and 
\[
    x_{p,q} \in \HH^{(p+1)\omega + (q+1)\chi\omega  - 2}(\Qpq{2p+1}{2q+1})(\GG/\GG),
\]
subject to the facts that
\begin{align*}
    \divp &= \cwd[p] - e^{-2(q+1)}\kappa\zeta_1^q x_{p,q} \\
\intertext{is infinitely divisible by $\zeta_0$ and}
    \divq &= \cxwd[q] - e^{-2(p+1)}\kappa\zeta_0^p x_{p,q}
\end{align*}
is infinitely divisible by $\zeta_1$,
and the following relations, where $c = \zeta^{-1}\rho(\cwd)$:
\begin{align*}
    c^{p+q+1} &= 2cy  \\
    y ^2 &= \begin{cases}
                    0 &\text{if $p+q+1$ is even} \\
                    c^{p+q}y  &\text{if $p+q+1$ is odd}
                 \end{cases} \\
    \rho(x_{p,q}) &= \iota^{2(q+1)}\zeta^{p-q} cy  \\
    x_{p,q}^2 &= 0 \\
    \divp \divq &= \tau(\iota^{2q}\zeta^{p-q}y ).
\end{align*}
\end{theorem}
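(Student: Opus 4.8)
The plan is to assemble the theorem from the pieces already established. Corollary~\ref{cor:bb additive} gives the additive decomposition, and Proposition~\ref{prop:bb splitting} identifies $j_!$ and $i^*$, so the first thing I would do is explain that $\HH^\gr(\Qpq{2p+1}{2q+1})$ is generated as an $\HH^\gr(BU(1))$-algebra by $x_{p,q}$ and $y$. The last summand $\susp^\nu\HH^\gr(\Xpq pq)$ is exactly the image of $j_!$, and since $j_!$ is a map of $\HH^\gr(BU(1))$-modules with $x_{p,q}=j_!(1)$, every element there is of the form $z\,x_{p,q}$ with $z$ coming from $BU(1)$ (using the divisibility bookkeeping recorded in the paragraph after Corollary~\ref{cor:bb additive} to handle $\zeta_0^{-k}\cwd[p]x_{p,q}$ etc.). The middle summand is generated by $y$ at level $\GG/e$ because $i^*(y)=y$; the transfer $\tau(y)$ and its $\HH^\gr(BU(1))$-multiples then lie in the algebra generated by $y$. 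The first summand is handled via $i^*$: its image contains everything coming from $BU(1)$ together with $\zeta_0^{-k}\cwd[p]$ and $\zeta_1^{-k}\cxwd[q]$, and these last are hit by $i^*(\zeta_0^{-k}\divp)$ and $i^*(\zeta_1^{-k}\divq)$, so modulo the image of $j_!$ the elements $\divp$, $\divq$ and $y$ generate. The infinite divisibilities of $\divp$ by $\zeta_0$ and $\divq$ by $\zeta_1$ are precisely Proposition~\ref{prop:divisible elements}.

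Next I would verify the five relations. The first two, $c^{p+q+1}=2cy$ and the formula for $y^2$, are forced by the nonequivariant statement: restricting to $\GG/e$ (applying $\rho$) lands in $H^*(\Qexp{2p+1})$ — the underlying variety is nonequivariantly a quadric of type D in $\C^{2p+2q+2}$ — and Proposition~\ref{prop:nonequivariant} gives exactly these relations with $c^{p+q+1}=2cy$ and $y^2=\epsilon c^{p+q}y$ where $\epsilon$ depends on the parity of $p+q+1$; since $\rho$ is injective on the relevant gradings (the cohomology is free and the $\GG/\GG$ part in a grading $m\Omega_1+RO(\GG)$ injects appropriately — or more simply, these are relations among products of $c$ and $y$, which are detected at $\GG/e$), they hold equivariantly. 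The relation $\rho(x_{p,q})=\iota^{2(q+1)}\zeta^{p-q}cy$ I would get from Lemma~\ref{lem:bb restriction} by restricting further from the fixed points, or directly by computing $\rho(j_!(1))$ via the projection formula, noting $e^2+\xi c$ maps to $\iota^2 c$ (up to the standard normalization relating $e$, $\xi$, $\zeta$, $\iota$ at level $\GG/e$) so $(e^2+\xi c)^{q+1}\zeta_1^{p-q}y$ restricts to $\iota^{2(q+1)}\zeta^{p-q}cy$. The relation $x_{p,q}^2=0$ follows because $x_{p,q}^2=j_!(j^*x_{p,q})=j_!(j^*j_!(1))=j_!(e(\nu))$ where $\nu$ is the normal bundle of $j$, and $e(\nu)$ is divisible by the Euler class of a trivial-representation summand hence is $c\cdot(\text{something})$, but the grading $2\nu$ is too large — more carefully, one checks $e(\nu)=0$ in $\HH^\gr(\Xpq pq)$ in the relevant grading because $\nu\cong(p+1)\omega\dual\dirsum(q+1)\chi\omega\dual-O(2)$ and the top-degree obstruction vanishes (this is the standard self-intersection computation for these cells, paralleling the antisymmetric case). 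Finally $\divp\divq=\tau(\iota^{2q}\zeta^{p-q}y)$ is exactly Lemma~\ref{lem:divp divq}.

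The last part is showing these relations are \emph{complete} — that the algebra presented by $x_{p,q}$, $y$, the two divisibility conditions, and the five relations has the right size in each grading, matching Corollary~\ref{cor:bb additive}. Here I would argue that in each coset $m\Omega_1+RO(\GG)$ the listed relations let one reduce any monomial in $\cwd,\cxwd,x_{p,q},y$ (and their $\zeta_0^{-1},\zeta_1^{-1}$ multiples) to a combination of a standard spanning set with $2(p+q)$ generators of type $\GG/\GG$ and one of type $\GG/e$, then invoke the additive result to conclude the spanning set is a basis and no further relations are possible. The main obstacle I expect is this completeness/counting step: one must check that the relations $c^{p+q+1}=2cy$, $y^2=\epsilon c^{p+q}y$, $x_{p,q}^2=0$, $\rho(x_{p,q})=\iota^{2(q+1)}\zeta^{p-q}cy$, and $\divp\divq=\tau(\iota^{2q}\zeta^{p-q}y)$ — together with the divisibility facts, which are what allow division by $\zeta_0,\zeta_1$ to stay in the module — suffice to rewrite every product, with particular care near the gradings where $\cwd[p]$ and $\cxwd[q]$ interact (that is where $\divp\divq$ does its work) and near level $\GG/e$, where the subtlety flagged in Remark~\ref{rem:bb divp times divq} about $1$ versus $t$ generating $\HS_{\GG/e}^0(\GG/e)$ as a group must be respected. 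The verification of the individual relations is, by contrast, routine given the excess intersection formula and the fixed-point/restriction computations already in hand.
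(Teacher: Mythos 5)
Your verification of the individual relations largely parallels the paper's, but the heart of the theorem --- showing that the listed relations are \emph{complete} --- is precisely the step you defer, and the plan you sketch (reduce every monomial to a standard spanning set in each coset $m\Omega_1+RO(\GG)$ and count against Corollary~\ref{cor:bb additive}) is not carried out; it would be genuinely delicate because of the infinitely many divided elements $\zeta_0^{-k}\divp$ and $\zeta_1^{-k}\divq$ and the level-$\GG/e$ bookkeeping you yourself flag. The paper avoids this combinatorics with a structural argument: let $A$ be the abstract algebra on the stated generators, $I$ the ideal of relations, $J=\rels{x_{p,q}}\subset A$, and compare the short exact sequence $0\to (J+I)/I\to A/I\to A/(J+I)\to 0$ with $0\to K\to B\to B/K\to 0$, where $B=\HH^\gr(\Qpq{2p+1}{2q+1})$ and $K=\rels{x_{p,q}}$ is identified by Proposition~\ref{prop:bb splitting} with $\susp^\nu\HH^\gr(\Xpq pq)$, with quotient $\HH^\gr(\Spqk pq{p+q+1})$. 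One then checks that setting $x_{p,q}=0$ in $A/I$ reproduces exactly the presentation of $\HH^\gr(\Spqk pq{p+q+1})$ from Proposition~\ref{prop:binate cohomology} (the last relation becomes $\cwd[p]\cxwd[q]=\tau(\iota^{2q}\zeta^{p-q}y)$), and that the relations in $I$ force $\cwd[p]x_{p,q}=\divp x_{p,q}$ and $\cxwd[q]x_{p,q}=\divq x_{p,q}$ to be divisible by $\zeta_0$ and $\zeta_1$ and force $\cwd[p]\cxwd[q]x_{p,q}=\divp\divq x_{p,q}=\tau(\iota^{2(2q+1)}\zeta^{2(p-q)}cy^2)=0$, which is exactly the presentation of $\susp^\nu\HH^\gr(\Xpq pq)$. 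Without this step (or a fully executed version of your counting argument), the proof is incomplete.

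Two smaller points on the relation checks. Your argument for $x_{p,q}^2=0$ asserts that $e(\nu)=0$ because ``the top-degree obstruction vanishes,'' which is not a proof, and $\nu$ is only presented as a virtual bundle of the form $(p+1)\omega\dual\dirsum(q+1)\chi\omega\dual-O(2)$, so even formulating the Euler class you want to kill requires care; the paper's argument is shorter and complete: $x_{p,q}$ is represented both by $j(\Xpq pq)$ and by the disjoint copy $i(\Xpq pq)\subset \Spqk pq{p+q+1}$, so the square vanishes. Also, you cannot obtain $\rho(x_{p,q})$ ``by restricting further from the fixed points'': $\rho$ (restriction to level $\GG/e$) does not factor through $\eta$ (restriction to the fixed set), so Lemma~\ref{lem:bb restriction} is not the right tool here; the intended quick argument is that $\rho(x_{p,q})$ and $cy$ are both represented by $\Xp{p+q}$, with the unit factor $\iota^{2(q+1)}\zeta^{p-q}$ accounting for the grading shift.
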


\begin{proof}
We first verify that the stated relations actually hold.
That $\divp$ is infinitely divisible by $\zeta_0$ and $\divq$ is infinitely divisible by $\zeta_1$
was shown in Proposition~\ref{prop:divisible elements}.
The Euler class $\cwd$ restricts to the nonequivariant Euler class $c = e(\omega\dual)$
on applying $\rho$, with a shift in grading accounted for by the factor of $\zeta^{-1}$ stated,
hence $c$ and $y $ satisfy the known relations in $H^*(\Qexp{2(p+q+1)})$.
That $\rho(x_{p,q})$ is $cy $ up to a shift in grading follows from the fact that
both are represented by $\Xp{p+q}$.

The element $x_{p,q}$ can be represented by $j(\Xpq pq)$ and also by the image of
\[
    \Xpq pq \includesin \Spqk pq{p+q+1} \xrightarrow{i} \Qpq{2p+1}{2q+1}.
\]
Because these two representatives are disjoint, $x_{p,q}^2 = 0$.
Note that this and the divisibility of $\divp$ and $\divq$ shows that
$\cwd[p]x_{p,q}$ is infinitely divisible by $\zeta_0$ and $\cxwd[q]x_{p,q}$
is infinitely divisible by $\zeta_1$.

We verified that $\divp\divq = \tau(\iota^{2q}\zeta^{p-q}y )$
in Lemma~\ref{lem:divp divq}.

Here is the structure of the rest of the proof; 
this is similar to the argument used in \cite{CH:QuadricsI} and
will be used for the other types of quadrics as well.
Proposition~\ref{prop:bb splitting} and the discussion after Corollary~\ref{cor:bb additive}
show the following: The principle ideal 
\[
    K^\gr = \rels{x_{p,q}}
    \subset B^\gr = \HH^\gr(\Qpq{2p+1}{2q+1})
\]
satisfies
\begin{align*}
    K^\gr &\iso \susp^\nu \HH^\gr(\Xpq pq) \\
\intertext{and}
    B^\gr/K^\gr &\iso \HH^\gr(\Spqk pq{p+q+1}).
\end{align*}
Let $A^\gr$ be the graded ring
\[
    A^\gr = \HH^\gr(BU(1))[x_{p,q}, y,\zeta_0^{-k}\divp, \zeta_1^{-k}\divq \mid k\geq 1]
\]
where $x_{p,q}$ is a generator of type $\GG/\GG$ in grading $(p+1)\omega + (q+1)\chi\omega - 2$
and $y$ is a generator of type $\GG/e$ in grading $2(p+q)$.
(Precisely, we form the polynomial algebra on these elements and
impose the relations that $\zeta_0\cdot \zeta_0^{-k}\divp = \zeta_0^{-(k-1)}\divp$
and $\zeta_1\cdot\zeta_1^{-k}\divq = \zeta_1^{-(k-1)}\divq$ for $k\geq 1$.)
Let $I^\gr \subset A^\gr$ be the ideal generated by the relations given in the theorem
and let $J^\gr = \rels{x_{p,q}} \subset A^\gr$.

We verified that the relations in $I^\gr$ hold in $B^\gr$, so we have a ring map
$A^\gr/I^\gr \to B^\gr$ taking generators to elements of the same names.
This induces the following commutative diagram 
(we now drop the grading from the notation for readability).
\[
    \xymatrix{
        0 \ar[r] & (J+I)/I \ar[r] \ar[d] & A/I \ar[r] \ar[d] & A/(J+I) \ar[r] \ar[d] & 0 \\
        0 \ar[r] & K \ar[r] & B \ar[r] & B/K \ar[r] & 0
    }
\]
If we can show that 
\begin{align*}
    A/(J+I) &\iso B/K \iso \HH^\gr(\Spqk pq{p+q+1}) \\
\intertext{and}
    (J+I)/I &\iso K \iso \susp^\nu \HH^\gr(\Xpq pq),
\end{align*}
then $A/I \iso B$ as claimed.

Now $A/(J+I)$ is formed by setting $x_{p,q} = 0$
and imposing the relations stated in the theorem.
This makes $\cwd[p] = \divp$ and $\cxwd[q] = \divq$ and makes them divisible by $\zeta_0$ and $\zeta_1$, respectively.
The last relation can be simplified to $\cwd[p]\cxwd[q] = \tau(\iota^{2q}\zeta^{p-q}y )$.
But this is precisely the structure from Proposition~\ref{prop:binate cohomology}, hence
\[
    A/(J+I) \iso \HH^\gr(\Spqk pq{p+q+1})
\]
as claimed.

Now consider $(J+I)/I$ and consider it as a module over $A$. Because $J$ is generated by $x_{p,q}$,
so is $(J+I)/I \iso J/(J\intersect I)$. From the relations in $I$, we get
\begin{align*}
    \cwd[p] x_{p,q} &= \divp x_{p,q}, \\
    \cxwd[q] x_{p,q} &= \divq x_{p,q},
\end{align*}
hence these elements are divisible by $\zeta_0$ and $\zeta_1$, respectively.
We also have
\[
    \cwd[p]\cxwd[q] x_{p,q} = \divp \divq x_{p,q} = \tau(\iota^{2(2q+1)}\zeta^{2(p-q)}cy ^2) = 0
\]
because the nonequivariant relations give $cy ^2 = 0$.
From these we can see that we have exactly the relations we need to say that
\[
    (J+I)/I \iso \susp^\nu \HH^\gr(\Xpq pq)
\]
as claimed.

Therefore, $A/I \iso \HH^\gr(\Qpq{2p+1}{2q+1})$ has the structure asserted in the theorem.
\end{proof}

\begin{remark}\label{rem:bb divp times divq}
Note that, when we apply $\rho$ to $\divp \divq = \tau(\iota^{2q}\zeta^{p-q}y )$
and cancel the unit factors, we get
\[
    c^{p+q} = (1+t)y  = y  + ty .
\]
What this is saying is that, nonequivariantly, the alternative generator $y' = c^{p+q} - y $
is $ty $.
Geometrically, the two classes of maximal isotropic affine subspaces are interchanged by the action of $\GG$,
and this is neatly captured by the structure of the equivariant cohomology considered as Mackey functor-valued.

We can relate this to a distinction we mentioned in \S\ref{sec:mackey}.
The element $y$ generates a summand of $\HS_{\GG/e}$
as an $\HS$-module. That means that the group it lives in, $\HH^{2(p+q)}(\Qpq{2p+1}{2q+1})(\GG/e)$,
is generated \emph{as a group} by $y$ and $ty$.
This explains the difference between our description here of the equivariant cohomology,
where $y$ appears to be the only generator in the ``middle'' grading,
and the description of the nonequivariant cohomology, which has two generators in the middle grading.
\end{remark}

\begin{remark}
There are various other relations implied by the ones given in the theorem. One in particular is
the calculation of $\cwd[p]\cxwd[q]$, for which we need a preliminary calculation:
If $p > 0$, then
\begin{align*}
    \divp \cdot e^{-2(p+1)}\kappa\zeta_0^p x_{p,q}
    &= (\cwd[p] - e^{-2(q+1)}\kappa\zeta_1^q x_{p,q})e^{-2(p+1)}\kappa\zeta_0^p x_{p,q} \\
    &= e^{-2(p+1)}\kappa\zeta_0^p\cwd[p] x_{p,q} \\
    &= e^{-4}\kappa \zeta_0\cwd x_{p,q}
\end{align*}
by induction on the power of $\zeta_0\cwd$.
Similarly, if $q > 0$, then
\[
    \divq\cdot e^{-2(q+1)}\kappa\zeta_1^q x_{p,q} = e^{-4}\kappa \zeta_1\cxwd x_{p,q}.
\]
If $p,q > 0$, this then gives us
\begin{align*}
    \cwd[p]\cxwd[q]
    &= (\divp + e^{-2(q+1)}\kappa\zeta_1^q x_{p,q})(\divq + e^{-2(p+1)}\kappa\zeta_0^p x_{p,q}) \\
    &= \tau(\iota^{2q}\zeta^{p-q}y ) + \divp e^{-2(p+1)}\kappa\zeta_0^p x_{p,q}
        + \divq e^{-2(q+1)}\kappa\zeta_1^q x_{p,q} \\
    &= \tau(\iota^{2q}\zeta^{p-q}y ) + e^{-4}\kappa(\zeta_0\cwd + \zeta_1\cxwd) x_{p,q} \\
    &= \tau(\iota^{2q}\zeta^{p-q}y ) + e^{-2}\kappa x_{p,q}.
\end{align*}
Nonequivariantly this reduces to $c^{p+q} = (1+t)y$, and on fixed sets it gives
\[
    (c^p, c^q) = (2y, 2y).
\]
\end{remark}

\subsection{The $RO(\GG)$-graded part}
Although this structure is best understood using its full grading, it's interesting
to see how it restricts to the $RO(\GG)$ grading, if just to see how complicated it appears
viewed through this narrow lens.
We look just at the case $p\geq q$, the case $p < q$ being similar, by symmetry.

\begin{corollary}\label{cor:bb rog}
If $p\geq 2q$, then $\HH^{RO(\GG)}(\Qpq{2p+1}{2q+1})$ has a basis over $\HS$ given by
\begin{multline*}
    \{ 1,\ \zeta_0\cwd,\ \cwd\cxwd,\ \zeta_0\cwd[2]\cxwd,\ \cwd[2]\cxwd[2], \ldots, \\
        \zeta_0\cwd[q]\cxwd[q-1],\ \cwd[q]\divq,\ \zeta_1^{-1}\cwd[q+1]\divq,\ \ldots,\ \zeta_1^{-(p-q-1)}\cwd[p-1]\divq,\ y , \\
        \zeta_0^{p-q}x_{p,q},\ \zeta_0^{p-q-1}\cxwd x_{p,q}, \ldots,\ \zeta_0^{p-2q+1}\cxwd[q-1]x_{p,q},
        \ \zeta_1^{-(p-2q)}\cxwd[q]x_{p,q}, \\
        \zeta_1^{-(p-2q+1)}\cwd\cxwd[q]x_{p,q},\ldots,
        \ \zeta_1^{-(2p-2q-1)}\cwd[p-1]\cxwd[q]x_{p,q} \}.
\end{multline*}
If $q\leq p < 2q$, then it has a basis given by
\begin{multline*}
    \{ 1,\ \zeta_0\cwd,\ \cwd\cxwd,\ \zeta_0\cwd[2]\cxwd,\ \cwd[2]\cxwd[2], \ldots, \\
        \zeta_0\cwd[q-1]\cxwd[q],\ \cwd[q]\divq,\ \zeta_1^{-1}\cwd[q+1]\divq,\ \ldots,\ \zeta_1^{-(p-q-1)}\cwd[p-1]\divq,\ y , \\
        \zeta_0^{p-q}x_{p,q},\ \zeta_0^{p-q-1}\cxwd x_{p,q}, \ldots,\ \cxwd[p-q]x_{p,q},
        \ \zeta_0\cwd\cxwd[p-q]x_{p,q}, \\
        \cwd\cxwd[p-q+1]x_{p,q},\ \zeta_0\cwd[2]\cxwd[p-q+1]x_{p,q},\ldots,\ \cwd[2q-p]\cxwd[q]x_{p,q},\\
        \zeta_1^{-1}\cwd[2q-p+1]\cxwd[q]x_{p,q},\ldots,\ \zeta_1^{-(2p-2q-1)}\cwd[p-1]\cxwd[q]x_{p,q} \}.
\end{multline*}
In both cases, the basis element $y $ is of type $\GG/e$ while the remainder are of type $\GG/\GG$.
\end{corollary}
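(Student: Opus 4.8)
The plan is to read the claimed bases straight off the additive splitting of Corollary~\ref{cor:bb additive},
\[
    \HH^\gr(\Qpq{2p+1}{2q+1}) \iso \HH^\gr(\Xpq pq) \dirsum \susp^{2(p+q)}\HS_{\GG/e}[\zeta_0^{-1}] \dirsum \susp^\nu\HH^\gr(\Xpq pq),
\]
$\nu=(p+1)\omega+(q+1)\chi\omega-2$, treating the three summands one at a time. The organizing principle is that $RO(\Pi BU(1))$ is free of rank one over $RO(\GG)$, the quotient being generated by $\Omega_1$; in it $\cwd$ and $\zeta_1$ have coset $+1$ while $\cxwd$ and $\zeta_0$ have coset $-1$ (this uses $\zeta_0\zeta_1=\xi$ with $\xi\in\HS$), so a monomial $\zeta_0^{s}\zeta_1^{t}\cwd[a]\cxwd[b]$ lands in $RO(\GG)$ precisely when $a-b+t-s=0$. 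For each summand I would take the known $RO(\GG)$-graded $\HS$-basis --- for the two copies of $\HH^\gr(\Xpq pq)$ this is recorded in (or deducible by the same ``insert the unique power of $\zeta_0$ or $\zeta_1^{-1}$ that corrects the coset'' recipe from) \cite{CHTFiniteProjSpace} --- and rewrite its elements in terms of the geometric classes $1,\cwd,\cxwd,x_{p,q},\divq,y$.

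The middle summand contributes exactly the single type-$\GG/e$ element $y$ in each $RO(\GG)$-coset (inverting $\zeta_0$ has no effect on an $\HS_{\GG/e}$-summand). For the first summand I would use that, by Propositions~\ref{prop:bb splitting} and~\ref{prop:binate cohomology}, it is a lift through $i^*$ of the $RO(\GG)$-basis of $\HH^\gr(\Spqk pq{p+q+1})\iso\HH^\gr(\Xpq pq)\dirsum\susp^{2(p+q)}\HS_{\GG/e}$; the $RO(\GG)$-basis of $\HH^\gr(\Xpq pq)$ runs $1,\ \zeta_0\cwd,\ \cwd\cxwd,\ \zeta_0\cwd[2]\cxwd,\ \cwd[2]\cxwd[2],\dots$, alternately appending a factor $\cwd$ and a factor $\cxwd$, until the $\cxwd$-exponent reaches its cap $q$, after which one keeps appending factors of $\cwd$ and compensates with powers of $\zeta_1^{-1}$ (legitimate since $\cwd[a]\cxwd[q]$ is infinitely $\zeta_1$-divisible in $\Xpq pq$). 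As the discussion after Corollary~\ref{cor:bb additive} notes, $\cxwd[q]$ is \emph{not} $\zeta_1$-divisible in $\Qpq{2p+1}{2q+1}$ whereas $\divq$ is and restricts to $\cxwd[q]$ on $\Spqk pq{p+q+1}$, so in the lift every term past the cap must be written with $\divq$ in place of $\cxwd[q]$; since $p\ge q$ only that substitution (not the symmetric one with $\divp$) ever occurs, and this produces the portion $1,\dots,\zeta_0\cwd[q]\cxwd[q-1],\ \cwd[q]\divq,\ \zeta_1^{-1}\cwd[q+1]\divq,\dots,\zeta_1^{-(p-q-1)}\cwd[p-1]\divq$. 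The last summand is the principal ideal $\rels{x_{p,q}}$, with $\HS$-basis $\{z\,x_{p,q}\}$ for $z$ ranging over an $\HS$-basis of $\HH^\gr(\Xpq pq)$; here I would choose for each $z$ the power of $\zeta_0$ --- or of $\zeta_1^{-1}$ once that would go negative --- needed to bring $z\,x_{p,q}$ into $RO(\GG)$, using that $\cwd[p]x_{p,q}=\divp x_{p,q}$ and $\cxwd[q]x_{p,q}=\divq x_{p,q}$ are infinitely divisible by $\zeta_0$ and $\zeta_1$ respectively (noted in the proof of Theorem~\ref{thm:bb mutliplicative}), together with $\cwd[p]\cxwd[q]x_{p,q}=0$, so that all the corrections are legal.

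Because $x_{p,q}$ has coset $p-q$, the compensating power of $\zeta_0$ starts at $p-q$ and drops by one for each appended $\cxwd$, and this is where the two cases separate: when $q\le p<2q$ that power reaches $0$ at the $\cxwd$-exponent $p-q<q$, so one must pass through an intermediate phase that alternately appends $\cwd$ and $\cxwd$ before the $\cxwd$-exponent caps at $q$ (whence the run $\cwd\cxwd[p-q]x_{p,q},\,\zeta_0\cwd[2]\cxwd[p-q+1]x_{p,q},\dots,\cwd[2q-p]\cxwd[q]x_{p,q}$), while if $p\ge 2q$ the $\zeta_0$-power is still nonnegative ($=p-2q$) when the cap is reached, so that intermediate phase is empty and one goes straight into the $\zeta_1^{-1}$-compensated tail $\zeta_1^{-(p-2q)}\cxwd[q]x_{p,q},\dots,\zeta_1^{-(2p-2q-1)}\cwd[p-1]\cxwd[q]x_{p,q}$. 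Finally I would check that the listed elements are $\HS$-linearly independent --- immediate, since under the splitting they occupy distinct graded pieces of the three summands --- and that each list has $2(p+q)$ elements of type $\GG/\GG$ together with $y$, matching the rank already recorded in the introduction; with the additive splitting this shows they form a basis. The main obstacle is the bookkeeping: pinning down exactly where the $\zeta_0$-to-$\zeta_1^{-1}$ crossovers occur in each copy of $\HH^\gr(\Xpq pq)$ so that the breakpoints $q$, $p-q$, $2q-p$, $p-2q$, $2p-2q-1$ come out precisely as written, and keeping straight when a bare $\cwd[p]$ or $\cxwd[q]$ must be replaced by $\divp$ or $\divq$ versus when its product with $x_{p,q}$ already supplies the divisibility.
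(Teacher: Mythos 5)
Your derivation is correct and is precisely the argument the paper leaves implicit: the corollary is stated without proof as pure bookkeeping on top of Corollary~\ref{cor:bb additive}, Proposition~\ref{prop:binate cohomology}, and Theorem~\ref{thm:bb mutliplicative}, and your coset-of-$\Omega_1$ accounting, the $\cxwd[q]\mapsto\divq$ substitution forced by which elements are actually $\zeta_1$-divisible in the quadric, and the location of the $\zeta_0$-to-$\zeta_1^{-1}$ crossover (which is exactly where the two cases $p\geq 2q$ and $q\leq p<2q$ separate) reproduce the stated lists and the count of $2(p+q)$ generators of type $\GG/\GG$ plus $y$. In fact your (case-independent) description of the first summand's basis exposes an apparent typo in the paper's second list, where the entry $\zeta_0\cwd[q-1]\cxwd[q]$ (which does not lie in an $RO(\GG)$ grading) should read $\zeta_0\cwd[q]\cxwd[q-1]$ as in the first list.
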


We illustrate these bases by showing the locations of the basis elements in $RO(\GG)$ in two cases,
$\Qpq {11}{7}$ and $\Qpq{15}{7}$.
In the following diagrams, a basis element in grading $a + b\sigma$ is shown at $(a,b)$, and the grid lines
are spaced every two.

\[
\begin{tikzpicture}[scale=0.4]
	\draw[step=1cm, gray, very thin] (-1.8, -1.8) grid (10.8,8.8);
	\draw[thick] (-2, 0) -- (11, 0);
	\draw[thick] (0, -2) -- (0, 9);

 	\node[below] at (4.5, -2) {$\HH^{RO(\GG)}(\Qexpq{11}{7})$};

    \fill (0,0) circle(0.25cm);
    \fill (0,1) circle(0.25cm);
    \fill (1,1) circle(0.25cm);
    \fill (1,2) circle(0.25cm);
    \fill (2,2) circle(0.25cm);
    \fill (2,3) circle(0.25cm);
    \fill (3,3) circle(0.25cm);
    \fill (4,3) circle(0.25cm);

    \draw (8,0) circle(0.25cm);
    \draw[dashed] (10,-2) -- (-1,9);

    \fill (3,6) circle(0.25cm);
    \fill (4,6) circle(0.25cm);
    \fill (5,6) circle(0.25cm);
    \fill (5,7) circle(0.25cm);
    \fill (6,7) circle(0.25cm);
    \fill (7,7) circle(0.25cm);
    \fill (8,7) circle(0.25cm);
    \fill (9,7) circle(0.25cm);

    \node[above right] at (8,0) {$y$};
    \node[above] at (3,6) {$\zeta_0^2 x_{5,3}$};

\end{tikzpicture}
\quad
\begin{tikzpicture}[scale=0.4]
	\draw[step=1cm, gray, very thin] (-1.8, -1.8) grid (14.8,9.8);
	\draw[thick] (-2, 0) -- (15, 0);
	\draw[thick] (0, -2) -- (0, 10);

 	\node[below] at (6.5, -2) {$\HH^{RO(\GG)}(\Qexpq{15}{7})$};

    \fill (0,0) circle(0.25cm);
    \fill (0,1) circle(0.25cm);
    \fill (1,1) circle(0.25cm);
    \fill (1,2) circle(0.25cm);
    \fill (2,2) circle(0.25cm);
    \fill (2,3) circle(0.25cm);
    \fill (3,3) circle(0.25cm);
    \fill (4,3) circle(0.25cm);
    \fill (5,3) circle(0.25cm);
    \fill (6,3) circle(0.25cm);

    \draw (10,0) circle(0.25cm);
    \draw[dashed] (12,-2) -- (0,10);

    \fill (3,8) circle(0.25cm);
    \fill (4,8) circle(0.25cm);
    \fill (5,8) circle(0.25cm);
    \fill (7,7) circle(0.25cm);
    \fill (8,7) circle(0.25cm);
    \fill (9,7) circle(0.25cm);
    \fill (10,7) circle(0.25cm);
    \fill (11,7) circle(0.25cm);
    \fill (12,7) circle(0.25cm);
    \fill (13,7) circle(0.25cm);

    \node[above right] at (10,0) {$y$};
    \node[above] at (3,8) {$\zeta_0^4 x_{7,3}$};

\end{tikzpicture}
\]
The solid dots indicate basis elements of type $\GG/\GG$ while the open dot
is the basis element $y$ of type $\GG/e$. Note that we could put a basis element
of type $\GG/e$ anywhere along the dotted line by replacing $y$ by
$\iota^k y$ for any $k\in \Z$, so its exact position on that line is arbitrary.

To describe the multiplicative structure of $\HH^{RO(\GG)}(\Qpq{2p+1}{2q+1})$
purely in terms of this basis would be messy, to say the least.
But calculations in this ring are easily done viewing it as a subring of the one
graded on $RO(\Pi BU(1))$ described
in Theorem~\ref{thm:bb mutliplicative}.
This is similar to the comparison we gave in \cite[\S5.2]{CHTFiniteProjSpace}
of our calculation of the cohomology of finite projective spaces to
Gaunce Lewis's $RO(\GG)$-graded structure in \cite{LewisCP}, but 
the contrast would be even starker.

\section{Quadrics of type (B,D) and (D,B)}

Because there is a $\GG$-diffeomorphism $\Qpq{2p}{2q+1} \homeo \Qpq{2q+1}{2p}$,
the cohomology of quadrics of type (B,D) follows from that of type (D,B),
so we concentrate here on the latter.
As mentioned earlier, we will grade all cohomology on $RO(\Pi BU(1))$,
ignoring for now the fact that $RO(\Pi\Qpq{2}{2q+1})$ is generally larger;
we will return to deal with that in the third paper in this series.

Because $\Qpq 0{2q+1}$ has trivial $\GG$-action and free nonequivariant cohomology, we get
\[
    \HH^\gr(\Qpq 0{2q+1}) \iso H^*(\eQp{2q+1})\tensor \HS[\zeta_0^{\pm 1}].
\]
We will assume for the remainder of this section that $p>0$.

Once again, our calculations are based on a cofibration.
Define
\begin{alignat*}{2}
    i\colon \Xpq pq &\includesin \Qpq{2p}{2q+1}, &\qquad i[\vec x:\vec y] &= [\vec x:\vec 0:\vec y:0:\vec 0] \\
\intertext{and, as in (\ref{def:j}), let}
    j\colon \Xpq pq &\includesin \Qpq{2p}{2q+1}, & i[\vec u:\vec v] &= [\vec 0:\vec u:\vec 0:0:\vec v].
\end{alignat*}
The normal bundle to $j$ is now
\[
    \nu = (2p - p)\omega \dirsum (2q+1 - q)\chi\omega\dual - O(2) = p\omega \dirsum (q+1)\chi\omega - O(2)
\]
which has dimension $p\omega + (q+1)\chi\omega - 2$, which we will also write as $\nu$.

As in the (B,B) case, we define
\[
    x_{p,q} = [j(\Xpq pq)]^* \in \HH^\nu(\Qpq{2p}{2q+1}).
\]

\begin{proposition}\label{prop:bd cofibration}
We have a cofibration sequence
\[
    \Xpq pq_+ \xrightarrow{i} (\Qpq{2p}{2q+1})_+ \to \susp^\nu j(\Xpq pq)_+
\]
where $\nu$ is the normal bundle to $j$.
\end{proposition}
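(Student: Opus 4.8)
The plan is to follow the proof of Proposition~\ref{prop:bb cofibration} almost verbatim, the one genuinely new point being a small observation about the extra coordinate $w$. First I would record that on $\Qpq{2p}{2q+1}$ the defining equation $\sum_i x_iu_i + \sum_j y_jv_j + w^2 = 0$ forces $w = 0$ as soon as $\vec u = \vec 0$ and $\vec v = \vec 0$. Hence the locus in the quadric where $(\vec u,\vec v) = 0$ is exactly $i(\Xpq pq) = \{[\vec x:\vec 0:\vec y:0:\vec 0]\}$, so that
\[
    \Qpq{2p}{2q+1} \setminus i(\Xpq pq) = \bigl\{\, [\vec x:\vec u:\vec y:w:\vec v] \in \Qpq{2p}{2q+1} \mid (\vec u,\vec v) \neq 0 \,\bigr\}.
\]
(Contrast the (B,B) case, where $w$ survives in the subspace $\Spqk pq{p+q+1}$ that is collapsed; it is the quadric relation that here eliminates the $w$-direction over $\vec u = \vec v = \vec 0$, which is what singles out $i(\Xpq pq)$ as the correct subspace and makes the cofiber index match the (B,B) pattern.)

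On this complement the assignment $[\vec x:\vec u:\vec y:w:\vec v] \mapsto [\vec 0:\vec u:\vec 0:0:\vec v]$ is a well-defined $\GG$-map onto $j(\Xpq pq)$ — equivariant because $\vec u$ and $\vec v$ lie in the trivial and sign summands respectively and discarding $\vec x$, $\vec y$, $w$ commutes with the action of $t$. Exactly as in the (B,B) case, I would then identify this map with the normal bundle $\nu \iso p\omega \dirsum (q+1)\chi\omega - O(2)$ of $j$. Over the standard chart $u_k \neq 0$ of $j(\Xpq pq)$ (and symmetrically over $v_l \neq 0$), normalizing $u_k = 1$, the fiber over a base point is cut out of the affine space on the remaining coordinates $\vec x, \vec y, w$ by the single defining equation, which is monic linear in $x_k$; so the fiber is an affine space of the expected dimension, the family is an open neighbourhood of $j(\Xpq pq)$ with $j$ itself as zero section, and the equivariant tubular neighbourhood theorem identifies it with $\nu$.

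With $\Qpq{2p}{2q+1} \setminus i(\Xpq pq)$ exhibited as an open tubular neighbourhood of $j(\Xpq pq)$, collapsing its complement $i(\Xpq pq)$ — which is a closed smooth $\GG$-submanifold, hence a $\GG$-cofibration — yields $\Qpq{2p}{2q+1} / i(\Xpq pq) \homeo \susp^\nu j(\Xpq pq)_+$ and the asserted cofibration sequence. I expect the only step requiring any care to be the identification of the projection with a genuine vector bundle rather than merely an affine bundle with a chosen section; but this is forced by the zero section $j$ and is exactly the point invoked without comment in Proposition~\ref{prop:bb cofibration}, so there should be no real obstacle beyond bookkeeping with the coordinates.
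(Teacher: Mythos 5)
Your proof is correct and is essentially the paper's argument: the paper also removes $i(\Xpq pq)$, projects the complement onto $j(\Xpq pq)$ by $[\vec x:\vec u:\vec y:w:\vec v]\mapsto[\vec 0:\vec u:\vec 0:0:\vec v]$, and identifies that projection with the normal bundle $\nu$. You merely make explicit two points the paper leaves implicit, namely that the quadric equation forces $w=0$ when $\vec u=\vec v=0$ (so the complement is exactly where the projection is defined) and the chart-by-chart check that the projection is a vector bundle with zero section $j$.
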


\begin{proof}
The projection
\[
   \Qpq{2p}{2q+1} \setminus i(\Xpq pq) \to j(\Xpq pq),\quad[\vec x:\vec u:\vec y:w:\vec v] \mapsto [\vec 0:\vec u:\vec 0:0:\vec v]
\]
can be identified with the normal bundle $\nu$.
\end{proof}

This implies a long exact sequences in cohomology from which we want to get a split short exact sequence.
In order to do that, we first notice the following. Let
\[
    k\colon \Qpq{2p-1}{2q+1} \to \Qpq{2p}{2q+1}
\]
be the inclusion defined by
\[
    k[\vec x:z:\vec u:\vec y:w:\vec v] = [\vec x:z:z:\vec u:\vec y:w:\vec v].
\]

\begin{proposition}\label{prop:db divisible}
In $\HH^\gr(\Qpq{2p}{2q+1})$,
\[
    \divp = \divp[2p] = \divp[2p,2q+1] := k_!(\divp[2p-1,2q+1]) = \cwd[p] - e^{-2(q+1)}\kappa\zeta_1^q\cwd x_{p,q}
\]
is infinitely divisible by $\zeta_0$, and
\[
    \divq = \divq[2q+1] = \divq[2p,2q+1] := \cxwd[q] - e^{-2p}\kappa\zeta_0^{p-1} x_{p,q}
\]
is infinitely divisible by $\zeta_1$.
\end{proposition}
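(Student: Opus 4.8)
The plan is to reduce the statement to Proposition~\ref{prop:general divisibility} by computing the restrictions $\eta_0$ and $\eta_1$ to the two fixed-point components and exhibiting each of $\eta_0(\divp)$ and $\eta_1(\divq)$ as a transfer, exactly as in the (B,B) case. First I would record the fixed-point components: $\Qpq{2p}{2q+1}^\GG = \Qexp{2p} \disjunion \Qexq{2q+1}$, with $\zeta_1$ invertible on the first (trivial) component $\Qexp{2p}$ and $\zeta_0$ invertible on the second $\Qexq{2q+1}$, and with known cohomologies $\HS[\zeta_1^{\pm 1},c,y]/\rels{c^p - 2cy,\ y^2 - \epsilon c^{p-1}y}$ (type D, so $\grad y = 2(p-1)$) and $\HS[\zeta_0^{\pm 1},c,y]/\rels{c^q - 2y,\ y^2}$ (type B) respectively. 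I would then compute $\eta_0(x_{p,q})$ via the excess intersection formula (Proposition~\ref{prop:excess formula}), just as in Lemma~\ref{lem:bb restriction}: the intersection of $j(\Xpq pq)$ with $\Qexp{2p}$ is $\Xp p \subset \Qexp{2p}$, nontransverse with excess normal bundle $(q+1)\chi\omega\dual$, so $\eta_0(x_{p,q}) = e((q+1)\chi\omega\dual)\cdot [\Xp p]^* = (e^2+\xi c)^{q+1}\zeta_1^{-(q+1)}\cdot[\Xp p]^*$; similarly $\eta_1(x_{p,q})$ comes from the intersection with $\Qexq{2q+1}$, which has excess bundle $p\omega\dual$ and gives a factor $(e^2)^p$ up to units (here $\omega$ is trivial on this component, so $e(\omega\dual)|_{\Qexq{2q+1}} = e$).

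The second step is the bookkeeping of Euler classes: since $\cwd[p]$ restricts (with a grading shift by the invertible $\zeta_1$) to $c^p$ on $\Qexp{2p}$, and on a type D quadric $c^p = 2cy$, whereas on the type B component $c^q = 2y$; and because $e^{-n}\kappa\cdot\xi = 0$, the factor $(e^2+\xi c)^{q+1}$ collapses after multiplication by $e^{-2(q+1)}\kappa$ to just $\kappa$ (so the $\xi c$ part dies). Putting this together on the first component, $\eta_0(\divp) = \eta_0(\cwd[p]) - e^{-2(q+1)}\kappa\zeta_1^q\eta_0(\cwd x_{p,q})$ should become $2\zeta_1^{?}\,cy - \kappa\zeta_1^{?}\,cy = g\zeta_1^{?}\,cy = \tau(\text{unit}\cdot cy)$ for an appropriate power of $\zeta_1$ (fixed by matching gradings); the key identity used is $g = 2 - \kappa$ and $\tau\rho = g$, so that $(2-\kappa)z = \tau(\rho(z)/g\cdot g) $ — more precisely $gz = \tau(\rho z)$ whenever $z$ is already in the image of $\rho$, which $cy$ is since the component has trivial action. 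The analogous computation on the other component gives $\eta_1(\divq) = 2\zeta_0^{?}y - \kappa\zeta_0^{?}y = g\zeta_0^{?}y = \tau(\zeta^{?}y)$. Then Proposition~\ref{prop:general divisibility} applies: $\eta_0(\divp)$ is a transfer, so $\divp$ is infinitely divisible by $\zeta_0$, and $\eta_1(\divq)$ is a transfer, so $\divq$ is infinitely divisible by $\zeta_1$. I would also note the boundary cases: if $q = 0$ the second component $\Qexp 1 = \emptyset$ (or degenerate), so every class is automatically $\zeta_1$-divisible; the $p>0$ standing assumption of the section handles the other side.

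I expect the main obstacle to be verifying that $\divp$ as \emph{defined} (namely $k_!(\divp[2p-1,2q+1])$) actually equals the stated formula $\cwd[p] - e^{-2(q+1)}\kappa\zeta_1^q\cwd x_{p,q}$, since this requires understanding the pushforward $k_!$ along the singular-type inclusion $k\colon \Qpq{2p-1}{2q+1}\hookrightarrow\Qpq{2p}{2q+1}$ and how it interacts with $x_{p,q}$ and with the element $\divp[2p-1,2q+1]$ from Theorem~\ref{thm:bb mutliplicative} (one degree down in the first index). The cleanest route is probably to compute $\eta k_!$ on the relevant classes directly — using that $k_!$ followed by restriction to fixed points is governed by the fixed-point geometry of $k$, an inclusion of a type B quadric into a type D quadric — rather than trying to identify $k_!(\divp[2p-1,2q+1])$ on the nose in $B^\gr$. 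In fact, for the divisibility statement alone one only needs $\eta_0(\divp)$ and $\eta_1(\divq)$, so it suffices to show $\eta_0(k_!(\divp[2p-1,2q+1]))$ is a transfer and $\eta_1(\divq)$ is a transfer; the formula for $\divp$ in terms of $\cwd[p]$, $\cwd$, and $x_{p,q}$ can be established separately (and will be needed later for the multiplicative theorem), but is not logically required here. So I would present the proof in two halves: (i) handle $\divq$ exactly as in Proposition~\ref{prop:divisible elements} since its formula is explicit; (ii) for $\divp$, compute $\eta_0$ of the defining expression $k_!(\divp[2p-1,2q+1])$ using functoriality of restriction-to-fixed-points through $k$ and the already-known restriction of $\divp[2p-1,2q+1]$ in the (B,B) setting, obtaining a transfer, and then invoke Proposition~\ref{prop:general divisibility}.
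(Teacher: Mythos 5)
Your treatment of $\divq$ coincides with the paper's, which simply invokes Proposition~\ref{prop:general divisibility} ``by the same argument as in Proposition~\ref{prop:divisible elements}''; your fixed-point computation on the type~B component $\Qexq{2q+1}$, with $e^{-n}\kappa\cdot\xi = 0$ killing the $\xi$-terms so that $\eta_1(\divq)$ becomes $(2-\kappa)\zeta_0^{?}y = g\zeta_0^{?}y = \tau(\cdot)$, is exactly that argument. For $\divp$, however, the paper takes a much shorter route than either of the ones you consider: since $\divp[2p]$ is \emph{defined} as $k_!(\divp[2p-1,2q+1])$, and $\divp[2p-1,2q+1]$ is already known to be infinitely divisible by $\zeta_0$ in $\HH^\gr(\Qpq{2p-1}{2q+1})$ (Proposition~\ref{prop:divisible elements} applies because $\Qpq{2p-1}{2q+1}$ is of type (B,B)), the divisibility of the pushforward is immediate from the fact that $k_!$ is a map of $\HH^\gr(BU(1))$-modules: $k_!(\zeta_0^{-n}\divp[2p-1,2q+1])$ exhibits the required $\zeta_0^n$-divisions. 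No second application of Proposition~\ref{prop:general divisibility} and no computation of $\eta_0$ on $\Qpq{2p}{2q+1}$ are needed. Your alternative of computing $\eta_0(k_!(\divp[2p-1,2q+1]))$ via compatibility of restriction-to-fixed-points with $k_!$ would likely work, but it requires a base-change argument the paper never sets up; and your other alternative, computing $\eta_0(\divp)$ directly from the displayed formula, presupposes that formula. (A small inaccuracy along the way: $e(\omega\dual)$ restricted to the $\Xq{}$-component is not $e$ up to units but $(e^2+\xi c)\zeta_0^{-1}$; this is harmless only because the $\xi$-terms are killed by $\kappa$.)

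The displayed formula is the one genuine gap. The proposition asserts the equality $k_!(\divp[2p-1,2q+1]) = \cwd[p] - e^{-2(q+1)}\kappa\zeta_1^q\cwd x_{p,q}$ as part of its content, and you explicitly defer it. The paper proves it with two geometric identifications plus the module structure of $k_!$: the section $x_p - u_p$ of $\omega\dual$ has zero locus exactly $k(\Qpq{2p-1}{2q+1})$, so $k_!(1) = \cwd$; and $k_!(x_{p-1,q})$ and $\cwd x_{p,q} = j_!(\cwd)$ are both represented by $\Xpq{p-1}{q}$, so they are equal. Applying $k_!$ to $\divp[2p-1,2q+1] = \cwd[p-1] - e^{-2(q+1)}\kappa\zeta_1^q x_{p-1,q}$ then yields the stated expression. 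You correctly identified this identification as the main obstacle, but a complete proof of the proposition has to supply it rather than set it aside.
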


Note that the formula for $\divp[2p]$
has an extra factor of $\cwd$ in its second term, compared to the formula for $\divp[2p+1]$.
Likewise, for quadrics of type (B,D), the formula for $\divq[2q]$ would have
an extra factor of $\cxwd$ in its second term.

\begin{proof}
Consider the section of $\omega\dual$ given by $x_p - u_p$ in the usual coordinates
on $\Qpq{2p}{2q+1}$. The zero locus of this section is exactly $k(\Qpq{2p-1}{2q+1})$,
hence
\[
    \cwd = k_!(1) = [\Qpq {2p-1}{2q+1}]^*.
\]
We also have that $\cwd x_{p,q} = j_!(\cwd)$ is represented by $\Xpq{p-1}{q}$, as is
$k_!(x_{p-1,q})$, hence
\[
    k_!(x_{p-1,q}) = \cwd x_{p,q}.
\]
From this it follows that
\[
    k_!(\divp[2p-1]) = k_!(\cwd[p-1] - e^{-2(q+1)}\kappa\zeta_1^q x_{p-1,q})
    = \cwd[p] - e^{-2(q+1)}\kappa\zeta_1^q\cwd x_{p,q},
\]
verifying the alimed value of this pushforward, which we take as the definition of $\divp[2p]$.
We know that $\divp[2p-1]$ is infinitely divisible by $\zeta_0$ in the cohomology
of $\Qpq{2p-1}{2q+1}$, hence the same is true for its pushforward $\divp[2p]$.

That $\divq[2q+1]$ is infinitely divisible by $\zeta_1$ 
follows from Porposition~\ref{prop:general divisibility} by the same argument as in
Proposition~\ref{prop:divisible elements}.
\end{proof}

\begin{remark}\label{rem:db divp simplifies}
When $p = 1$, the formula for $\divp[2]$ simplifies:
We know that $\cwd x_{1,q} = j_!(\cwd)$ is divisible by $\zeta_0$
because $\cwd$ is divisible by $\zeta_0$ in the cohomology of $\Xpq 1q$. Hence,
\[
    \divp[2] = \cwd - e^{-2(q+1)}\kappa\zeta_1^q\cwd x_{1,q}
    = \cwd - e^{-2(q+1)}\kappa\xi\zeta_1^{q-1}\zeta_0^{-1}\cwd x_{1,q}
    = \cwd
\]
because $e^{-2(q+1)}\kappa\xi = 0$.
(In the case $q = 0$, we are also using that $\zeta_1$ is invertible in the cohomology
of $\Xp{}$.)
\end{remark}

\begin{proposition}\label{prop:db splitting}
There is a split short exact sequence
\[
    0 \to
    \susp^\nu \HH^\gr(\Xpq pq) \xrightarrow{j_!}
    \HH^\gr(\Qpq{2p}{2q+1}) \xrightarrow{i^*}
    \HH^\gr(\Xpq pq)
    \to 0
\]
where $\nu = p\omega + (q+1)\chi\omega - 2$.
\end{proposition}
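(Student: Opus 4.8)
The plan is to copy the template of Proposition~\ref{prop:bb splitting}, the argument being a little shorter here since the quotient term $\HH^\gr(\Xpq pq)$ carries no summands of type $\GG/e$. Applying $\HH^\gr(-)$ to the cofibration of Proposition~\ref{prop:bd cofibration} gives a long exact sequence; the cofiber term $\susp^\nu j(\Xpq pq)_+$ contributes $\susp^\nu\HH^\gr(\Xpq pq)$ via the Thom isomorphism, and the map it induces into $\HH^\gr(\Qpq{2p}{2q+1})$ is the Gysin map $j_!$. To turn this into the stated short exact sequence I would show that $i^*$ is surjective, which forces all connecting homomorphisms to vanish; the resulting short exact sequence of $\HS$-modules then splits because the quotient $\HH^\gr(\Xpq pq)$ is free over $\HS$ by the finite projective space calculation of \cite{CHTFiniteProjSpace}.

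To see that $i^*$ is onto, I would argue exactly as in the (B,B) case. The inclusions $\Xpq pq\xrightarrow{i}\Qpq{2p}{2q+1}\includesin BU(1)$ make $i^*$ a map of $\HH^\gr(BU(1))$-modules, so it already hits every generator of $\HH^\gr(\Xpq pq)$ that is pulled back from $BU(1)$; the only other generators are the $\HH^\gr(BU(1))$-multiples of $\zeta_0^{-k}\cwd[p]$ and $\zeta_1^{-k}\cxwd[q]$ with $k>0$. Since $i(\Xpq pq)$ and $j(\Xpq pq)$ are disjoint, $i^*(x_{p,q}) = 0$, so $i^*(\divp) = \cwd[p]$ and $i^*(\divq) = \cxwd[q]$. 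Proposition~\ref{prop:db divisible} says $\divp$ is infinitely divisible by $\zeta_0$ and $\divq$ by $\zeta_1$ in $\HH^\gr(\Qpq{2p}{2q+1})$, so applying $i^*$ to the quotients $\zeta_0^{-k}\divp$ and $\zeta_1^{-k}\divq$ yields
\[
    \zeta_0^{-k}\cwd[p] = i^*\bigl(\zeta_0^{-k}\divp\bigr), \qquad \zeta_1^{-k}\cxwd[q] = i^*\bigl(\zeta_1^{-k}\divq\bigr),
\]
completing the proof of surjectivity.

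There is no real obstacle: once Proposition~\ref{prop:db divisible} and the freeness of $\HH^\gr(\Xpq pq)$ over $\HS$ are granted, the argument is pure bookkeeping, and the entire point is that the divisible lifts $\divp$ and $\divq$ of the Euler powers $\cwd[p]$ and $\cxwd[q]$ have been arranged to exist. The one spot calling for a moment's care is the degenerate case $q = 0$ (recall $p>0$ throughout the section), where $\cxwd[q] = 1$ and the $\zeta_1^{-k}\cxwd[q]$ generators are absent; this is disposed of exactly as the corresponding degeneracy in the (B,B) argument and is if anything simpler.
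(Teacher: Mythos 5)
Your proposal is correct and follows essentially the same route as the paper: the paper's proof of this proposition simply says that surjectivity of $i^*$ reduces, as in the (B,B) case, to hitting the elements $\zeta_0^{-k}\cwd[p]$ and $\zeta_1^{-k}\cxwd[q]$, which follows from the divisible elements $\divp$ and $\divq$ of Proposition~\ref{prop:db divisible}. You have merely written out the details the paper leaves implicit (vanishing of $i^*(x_{p,q})$ by disjointness, the module structure over $\HH^\gr(BU(1))$, and splitting from freeness of the quotient), all of which match the argument given for Proposition~\ref{prop:bb splitting}.
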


\begin{proof}
We need to show that $i^*$ is surjective.
As in Proposition~\ref{prop:bb splitting}, this comes down to showing that the
elements $\zeta_0^{-k}\cwd[p]$ and $\zeta_1^{-k}\cxwd[q]$, $k\geq 1$,
are in the image of $i^*$,
which follows from the existence of the divisible elements $\divp$ and $\divq$.
\end{proof}

\begin{corollary}\label{cor:db additive}
Additively,
\[
    \HH^\gr(\Qpq{2p}{2q+1}) \iso \HH^\gr(\Xpq pq) \dirsum \susp^\nu\HH^\gr(\Xpq pq)
\]
where $\nu = p\omega + (q+1)\chi\omega - 2$.
\qed
\end{corollary}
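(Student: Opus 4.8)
The plan is to derive the corollary directly from Proposition~\ref{prop:db splitting}, which already supplies the split short exact sequence
\[
    0 \to \susp^\nu \HH^\gr(\Xpq pq) \xrightarrow{j_!} \HH^\gr(\Qpq{2p}{2q+1}) \xrightarrow{i^*} \HH^\gr(\Xpq pq) \to 0,
\]
with $\nu = p\omega + (q+1)\chi\omega - 2$. First I would recall that a split short exact sequence of graded $\HS$-modules (equivalently, of graded Mackey-functor-valued $\HS$-modules) expresses the middle term as the internal direct sum of the image of the injection and a lift of the quotient. Applying this here identifies $\HH^\gr(\Qpq{2p}{2q+1})$ with $j_!\bigl(\susp^\nu\HH^\gr(\Xpq pq)\bigr)\dirsum s\bigl(\HH^\gr(\Xpq pq)\bigr)$ for any section $s$ of $i^*$, which is precisely the asserted isomorphism once we suppress the labels $j_!$ and $s$.

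The only point that needs a word of justification is why the sequence splits \emph{as $\HS$-modules}, which is what makes the decomposition meaningful at the level of the cohomology of a point. This is because $\HH^\gr(\Xpq pq)$, the equivariant cohomology of the projective space $\PP(\Cpq pq)$ with its rotation action, is a free $\HS$-module in the sense of Section~\ref{sec:mackey}: it is a direct sum of shifted copies of $\HS$ (it has no summands of type $\GG/e$), as established in the structural results recalled from \cite{CH:QuadricsI} and \cite{Co:InfinitePublished}. A surjection of graded $\HS$-modules onto a direct sum of shifted copies of $\HS$ admits a section — one simply chooses, for each summand $\susp^\alpha\HS$ of the target, a preimage of the canonical generator and extends $\HS$-linearly, using that $\Hom_{\sorb\GG}(\Mackey A,\Mackey T)\iso\Mackey T(\GG/\GG)$ — so $i^*$ splits. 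This observation is exactly the one invoked in the proof of Proposition~\ref{prop:db splitting} to pass from the long exact sequence to a split short exact sequence, so nothing new is required.

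There is, in effect, no real obstacle: the content of the computation lives entirely in Proposition~\ref{prop:db cofibration} (the cofibration), Proposition~\ref{prop:db divisible} (the divisible elements $\divp$ and $\divq$, which force the long exact sequence to break into short exact pieces), and Proposition~\ref{prop:db splitting} (the splitting). The corollary is a bookkeeping step collecting these into a single additive statement, and I would present it as such — a one-line deduction, with the freeness remark above included only to make clear that the splitting is $\HS$-linear and hence that the displayed isomorphism is an isomorphism of $\HS$-modules, not merely of graded abelian groups.
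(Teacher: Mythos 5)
Your proposal is correct and matches the paper exactly: the paper gives no separate proof of this corollary (it is stated with a \qed), treating it as an immediate consequence of the split short exact sequence in Proposition~\ref{prop:db splitting}, and your justification of the $\HS$-linearity of the splitting via freeness of $\HH^\gr(\Xpq pq)$ is the same observation the paper makes in the analogous Proposition~\ref{prop:bb splitting}. Nothing further is needed.
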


We complete the picture with the multiplicative structure.
We start once again with the computation of $\divp\divq$.

\begin{lemma}\label{lem:db divp divq}
In the cohomology of $\Qpq{2p}{2q+1}$, $\divp\divq = \tau(\iota^{-2})\zeta_1 x_{p,q}$.
\end{lemma}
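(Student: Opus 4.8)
The plan is to follow the template of Lemma~\ref{lem:divp divq} in the (B,B) case: first pin down the grading of $\divp\divq$, then use the additive splitting of Corollary~\ref{cor:db additive} to write it as an unknown linear combination of the finitely many basis elements of that grading, and finally determine the coefficients by applying $\rho$ (restriction to the free level) and the fixed-point map $(-)^\GG$. The product $\divp\divq$ lives in grading $p\omega + q\chi\omega$ (since $\divp$ has grading $p\omega$ and $\divq$ has grading $q\chi\omega$, and $\omega + \chi\omega$ carries the unit $\xi = e^2 + \ldots$ — more precisely the gradings add to $p\omega + q\chi\omega$). Using Corollary~\ref{cor:db additive}, the cohomology in this grading is spanned over $\HS$ by a handful of elements coming from $\HH^\gr(\Xpq pq)$ and a handful from $\susp^\nu\HH^\gr(\Xpq pq)$ (the $x_{p,q}$-multiples); among these the candidate terms will be $\tau(\iota^{-2}\zeta_1 x_{p,q})$ together with $\kappa$-multiples of $x_{p,q}$-type classes and possibly a $\tau$ of a class from $\Xpq pq$. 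I expect the grading bookkeeping — confirming exactly which basis elements occupy grading $p\omega+q\chi\omega$, and hence which coefficients $a,b,c,\ldots$ can be nonzero — to be the fiddliest part, especially because $\divp[2p]$ carries the extra factor of $\cwd$ noted after Proposition~\ref{prop:db divisible}.

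First I would compute $\rho(\divp)$ and $\rho(\divq)$: since $\rho$ kills $\kappa$-multiples (as $\rho(\kappa) = 0$) and $\rho(\cwd) = \zeta c$, $\rho(\cxwd) = \iota^{-2}\zeta^{-1}\xi c = \ldots$ (the analogous formula for $\chi\omega$), we get $\rho(\divp\divq)$ as an explicit unit multiple of $c^{p+q}$. I would then compute $\rho$ of each candidate basis element: $\rho(\tau(\iota^{-2}\zeta_1 x_{p,q}))$ via $\rho\tau = 1+t$ and the known value $\rho(x_{p,q}) = \iota^{?}\zeta^{?}cy$ (the (D,B)-analogue of the formula $\rho(x_{p,q}) = \iota^{2(q+1)}\zeta^{p-q}cy$ from the (B,B) case), while $\rho$ of each pure $\kappa$-multiple vanishes. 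Comparing with $\rho(\divp\divq)$ pins down the coefficient of the $\tau$-term. Then I would take fixed points $(-)^\GG$: here $\Qpq{2p}{2q+1}^\GG = \Qexp{2p} \disjunion \Qexq{2q+1}$ has two components, and $(\divp)^\GG$, $(\divq)^\GG$ are read off from Lemma~\ref{lem:bb restriction}-type computations (the restriction-to-fixed-points formulas for $\cwd$, $\cxwd$, $x_{p,q}$ in the (D,B) setting). Since $\divp$ restricts trivially on the component where $\zeta_0$ is invertible and $\divq$ restricts trivially on the other, I expect $(\divp\divq)^\GG = 0$ on both components by the same mechanism as in Lemma~\ref{lem:divp divq}; matching this against $(-)^\GG$ of the remaining candidate terms (the $\kappa$-multiples of $x_{p,q}$-classes, which have nonzero fixed points) forces their coefficients to vanish.

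The main obstacle will be getting the fixed-point restriction formulas exactly right in the (D,B) case — in particular an analogue of Lemma~\ref{lem:bb restriction} computing $\eta(x_{p,q})$ for $\Qpq{2p}{2q+1}$ via the excess intersection formula (Proposition~\ref{prop:excess formula}), since the excess normal bundles for the intersections of $j(\Xpq pq)$ with the two fixed components differ from the (B,B) case (one component is $\Qexp{2p}$ of type D, the other $\Qexq{2q+1}$ of type B). If such a lemma has already been established in the surrounding text (as Lemma~\ref{lem:bb restriction} was for (B,B)), this is routine; otherwise I would first record it. With those restriction formulas in hand, both $\rho$ and $(-)^\GG$ are elementary unit-bookkeeping, and the identity $\divp\divq = \tau(\iota^{-2})\zeta_1 x_{p,q} = \tau(\iota^{-2}\zeta_1 x_{p,q})$ (using that $\zeta_1$ and $x_{p,q}$ are pulled from the base, so commute past $\tau$ appropriately, and $\rho(\iota) = \iota$) falls out. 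A sanity check: applying $\rho$ and cancelling units should recover a nonequivariant relation of the form $c^{p+q} = (1+t)(\text{something})$ or a simple multiple thereof, consistent with the known cohomology of the underlying type-B quadric $\Qexp{2(p+q)+1}$, where $c^{p+q}$ is a power that is a multiple of the top class.
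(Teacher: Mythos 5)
Your proposal follows the paper's proof essentially verbatim: identify the grading $p\omega+q\chi\omega$, expand $\divp\divq$ as an $\Z$-linear combination of the basis elements in that grading (the paper pins these down as $\tau(\iota^{-2})\zeta_1 x_{p,q}$, $e^{-2}\kappa\cwd x_{p,q}$, and $e^{-4}\kappa\zeta_0\cwd[2]x_{p,q}$), then determine the coefficients by applying $\rho$ (using $\rho(x_{p,q})=\iota^{2(q+1)}\zeta^{p-q-1}y$ and the vanishing of $\rho$ on $\kappa$-multiples) and the fixed-point map (where the product vanishes because the two factors restrict to complementary idempotent-like classes on the two components). The only detail you leave open --- exactly which basis elements occupy that grading and the precise fixed-point values of $\cwd$, $\cxwd$, $x_{p,q}$ --- is the same bookkeeping the paper carries out, so the approach is sound and matches.
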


\begin{proof}
The product $\divp\divq$ lies in grading $p\omega + q\chi\omega$.
From the additive calculation we have done so far, we can see that,
in that grading, all elements are linear combinations of
$\tau(\iota^{-2})\zeta_1 x_{p,q}$, $e^{-2}\kappa\cwd x_{p,q}$, and $e^{-4}\kappa\zeta_0\cwd[2]x_{p,q}$,
that is,
\[
    \divp\divq = a\tau(\iota^{-2})\zeta_1 x_{p,q} + be^{-2}\kappa\cwd x_{p,q} + ce^{-4}\kappa\zeta_0\cwd[2]x_{p,q}
\]
for some $a,b,c\in \Z$.
(If $p = 1$ or $q = 0$, the third term does not appear; if both $p=1$ and $q=0$, only the
first term appears. The proof is similar in those cases.)
To apply $\rho$, we first have that $\rho(x_{p,q}) = \iota^{2(q+1)}\zeta^{p-q-1}y$ where
$y$ is the generator of the nonequivariant cohomology from Proposition~\ref{prop:nonequivariant},
as we can see by comparing representatives and gradings.
Applying $\rho$ to the elements of interest to us, and writing $\zeta = \rho(\zeta_1)$, we have
\begin{align*}
    \rho(\divp\divq) &= \iota^{2q}\zeta^{p-q}c^{p+q} \\
    \rho(\tau(\iota^{-2})\zeta_1 x_{p,q}) &= 2\iota^{2q}\zeta^{p-q}y = \iota^{2q}\zeta^{p-q}c^{p+q} \\
    \rho(e^{-2}\kappa\cwd x_{p,q}) &= 0 \\
    \rho(e^{-4}\kappa\zeta_0\cwd[2]x_{p,q}) &= 0.
\end{align*}
Thus, $a = 1$. Taking fixed points, we have
\begin{align*}
    (\divp\divq)^\GG &= \bigl[(c^p,1) - 2(cy,0) \bigr] \bigl[ (1,c^q) - 2(0,y)\bigr] \\
        &= (0,1)(1,0) = (0,0) \\
    (\tau(\iota^{-2})\zeta_1 x_{p,q})^\GG &= (0,0) \\
    (e^{-2}\kappa\cwd x_{p,q})^\GG &= 2(cy, y) \\
    (e^{-4}\kappa\zeta_0\cwd[2]x_{p,q})^\GG &= 2(0,y),
\end{align*}
hence $b = c = 0$.
\end{proof}

\begin{theorem}\label{thm:db multiplicative}
As an algebra over $\HH^{\gr}(BU(1))$, 
the cohomology $\HH^\gr(\Qpq{2p}{2q+1})$
is generated by
\[
    x_{p,q} \in \HH^{p\omega + (q+1)\chi\omega - 2}(\Qpq{2p}{2q+1})
\]
subject to the facts that
\begin{align*}
    \divp &= 
    \begin{cases}
        \cwd & \text{if $p=1$} \\
        \cwd[p] - e^{-2(q+1)}\kappa\zeta_1^q\cwd x_{p,q} & \text{if $p>1$}
    \end{cases} \\
\intertext{is infinitely divisible by $\zeta_0$ and}
    \divq &= \cxwd[q] - e^{-2p}\kappa\zeta_0^{p-1} x_{p,q} 
\end{align*}
is infinitely divisible by $\zeta_1$,
and the following relations:
\begin{align*}
    x_{p,q}^2 &= \begin{cases}
                    0 & \text{if $p$ is even} \\
                    e^2\cwd[p-1]\cxwd[q]x_{p,q} & \text{if $p$ is odd}
                 \end{cases}
    \\
    \divp\divq &= \tau(\iota^{-2})\zeta_1  x_{p,q}.
\end{align*}
\end{theorem}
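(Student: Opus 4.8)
I would follow the same two–step template used for Theorem~\ref{thm:bb mutliplicative}: first check that the three asserted relations really hold in $B^\gr:=\HH^\gr(\Qpq{2p}{2q+1})$, and then run a diagram chase based on the split short exact sequence of Proposition~\ref{prop:db splitting} to see that the presentation is complete. Throughout, the cases $q=0$ (where the fixed set has only the type-D component) and $p=1$ (where $\divp$ reduces to $\cwd$ by Remark~\ref{rem:db divp simplifies}) are simpler and can be carried along.

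\emph{Verifying the relations.} The divisibility of $\divp$ by $\zeta_0$ and of $\divq$ by $\zeta_1$ is exactly Proposition~\ref{prop:db divisible}, and the identity $\divp\divq=\tau(\iota^{-2})\zeta_1 x_{p,q}$ is Lemma~\ref{lem:db divp divq}. The genuinely new computation is $x_{p,q}^2$. Since $x_{p,q}=j_!(1)$, the projection formula together with the self-intersection identity $j^*j_!(1)=e(\nu)$ (a special case of Proposition~\ref{prop:excess formula} with $N=Z=j(\Xpq pq)$ and excess bundle the normal bundle $\nu$ of $j$) gives $x_{p,q}^2=j_!(e(\nu))$, which lies in the image of $j_!$, i.e.\ in $\rels{x_{p,q}}\iso\susp^\nu\HH^\gr(\Xpq pq)$. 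As the Euler classes of $\omega\dual$, $\chi\omega\dual$ and $O(2)$ are pulled back from $BU(1)$, we may lift $e(\nu)$ to a class $\theta\in\HH^\nu(BU(1))$ and conclude $x_{p,q}^2=\theta\,x_{p,q}$. To pin $\theta$ down I would imitate the proof of Lemma~\ref{lem:db divp divq}: under $\rho$ both sides vanish (nonequivariantly $\rho(x_{p,q})$ is a maximal isotropic class in the type-B quadric $\eQp{2(p+q)+1}$, so its square is $0$; the candidate $e^2\cwd[p-1]\cxwd[q]x_{p,q}$ carries a factor $\rho(e)=0$), so $\rho$ gives no information beyond consistency; under the fixed-point map $(-)^\GG$ one uses $\Qpq{2p}{2q+1}^\GG=\Qexp{2p}\disjunion\Qexq{2q+1}$, where $x_{p,q}$ restricts to a grading-shifted maximal isotropic class on each component, and squaring invokes Proposition~\ref{prop:nonequivariant}: on the \emph{type-D} factor $\Qexp{2p}$ one gets $\epsilon\,c^{p-1}y$ with $\epsilon=1$ precisely when $p$ is odd, while on the type-B factor $\Qexq{2q+1}$ one gets $0$. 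Matching this against the fixed-point values of $e^2\cwd[p-1]\cxwd[q]x_{p,q}$ and reading off the additive basis of Corollary~\ref{cor:db additive} in the relevant grading forces $\theta=e^2\cwd[p-1]\cxwd[q]$ for $p$ odd and $\theta=0$ for $p$ even.

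\emph{The abstract argument.} Put $K^\gr=\rels{x_{p,q}}\iso\susp^\nu\HH^\gr(\Xpq pq)$, so that $B^\gr/K^\gr\iso\HH^\gr(\Xpq pq)$ by Proposition~\ref{prop:db splitting}. Let
\[
    A^\gr=\HH^\gr(BU(1))[\,x_{p,q},\ \zeta_0^{-k}\divp,\ \zeta_1^{-k}\divq\ \mid\ k\geq 1\,]
\]
with the relations $\zeta_0\cdot\zeta_0^{-k}\divp=\zeta_0^{-(k-1)}\divp$ and $\zeta_1\cdot\zeta_1^{-k}\divq=\zeta_1^{-(k-1)}\divq$ imposed, let $I^\gr\subset A^\gr$ be the ideal generated by the relations of the theorem, and $J^\gr=\rels{x_{p,q}}\subset A^\gr$. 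The verified relations produce a ring map $A^\gr/I^\gr\to B^\gr$ sending generators to elements of the same names, hence a morphism of short exact sequences from $0\to(J+I)/I\to A/I\to A/(J+I)\to 0$ to $0\to K\to B\to B/K\to 0$, and it suffices to check the two outer maps are isomorphisms. For $A/(J+I)$: setting $x_{p,q}=0$ makes $\cwd[p]=\divp$ and $\cxwd[q]=\divq$ (now divisible by $\zeta_0$ and $\zeta_1$), trivializes the $x_{p,q}^2$ relation, and collapses $\divp\divq=\tau(\iota^{-2})\zeta_1 x_{p,q}$ to $\cwd[p]\cxwd[q]=0$ — which is exactly the presentation of $\HH^\gr(\Xpq pq)$, so $A/(J+I)\iso B/K$. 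For $(J+I)/I$, which is cyclic on $x_{p,q}$ over $A$: the relations of $I$ (using $x_{p,q}^2=\theta x_{p,q}$ and the identities $\tau(a)e=0$, $\kappa\xi=0$, $\zeta_1\cxwd=e^2+\xi c$) show that $\cwd[p]x_{p,q}$ and $\cxwd[q]x_{p,q}$ become divisible by $\zeta_0$ and $\zeta_1$ up to the involution unit $1-\kappa=g-1$, while $\cwd[p]\cxwd[q]x_{p,q}=j_!(\cwd[p]\cxwd[q])=0$ since $\cwd[p]\cxwd[q]=0$ already in $\HH^\gr(\Xpq pq)$; these are precisely the relations defining $\susp^\nu\HH^\gr(\Xpq pq)$, giving $(J+I)/I\iso K$. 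Hence $A/I\iso B^\gr$.

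I expect the self-intersection computation of $x_{p,q}^2$ to be the main obstacle — in particular, tracking the equivariant Euler class of the virtual normal bundle $\nu=p\omega\dual\dirsum(q+1)\chi\omega\dual-O(2)$ and extracting the dependence on the parity of $p$ in a way that reproduces the $\epsilon$ of Proposition~\ref{prop:nonequivariant} on the type-D fixed component. Once that identity is in hand, the remainder is bookkeeping strictly parallel to the type (B,B) case.
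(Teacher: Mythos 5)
Your overall strategy is the paper's: verify the relations (citing Proposition~\ref{prop:db divisible} and Lemma~\ref{lem:db divp divq}, and pinning down $x_{p,q}^2$ by restricting to fixed points, where the type-D component contributes $\epsilon c^{p-1}y$ and the type-B component contributes $0$), then run the $A/I\to B$ diagram chase against the split sequence of Proposition~\ref{prop:db splitting}. Your route to $x_{p,q}^2=\theta x_{p,q}$ via $j^*j_!(1)$ and the projection formula is a harmless variant — the paper reads off directly from the additive structure that the only candidate in grading $2p\omega+2(q+1)\chi\omega-4$ is an integer multiple of $e^2\cwd[p-1]\cxwd[q]x_{p,q}$ — and the determination of the coefficient by $\rho$ and $(-)^\GG$ is identical.

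There is, however, one genuine gap, and it sits at the hardest point of the completeness argument. To conclude $(J+I)/I\iso\susp^\nu\HH^\gr(\Xpq pq)$ you must derive the relation $\cwd[p]\cxwd[q]x_{p,q}=0$ \emph{inside the abstract ring $A/I$}, purely from the generators of $I$. You instead justify it by $\cwd[p]\cxwd[q]x_{p,q}=j_!(\cwd[p]\cxwd[q])=0$, which is a statement about the target $B=\HH^\gr(\Qpq{2p}{2q+1})$ (via the projection formula for the actual pushforward $j_!$), not about $A/I$; used there it is circular, since the whole point is to show $A/I$ is no larger than $B$. The needed derivation is nontrivial and occupies the bulk of the paper's proof: one first gets $\divp\divq\, x_{p,q}=\tau(\iota^{-2})\zeta_1 x_{p,q}^2=0$ from $\tau(\iota^{-2})e^2=0$, then expands the definitions of $\divp$ and $\divq$ and rearranges — with separate cases for $p=1$, $p>1$ even, and $p>1$ odd — to arrive at $(1-\kappa)\cwd[p]\cxwd[q]x_{p,q}=0$, whence $\cwd[p]\cxwd[q]x_{p,q}=0$ because $1-\kappa$ is a unit. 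Only after that can one show $\divp x_{p,q}=\cwd[p]x_{p,q}$ and $\divq x_{p,q}=(\text{unit})\cdot\cxwd[q]x_{p,q}$ and extract the divisibility statements you assert. Supplying that computation closes the gap; everything else in your proposal tracks the paper's proof.
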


Note that, when $p=1$, we define $\divp[2] = \cwd$, so that we get the assumption that
$\cwd$ is infinitely divisible by $\zeta_0$. But, as in Remark~\ref{rem:db divp simplifies},
that implies that $\divp[2] = \cwd - e^{-2(q+1)}\kappa\zeta_1^q\cwd x_{1,q}$ as well.

\begin{proof}
We have already shown the divisibility of $\divp$ and $\divq$ and the relation involving
their product. The relation remaining to check is the one for $x_{p,q}^2$.
We could give a geometric proof but give an algebraic one instead.

$x_{p,q}^2$ lives in grading $2p\omega + 2(q+1)\chi\omega - 4$.
From the additive calculation, it must be an integer multiple of $e^2\cwd[p-1]\cxwd[q]x_{p,q}$.
These elements vanish on applying $\rho$, so we look at fixed points:
\begin{align*}
    (x_{p,q}^2)^\GG &= (y^2, y^2) 
        = \begin{cases}
            (0,0) & \text{if $p$ is even} \\
            (c^{p-1}y, 0) & \text{if $p$ is odd}
          \end{cases} \\
    (e^2\cwd[p-1]\cxwd[q]x_{p,q})^\GG &= (c^{p-1}y, c^qy) = (c^{p-1}y, 0)
\end{align*}
From this we can see that $x_{p,q}^2$ has the form claimed in the theorem.

We now use the same general structure for the remainder of the proof as was used
in the proof of Theorem~\ref{thm:bb mutliplicative}.
Let
\[
    K = \rels{x_{p,q}} 
    \subset B = \HH^\gr(\Qpq{2p}{2q+1}).
\]
Proposition~\ref{prop:db splitting} implies that
\begin{align*}
    K &\iso \susp^\nu\HH^\gr(\Xpq pq) \\
\intertext{and}
    B/K &\iso \HH^\gr(\Xpq pq).
\end{align*}
Let $A$ be the graded ring
\[
    A = \HH^\gr(BU(1))[x_{p,q}, \zeta_0^{-k}\divp, \zeta_1^{-k}\divq \mid k\geq 1],
\]
in which we have that $\cwd[p]x_{p,q}$ is infinitely divisible by $\zeta_0$ and
$\cxwd[q]x_{p,q}$ is infinitely divisible by $\zeta_1$,
let $I\subset A$ be the ideal generated by the relations in the theorem, and let
$J = \rels{x_{p,q}}\subset A$.

We have already checked that the relations in $I$ hold in $B$, so we
have a map $A/I \to B$ taking generators to elements of the same name,
and we get the following commutative diagram.
\[
    \xymatrix{
        0 \ar[r] & (J+I)/I \ar[r] \ar[d] & A/I \ar[r] \ar[d] & A/(J+I) \ar[r] \ar[d] & 0 \\
        0 \ar[r] & K \ar[r] & B \ar[r] & B/K \ar[r] & 0
    }
\]
$A/(J+I)$ is formed by setting $x_{p,q}$ to $0$ and then imposing the relations of the theorem,
of which the only one remaining becomes $\cwd[p]\cxwd[q] = 0$. This is exactly the presentation
of $\HH^\gr(\Xpq pq)$, so we have $A/(J+I) \iso B/K$.

To check that $(J+I)/I \iso \susp^\nu\HH^\gr(\Xpq pq)$, 
we first show that the relations in $I$ imply that $\cwd[p]\cxwd[q]x_{p,q} = 0$.
We have
\begin{align*}
    \divp\divq x_{p,q} &= \tau(\iota^{-2})\zeta_1 x_{p,q}^2 \\
    &= \begin{cases}
        0 & \text{if $p$ is even} \\
        \tau(\iota^{-2})\zeta_1 \cdot e^2\cwd[p-1]\cxwd[q]x_{p,q} & \text{if $p$ is odd}
       \end{cases} \\
    &= 0
\end{align*}
because $\tau(\iota^{-2})e^2 = \tau(\iota^{-2}\rho(e^2)) = 0$.
If $p = 1$, so $\divp = \cwd$, expanding the definition of $\divq$ and rearranging gives
\begin{align*}
    \cwd\cxwd[q]x_{p,q} &= e^{-2}\kappa\cwd x_{p,q}^2 \\
    &= \kappa\cwd\cxwd[q] x_{p,q},
\end{align*}
hence $(1-\kappa)\cwd\cxwd[q]x_{p,q} = 0$, which implies that $\cwd\cxwd[q] x_{p,q} = 0$ because
$1-\kappa$ is a unit.
If $p > 1$, expanding the definitions of $\divp$ and $\divq$ and rearranging gives
\begin{align*}
    \cwd[p]\cxwd[q]x_{p,q}
    &= \bigl(e^{-2p}\kappa\zeta_0^{p-1}\cwd[p] + e^{-2(q+1)}\kappa\zeta_1^q\cwd\cxwd[q]\bigr)x^2_{p,q} \\
    &\qquad\qquad    + 2e^{-2(p+q+1)}\kappa\zeta_0^{p-1}\zeta_1^q\cwd x_{p,q}^3 \\
    &= 0 \\
\intertext{if $p$ is even, but if $p$ is odd we get instead}
    &= \cwd[p]\cxwd[q]x_{p,q}(e^{-2(p-1)}\kappa\zeta_0^{p-1}\cwd[p-1] + e^{-2q}\kappa\zeta_1^q\cxwd[q]) \\
\intertext{(this uses that $e^{-2(p+q+1)}\kappa\xi = 0$)}
    &= \cwd[p]\cxwd[q]x_{p,q}\cdot e^{-2}\kappa(\zeta_0\cwd + \zeta_1\cxwd) \\
    &= \cwd[p]\cxwd[q]x_{p,q}\cdot e^{-2}\kappa(g\zeta_0\cwd + e^2) \\
    &= \kappa\cwd[p]\cxwd[q]x_{p,q},
\end{align*}
hence $(1-\kappa)\cwd[p]\cxwd[q] x_{p,q} = 0$ and again we may conclude that $\cwd[p]\cxwd[q] x_{p,q} = 0$.

We now have in $(J+I)/I$ that
\begin{align*}
    \divp x_{p,q} &= (\cwd[p] - e^{-2(q+1)}\kappa\zeta_1^q\cwd x_{p,q})x_{p,q} \\
    &= \begin{cases}
         \cwd[p]x_{p,q} & \text{if $p$ is even} \\
         \cwd[p]x_{p,q} - e^{-2q}\kappa\zeta_1^q\cwd[p]\cxwd[q] x_{p,q} & \text{if $p$ is odd}
       \end{cases} \\
    &= \cwd[p]x_{p,q}
\end{align*}
and
\begin{align*}
    \divq x_{p,q} &= (\cxwd[q] - e^{-2p}\kappa\zeta_0^{p-1} x_{p,q})x_{p,q} \\
    &= \begin{cases}
         \cxwd[q]x_{p,q} & \text{if $p$ is even} \\
         \cxwd[q]x_{p,q} - e^{-2(p-1)}\kappa\zeta_0^{p-1}\cwd[p-1]\cxwd[q] x_{p,q} & \text{if $p$ is odd}
       \end{cases} \\
    &= \begin{cases}
        \cxwd[q] x_{p,q} &\text{if $p$ is even} \\
        (1-\kappa)\cxwd[q] x_{p,q} &\text{if $p=1$} \\
        (1-\epsilon)\cxwd[q] x_{p,q} &\text{if $p>1$ is odd}
       \end{cases}
\end{align*}
where $\epsilon = e^{-2}\kappa\zeta_0\cwd$ and $1-\epsilon$ is a unit
(but see the remark following this proof).
These two calculations imply that $\cwd[p]x_{p,q}$ is divisible by $\zeta_0$ and
$\cxwd[q]x_{p,q}$ is divisible by $\zeta_1$, by the assumptions on $\divp$ and $\divq$.
We therefore have the relations that say that
$(J+I)/I \iso \susp^\nu\HH^\gr(\Xpq pq)$ as claimed.

Therefore, $A/I \iso \HH^\gr(\Qpq{2p}{2q+1})$ has the structure asserted in the theorem.
\end{proof}

\begin{remark}
We showed in the proof above that the relations imply that $\divq x_{p,q} = (1-\epsilon)\cxwd[q]x_{p,q}$
when $p>1$ is odd, from which we deduced that $\cxwd[q]x_{p,q}$ is divisible by $\zeta_1$. But once we know
that divisibility, we have
\begin{align*}
    (1-\epsilon)\cxwd[q]x_{p,q}
    &= (1-e^{-2}\kappa\zeta_0\cwd)\cxwd[q]x_{p,q} \\
    &= \cxwd[q]x_{p,q} - e^{-2}\kappa\xi\zeta_1^{-1}\cwd\cxwd[q]x_{p,q} \\
    &= \cxwd[q]x_{p,q}.
\end{align*}
The case $p=1$ cannot be so rewritten.
\end{remark}

\begin{remark}\label{rem:db cwd p cxwd q}
As we did in the (B,B) case, we can compute $\cwd[p]\cxwd[q]$. The computation has various cases,
depending on the parity of $p$ and whether it is 1 or larger, but all cases have the same result:
\[
    \cwd[p]\cxwd[q] = \bigl(\tau(\iota^{-2})\zeta_1 + e^{-2}\kappa\cwd\bigr)x_{p,q}.
\]
The calculation is otherwise similar to that in Remark~\ref{rem:bb divp times divq}.
When we apply $\rho$, this becomes
\[
    c^{p+q} = 2y \qquad\text{in } H^{2(p+q)}(\eQp{2(p+q)+1}).
\]
When we take fixed points it becomes
\[
    (c^p, c^q) = (2cy, 2y) \qquad\text{in } H^{2p}(\eQp{2p})\dirsum H^{2q}(\eQp{2q+1}).
\]
\end{remark}

\section{Quadrics of type (D,D)}

Finally, we look at quadrics of the form $\Qpq{2p}{2q}$.
Once again, we will restrict to
grading on $RO(\Pi BU(1))$ even though $RO(\Pi\Qpq{2p}{2q})$ may be larger,
and return to the exceptional cases $p=1$ or $q=1$ in the sequel to this paper.

Define
\begin{align*}
    i\colon \Xpq pq &\to \Qpq{2p}{2q}, & i[\vec x:\vec y] &= [\vec x:\vec 0:\vec y:\vec 0] \\
\intertext{and, as in (\ref{def:j}),}
    j\colon \Xpq pq &\to \Qpq{2p}{2q}, & j[\vec u:\vec v] &= [\vec 0:\vec u:\vec 0:\vec v].
\end{align*}
The normal bundle to $j$ is $\nu = p\omega\dual\dirsum q\chi\omega\dual - O(2)$, which, as a dimension,
is $\nu = p\omega + q\chi\omega - 2$.
Write
\[
    x_{p,q} = [j(\Xpq pq)]^* \in \HH^\nu(\Qpq{2p}{2q}).
\]

\begin{proposition}\label{prop:dd cofibration}
We have a cofibration sequence
\[
    \Xpq pq_+ \xrightarrow{i} (\Qpq{2p}{2q})_+ \to \susp^\nu j(\Xpq pq)_+
\]
where $\nu$ is the normal bundle to $j$.
\end{proposition}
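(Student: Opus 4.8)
The plan is to follow the pattern of Propositions~\ref{prop:bb cofibration} and~\ref{prop:bd cofibration}: exhibit the open complement of $i(\Xpq pq)$ inside $\Qpq{2p}{2q}$ as the total space of the normal bundle to $j$, after which the cofibration is formal.

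First I would pin down $i(\Xpq pq)$ set-theoretically. In the coordinates $[\vec x:\vec u:\vec y:\vec v]$ a point of $\Qpq{2p}{2q}$ with $(\vec u,\vec v)=\vec 0$ automatically satisfies the defining equation $\sum_i x_iu_i+\sum_j y_jv_j=0$, so $i(\Xpq pq)$ is precisely the closed $\GG$-submanifold $\{(\vec u,\vec v)=\vec 0\}$ and its complement $U$ is the open set $\{(\vec u,\vec v)\neq\vec 0\}$; in particular $i(\Xpq pq)$ and $j(\Xpq pq)$ are disjoint. On $U$ the assignment $[\vec x:\vec u:\vec y:\vec v]\mapsto[\vec 0:\vec u:\vec 0:\vec v]=j[\vec u:\vec v]$ is a well-defined $\GG$-map $\pi\colon U\to j(\Xpq pq)$.

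Next I would identify $\pi$ with the normal bundle to $j$. Over $j[\vec u:\vec v]$ the fibre of $\pi$ is the set of $[\vec x:\vec u:\vec y:\vec v]$ satisfying $\sum_i x_iu_i+\sum_j y_jv_j=0$; after normalizing $(\vec u,\vec v)$ using the scaling action this is the complex-linear subspace of the $(\vec x,\vec y)$-coordinates cut out by the single linear condition and having $j[\vec u:\vec v]$ as origin, and as $[\vec u:\vec v]$ ranges over $\Xpq pq$ these fibres fit together into a $\GG$-vector bundle. A routine local-trivialization computation, of the kind carried out for the antisymmetric quadrics in \cite{CH:QuadricsI}, identifies this bundle with the normal bundle $\nu=p\omega\dual\dirsum q\chi\omega\dual - O(2)$ of $j$ recorded just above; I would cite that argument rather than reproduce it.

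Finally, since $\Qpq{2p}{2q}$ is compact and $i(\Xpq pq)$ is a closed $\GG$-submanifold, the cofibre of $i$ is $\Qpq{2p}{2q}/i(\Xpq pq)\iso U^+$, the one-point compactification of $U$; but $U^+$ is the one-point compactification of the total space of the normal bundle to $j$ over the compact base $j(\Xpq pq)\homeo\Xpq pq$, which is exactly the Thom space $\susp^\nu j(\Xpq pq)_+$. This gives the asserted cofibration sequence. There is no genuine obstacle here; the one point requiring care is to track the $\GG$-action throughout --- that $U$ is the normal bundle \emph{equivariantly}, and that the virtual correction by $O(2)$ coming from the quadric hypersurface is accounted for --- but this runs exactly parallel to the situation in Proposition~\ref{prop:bd cofibration}.
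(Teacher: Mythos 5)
Your argument is the same as the paper's: the paper's proof consists precisely of observing that the projection $\Qpq{2p}{2q}\setminus i(\Xpq pq) \to j(\Xpq pq)$, $[\vec x:\vec u:\vec y:\vec v]\mapsto[\vec 0:\vec u:\vec 0:\vec v]$, can be identified with the normal bundle $\nu$, and you have simply filled in the routine details (that $i(\Xpq pq)$ is the closed locus $(\vec u,\vec v)=\vec 0$, the fibrewise identification with $\nu$, and the passage from the one-point compactification of the complement to the Thom space). The proposal is correct.
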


\begin{proof}
The projection
\[
    \Qpq{2p}{2q}\setminus i(\Xpq pq) \to j(\Xpq pq), \quad
    [\vec x:\vec u:\vec y:\vec v] \mapsto [\vec 0:\vec u:\vec 0:\vec v]
\]
can be identified with the normal bundle $\nu$.
\end{proof}

We have two inclusions from smaller quadrics we consider. Let
\[
    k\colon \Qpq{2p-1}{2q} \to \Qpq{2p}{2q}
\]
be defined by
\[
    k[\vec x:z:\vec u:\vec y:\vec v] = [\vec x:z:z:\vec u:\vec y:\vec v]
\]
and let
\[
    \ell\colon \Qpq{2p}{2q-1} \to \Qpq{2p}{2q}
\]
be defined by
\[
    \ell[\vec x:\vec u:\vec y:w:\vec v] = [\vec x:\vec u:\vec y:w:w:\vec v].
\]

\begin{proposition}\label{prop:dd divisible}
In $\HH^\gr(\Qpq{2p}{2q})$, the element
\[
    \divp = \divp[2p] = \divp[2p,2q] := k_!(\divp[2p-1,2q]) = \cwd[p] - e^{-2q}\kappa\zeta_1^{q-1}\cwd x_{p,q}
\]
is infinitely divisible by $\zeta_0$. Similarly, the element
\[
    \divq = \divq[2q] = \divq[2p,2q] := \ell_!(\divq[2p,2q-1]) = \cxwd[q] - e^{-2p}\kappa\zeta_0^{p-1}\cxwd x_{p,q}
\]
is infinitely divisible by $\zeta_1$.
\end{proposition}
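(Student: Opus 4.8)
The plan is to follow the template of the proof of Proposition~\ref{prop:db divisible}: both $\divp[2p]$ and $\divq[2q]$ are \emph{defined} as pushforwards, along $k$ and $\ell$ respectively, of classes already known to be infinitely divisible, and divisibility is inherited through these pushforwards. For $\divp$, note that $\divp[2p-1,2q]$ lives in the cohomology of the type~(B,D) quadric $\Qpq{2p-1}{2q}$; since $\Qpq{2p-1}{2q} \homeo \Qpq{2q}{2p-1}$ is of type~(D,B), and this diffeomorphism interchanges the two fixed components and hence $\zeta_0$ with $\zeta_1$, Proposition~\ref{prop:db divisible} read through the diffeomorphism tells us that $\divp[2p-1,2q]$ is infinitely divisible by $\zeta_0$. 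Because $k_!$ is a map of $\HH^\gr(BU(1))$-modules by the projection formula and $\zeta_0$ is pulled back from $BU(1)$, writing $\divp[2p-1,2q] = \zeta_0^n w_n$ for each $n\geq 1$ gives $\divp[2p] = k_!(\divp[2p-1,2q]) = \zeta_0^n k_!(w_n)$, whence $\divp[2p]$ is infinitely divisible by $\zeta_0$. For $\divq$ the argument is the mirror image, with $\omega$ and $\chi\omega$, the two blocks of coordinates, and $\zeta_0$ and $\zeta_1$ interchanged; here $\Qpq{2p}{2q-1}$ is already of type~(D,B), so $\divq[2p,2q-1]$ is infinitely divisible by $\zeta_1$ directly by Proposition~\ref{prop:db divisible}, and its pushforward $\ell_!(\divq[2p,2q-1])$ then inherits this.

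What is left is to confirm the stated closed formula for the pushforward. First I would identify $\cwd = k_!(1) = [\Qpq{2p-1}{2q}]^*$: the section $x_p - u_p$ of $\omega\dual$ on $\Qpq{2p}{2q}$ has zero locus exactly $k(\Qpq{2p-1}{2q})$, so its Euler class $\cwd$ is dual to that submanifold. Next, $\cwd x_{p,q} = j_!(\cwd) = j_!([\Xpq{p-1}{q}]^*)$ and $k_!(x_{p-1,q})$ are both represented by the image of $\Xpq{p-1}{q}$ under the evident embedding into $\Qpq{2p}{2q}$ — on the $j$ side this image is $k(\Qpq{2p-1}{2q})\intersect j(\Xpq pq)$ — with the same equivariant normal bundle, so $k_!(x_{p-1,q}) = \cwd x_{p,q}$. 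Applying the projection formula to the expansion $\divp[2p-1,2q] = \cwd[p-1] - e^{-2q}\kappa\zeta_1^{q-1}x_{p-1,q}$, using that $\cwd$, $\zeta_1$, $\kappa$ and $e$ all come from $BU(1)$ or from the cohomology of a point, gives
\[
    k_!(\divp[2p-1,2q]) = \cwd[p-1]k_!(1) - e^{-2q}\kappa\zeta_1^{q-1}k_!(x_{p-1,q}) = \cwd[p] - e^{-2q}\kappa\zeta_1^{q-1}\cwd x_{p,q},
\]
exactly as claimed. The computation for $\ell_!(\divq[2p,2q-1])$ is identical once one observes that the section $y_q - v_q$ of $\chi\omega\dual$ cuts out $\ell(\Qpq{2p}{2q-1})$, so that $\cxwd = \ell_!(1)$ and $\ell_!(x_{p,q-1}) = \cxwd x_{p,q}$.

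The main obstacle is not conceptual but is the geometric bookkeeping involved in verifying the pushforward formula above: one must check carefully that the two embeddings of $\Xpq{p-1}{q}$ into $\Qpq{2p}{2q}$ land on the same submanifold with the same equivariant normal bundle, so that they represent the same cohomology class, and track the grading shifts so that the degrees on both sides of the formula agree; this is where essentially all the content sits. Finally, the cases $p=1$ and $q=1$ should be recorded separately — the grading group is genuinely larger there, as flagged at the start of the section — but in those cases the relevant smaller quadric $\Qpq{1}{2q}$, respectively $\Qpq{2p}{1}$, has an empty fixed component, so every class in its cohomology is divisible by $\zeta_0$, respectively $\zeta_1$, and the argument only becomes simpler; indeed the formula for $\divp$ then collapses to $\cwd$ just as in Remark~\ref{rem:db divp simplifies}.
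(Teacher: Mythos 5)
Your proposal is correct and follows essentially the same route as the paper: the paper's proof simply defers to the argument of Proposition~\ref{prop:db divisible} (identify $\cwd = k_!(1)$ via the section $x_p - u_p$, identify $k_!(x_{p-1,q}) = \cwd x_{p,q}$ by comparing representatives, apply the projection formula, and note that infinite divisibility by $\zeta_0$ passes through the pushforward), which is exactly what you spell out, including the use of the diffeomorphism $\Qpq{2p-1}{2q}\homeo\Qpq{2q}{2p-1}$ to transport the known divisibility to the type (B,D) source. The only quibble is your closing aside: for $p=q=1$ the formula does not collapse to $\cwd$ on the nose but only up to the unit $1-e^{-2}\kappa x_{1,1}$, as recorded in Remark~\ref{rem:dd divp simplifies}; this does not affect the proof.
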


\begin{proof}
The proof that $k_!(\divp[2p-1,2q])$ has the form claimed is similar to the argument in the proof of
Proposition~\ref{prop:db divisible}. The divisibility of $\divp[2p-1,2q]$ then implies the divisibility of $\divp[2p,2q]$.
The argument for $\divq[2p,2q]$ is similar.
\end{proof}

\begin{remark}\label{rem:dd divp simplifies}
As in the (D,B) case, the formulas simplify if $p = 1$ or $q = 1$ and the other is larger:
\begin{alignat*}{2}
    \divp[2,2q] &= \cwd   &\qquad&\text{if $q > 1$} \\
    \divq[2p,2] &= \cxwd &&\text{if $p > 1$.}
\end{alignat*}
In the case $p = q = 1$, we have
\begin{align*}
    \divp[2,2] &= \cwd - e^{-2}\kappa \cwd x_{1,1} = \cwd(1 - e^{-2}\kappa x_{1,1}) \\
    \divq[2,2] &= \cxwd - e^{-2}\kappa \cxwd x_{1,1} = \cxwd(1 - e^{-2}\kappa x_{1,1})
\end{align*}
and these don't simplify. 
On the other hand, in Remark~\ref{rem:dd unit}, we will show that the factor $1 - e^{-2}\kappa x_{1,1}$ is a unit,
in fact is its own inverse,
so the divisibility of $\divp[2,2]$ and $\divq[2,2]$ imply the same for $\cwd$ and $\cxwd$.
This divisibility also follows from the calculation $\eta_0(\cwd) = 0$,
which is true because it is the Euler class of a bundle over the discrete
space $\Qexp 2$ with fibers $\C$, and similarly for $\cxwd$.
\end{remark}

\begin{proposition}\label{prop:dd splitting}
There is a split short exact sequence
\[
    0 \to \susp^\nu\HH^\gr(\Xpq pq) \xrightarrow{j_!}
    \HH^\gr(\Qpq{2p}{2q}) \xrightarrow{i^*}
    \HH^\gr(\Xpq pq) \to 0
\]
where $\nu = p\omega + q\chi\omega - 2$.
\end{proposition}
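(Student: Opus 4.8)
The plan is to run the cofibration of Proposition~\ref{prop:dd cofibration} through the long exact sequence in $\HH^\gr$ and argue, exactly as in the proofs of Propositions~\ref{prop:bb splitting} and~\ref{prop:db splitting}, that it breaks into the asserted short exact sequences. Applying $\HHR^\gr(-)$ to
\[
    \Xpq pq_+ \xrightarrow{i} (\Qpq{2p}{2q})_+ \to \susp^\nu j(\Xpq pq)_+
\]
and using the Thom isomorphism $\HHR^\gr(\susp^\nu j(\Xpq pq)_+) \iso \susp^\nu\HH^\gr(\Xpq pq)$ (with the Thom class that defines $x_{p,q} = j_!(1)$, so that the map out of this term is $j_!$), we obtain a long exact sequence in which $j_!$ and $i^*$ are consecutive and the next map is the connecting homomorphism $\partial$. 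It therefore suffices to show that $i^*$ is surjective: then $\partial = 0$, exactness forces $j_!$ to be injective, and the resulting short exact sequence splits because $\HH^\gr(\Xpq pq)$ is free over $\HS$, as computed in \cite{CHTFiniteProjSpace}, hence projective. The shift $\nu = p\omega + q\chi\omega - 2$ is read off from the normal bundle $\nu = p\omega\dual\dirsum q\chi\omega\dual - O(2)$ recorded just before Proposition~\ref{prop:dd cofibration}.

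The content is thus the surjectivity of $i^*$. Since $\Xpq pq \includesin \Qpq{2p}{2q} \includesin BU(1)$, the map $i^*$ is a map of $\HH^\gr(BU(1))$-modules, so it hits every generator of $\HH^\gr(\Xpq pq)$ that comes from $BU(1)$. The generators that do not come from $BU(1)$ are $\HH^\gr(BU(1))$-multiples of $\zeta_0^{-k}\cwd[p]$ and $\zeta_1^{-k}\cxwd[q]$ for $k \geq 1$, so it is enough to produce these in the image. The representatives $i(\Xpq pq)$ and $j(\Xpq pq)$ are disjoint in $\Qpq{2p}{2q}$, so $i^*(x_{p,q}) = 0$; combined with the formula $\divp = \cwd[p] - e^{-2q}\kappa\zeta_1^{q-1}\cwd x_{p,q}$ from Proposition~\ref{prop:dd divisible}, this gives $i^*(\divp) = \cwd[p]$, and since $\divp$ is infinitely divisible by $\zeta_0$ we get
\[
    \zeta_0^{-k}\cwd[p] = \zeta_0^{-k}i^*(\divp) = i^*(\zeta_0^{-k}\divp),
\]
so $\zeta_0^{-k}\cwd[p]$ lies in the image of $i^*$. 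The same argument with $\divq$ and $\zeta_1$ handles $\zeta_1^{-k}\cxwd[q]$, and $i^*$ is surjective.

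This argument is a near-verbatim adaptation of the (B,B) and (D,B) cases, so there is no serious obstacle; the only point needing a moment's care is the exceptional behaviour when $p=1$ or $q=1$. There the relevant Euler class $\cwd$ (or $\cxwd$) is already infinitely divisible by $\zeta_0$ (or $\zeta_1$) by Remark~\ref{rem:dd divp simplifies}, using if necessary that the factor $1 - e^{-2}\kappa x_{1,1}$ appearing in $\divp[2,2]$ and $\divq[2,2]$ is a unit (Remark~\ref{rem:dd unit}); in every case one still has $i^*(\divp) = \cwd[p]$ and $i^*(\divq) = \cxwd[q]$, so the displayed identity above applies and surjectivity follows a fortiori. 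Once $i^*$ is surjective, the passage to the split short exact sequence is formal, as in Propositions~\ref{prop:bb splitting} and~\ref{prop:db splitting}.
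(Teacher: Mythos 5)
Your proof is correct and follows essentially the same route as the paper: the paper reduces Proposition~\ref{prop:dd splitting} to the argument of Propositions~\ref{prop:db splitting} and~\ref{prop:bb splitting}, namely showing $i^*$ is surjective by hitting the elements $\zeta_0^{-k}\cwd[p]$ and $\zeta_1^{-k}\cxwd[q]$ via the divisible elements $\divp$ and $\divq$ of Proposition~\ref{prop:dd divisible}, with the splitting coming from freeness. Your additional care with the cases $p=1$ or $q=1$ is consistent with Remarks~\ref{rem:dd divp simplifies} and~\ref{rem:dd unit} and does not change the argument.
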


\begin{proof}
The proof is essentially the same as the proof of Proposition~\ref{prop:db splitting},
this time using the divisibility of $\divp[2p]$ and $\divq[2q]$.
\end{proof}

\begin{corollary}\label{cor:dd additive}
Additively,
\[
    \HH^\gr(\Qpq{2p}{2q}) \iso \HH^\gr(\Xpq pq) \dirsum \susp^\nu\HH^\gr(\Xpq pq)
\]
where $\nu = p\omega + q\chi\omega - 2$.
\qed
\end{corollary}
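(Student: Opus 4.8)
The plan is to read this off directly from the split short exact sequence established in Proposition~\ref{prop:dd splitting}. A split short exact sequence of graded Mackey-functor-valued $\HS$-modules $0 \to A \to B \to C \to 0$ yields an isomorphism $B \iso A \dirsum C$, and applying this with $A = \susp^\nu\HH^\gr(\Xpq pq)$, $B = \HH^\gr(\Qpq{2p}{2q})$, and $C = \HH^\gr(\Xpq pq)$ gives exactly the claimed decomposition. So there is essentially nothing to do beyond invoking the previous proposition, and the corollary can be stated with \qed, just as the analogous Corollary~\ref{cor:db additive} was in the (D,B) case.

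If one wished to spell out where the splitting comes from: the cofibration of Proposition~\ref{prop:dd cofibration} produces a long exact sequence in $\HH^\gr$, which Proposition~\ref{prop:dd splitting} shows breaks into short exact sequences because $i^*$ is surjective --- and that surjectivity is precisely what the divisible elements $\divp[2p]$ and $\divq[2q]$ of Proposition~\ref{prop:dd divisible} provide, exactly as in the proof of Proposition~\ref{prop:bb splitting}. The sequence then splits because $\HH^\gr(\Xpq pq)$ is a free $\HS$-module, so the surjection $i^*$ admits an $\HS$-linear section $s$, and $(j_!,s)$ exhibits the direct sum decomposition.

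The only point that requires any care --- and it is not really an obstacle --- is that we are working with Mackey-functor-valued modules rather than plain abelian groups, so one should note that a short exact sequence of Mackey functors that is split by a map of $\HS$-modules does split as a direct sum of Mackey functors (the splitting being automatically compatible with restriction, transfer, and the $t$-action). No genuine difficulty arises, so I would simply present the corollary as an immediate consequence of Proposition~\ref{prop:dd splitting}.
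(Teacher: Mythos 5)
Your proposal is correct and matches the paper exactly: the corollary is stated with \qed precisely because it is an immediate consequence of the split short exact sequence of Proposition~\ref{prop:dd splitting}, just as Corollary~\ref{cor:db additive} followed from Proposition~\ref{prop:db splitting}. The extra remarks you make about the source of the splitting (surjectivity of $i^*$ via the divisible elements, freeness of $\HH^\gr(\Xpq pq)$ over $\HS$) accurately reflect the content of the preceding propositions and add nothing that conflicts with the paper.
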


The significant difference between this and Corollary~\ref{cor:db additive} is
in $\nu$, which was $p\omega + (q+1)\chi\omega - 2$ in the earlier corollary.
The value of $\nu$ in Corollary~\ref{cor:dd additive}
produces two basis elements reducing to elements with the same
nonequivariant grading, to match what we know has to happen in 
the nonequivariant cohomology of quadrics of type~D.

Finally, the multiplicative structure. 
In the process, we redefine $\divp[2,2]$ and $\divq[2,2]$, which will be harmless
by Remark~\ref{rem:dd divp simplifies}.

\begin{theorem}\label{thm:dd multiplicative}
As an algebra over $\HH^{RO(\Pi B)}(BU(1))$, $\HH^\gr(\Qpq{2p}{2q})$ is generated by
\[
    x_{p,q} \in \HH^{p\omega + q\chi\omega - 2}(\Qpq{2p}{2q})
\]
subject to the facts that
\begin{alignat*}{2}
    \divp &= 
    \begin{cases}
        \cwd &\text{if $p=1$} \\
        \cwd[p] - e^{-2q}\kappa\zeta_1^{q-1}\cwd x_{p,q} &\text{if $p>1$}
    \end{cases} \\
\intertext{is infinitely divisible by $\zeta_0$ and}
    \divq &= 
    \begin{cases}
        \cxwd &\text{if $q=1$} \\
        \cxwd[q] - e^{-2p}\kappa\zeta_0^{p-1}\cxwd x_{p,q} &\text{if $q > 1$}
    \end{cases}
\end{alignat*}
is infinitely divisible by $\zeta_1$,
and the following relations:
\begin{align*}
    x_{p,q}^2 &= \begin{cases}
                    0 & \text{if $p$ and $q$ are even} \\
                    e^2\cwd[p-1]\cxwd[q-1]x_{p,q} & \text{if $p$ and $q$ are odd} \\
                    \zeta_0\cwd[p]\cxwd[q-1]x_{p,q} & \text{if $p$ is even and $q$ is odd} \\
                    \zeta_1\cwd[p-1]\cxwd[q]x_{p,q} & \text{if $p$ is odd and $q$ is even}
                 \end{cases} \\
    \divp \divq &= \tau(\iota^{-2})\zeta_0\cwd x_{p,q}.
\end{align*}
\end{theorem}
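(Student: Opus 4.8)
The plan is to follow verbatim the strategy of the proofs of Theorems~\ref{thm:bb mutliplicative} and~\ref{thm:db multiplicative}. First I would verify that the listed relations actually hold in $B = \HH^\gr(\Qpq{2p}{2q})$; then introduce the abstract graded $\HH^\gr(BU(1))$-algebra
\[
    A = \HH^\gr(BU(1))[x_{p,q}, \zeta_0^{-k}\divp, \zeta_1^{-k}\divq \mid k\geq 1],
\]
let $I\subset A$ be the ideal generated by the relations in the theorem and $J = \rels{x_{p,q}}$, and construct the evident ring map $A/I\to B$ sending generators to the classes of the same name. Finally, writing $K = \rels{x_{p,q}}\subset B$, I would compare the short exact sequences $0\to (J+I)/I\to A/I\to A/(J+I)\to 0$ and $0\to K\to B\to B/K\to 0$ using the additive splitting of Proposition~\ref{prop:dd splitting}, and check that the two outer vertical maps are isomorphisms, so that the five lemma gives $A/I\iso B$.

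For the relations: infinite divisibility of $\divp$ and $\divq$ is Proposition~\ref{prop:dd divisible} (with the reductions of Remark~\ref{rem:dd divp simplifies} when $p=1$ or $q=1$). The relation for $x_{p,q}^2$ is checked as in the (D,B) case: by Corollary~\ref{cor:dd additive} it lies in a grading where every element is an integer multiple of the claimed right-hand side plus classes killed by $\rho$, so one applies $\rho$ — using $\rho(x_{p,q}) = \iota^{a}\zeta^{b}y$ and the type-D relations of Proposition~\ref{prop:nonequivariant} for the underlying quadric $\Qexp{2(p+q)}$ — and then pins down the coefficient by restricting to the fixed set $\Qexp{2p}\disjunion\Qexq{2q}$, where $(x_{p,q}^2)^\GG = (y^2, y^2)$ with $y^2 = \epsilon c^{p-1}y$ on $\Qexp{2p}$ ($\epsilon$ the parity of $p$) and $y^2 = \epsilon' c^{q-1}y$ on $\Qexq{2q}$ ($\epsilon'$ the parity of $q$); the four cases in the statement are precisely the four parity combinations. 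For $\divp\divq$ one proves a lemma in the style of Lemma~\ref{lem:db divp divq}: in grading $p\omega + q\chi\omega$ the additive calculation shows $\divp\divq$ is a $\ZZ$-combination of $\tau(\iota^{-2})\zeta_0\cwd x_{p,q}$ together with $e^{-n}\kappa$-multiples of $x_{p,q}$; applying $\rho$ (where $\rho\tau$ is multiplication by $1+t$ and the $\kappa$-terms die) identifies the coefficient of the first term as $1$, and taking fixed points — where $\divp^\GG = c^p - 2cy = 0$ on the $\Qexp{2p}$-component, with the parallel analysis on $\Qexq{2q}$ — forces the other coefficients to vanish.

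For the comparison, setting $x_{p,q}=0$ in $A/I$ collapses every relation to $\cwd[p]\cxwd[q] = 0$ (extracted from the $\divp\divq$ relation, exactly as in the (D,B) case), which is the presentation of $\HH^\gr(\Xpq pq)$, so $A/(J+I)\iso B/K$. For the kernel, $(J+I)/I$ is generated over $A$ by $x_{p,q}$, so one must show the relations force $\cwd[p]\cxwd[q]x_{p,q} = 0$ and make $\cwd[p]x_{p,q}$, $\cxwd[q]x_{p,q}$ divisible by $\zeta_0$, $\zeta_1$. I would feed the $x_{p,q}^2$ relation into the expansion of $\divp\divq\, x_{p,q} = \tau(\iota^{-2})\zeta_0\cwd\, x_{p,q}^2$, using $\tau(\iota^{-2})e^2 = \tau(\iota^{-2}\rho(e^2)) = 0$, $e^{-n}\kappa\xi = 0$, and the identities relating $\zeta_0\cwd$, $\zeta_1\cxwd$, $e^2$ and $g=\tau(1)$ in $\HH^\gr(BU(1))$ already exploited in the (D,B) proof, now with the extra $\zeta_0$- or $\zeta_1$-prefactors from the mixed-parity cases of $x_{p,q}^2$ absorbed via $\xi = \zeta_0\zeta_1$; then $\divp x_{p,q} = \cwd[p]x_{p,q}$ and $\divq x_{p,q} = \cxwd[q]x_{p,q}$ (the latter up to a unit $1-\epsilon$ in the odd cases) give the divisibilities, and one reads off exactly the defining relations of $\susp^\nu\HH^\gr(\Xpq pq)$, so $(J+I)/I\iso K$. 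The main obstacle is this last verification: with $p$ and $q$ of independent parities there are (up to symmetry) three genuinely different cases in the formula for $x_{p,q}^2$, and in the mixed-parity cases the $\zeta_0$- or $\zeta_1$-factors must be moved past $\cwd$ or $\cxwd$ carefully when expanding $\divp\divq\, x_{p,q}$, making the bookkeeping heavier than in the (B,B) or (D,B) proofs. A secondary nuisance is the exceptional behaviour at $p=1$ or $q=1$, where $\divp$ or $\divq$ is redefined and, when $p=q=1$, the factor $1 - e^{-2}\kappa x_{1,1}$ must be recognized as a unit (indeed its own inverse), as in Remark~\ref{rem:dd divp simplifies} and the remark it promises.
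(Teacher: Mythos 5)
Your proposal takes essentially the same route as the paper: the paper's proof of Theorem~\ref{thm:dd multiplicative} verifies only the two stated relations, by exactly the $\rho$-then-fixed-points argument you describe (writing $x_{p,q}^2 = \alpha\,\zeta_0\cwd[p]\cxwd[q-1]x_{p,q} + \beta\, e^2\cwd[p-1]\cxwd[q-1]x_{p,q}$ with $\alpha\in A(\GG)$, which is what produces the $1-\kappa$, i.e.\ the $\zeta_1\cwd[p-1]\cxwd[q]x_{p,q}$ form, in the mixed-parity case), and then declares the remaining $A/I\iso B$ comparison ``very similar'' to the (D,B) proof and leaves it to the reader. The extra bookkeeping you flag for $(J+I)/I\iso K$ and the $p=1$, $q=1$ exceptions is exactly the part the paper omits, and your outline of it is consistent with the (D,B) template and Remarks~\ref{rem:dd divp simplifies} and~\ref{rem:dd unit}.
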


\begin{proof}
The proof is very similar to that of Theorem~\ref{thm:db multiplicative}.
We will check here only that the two stated relations hold, and leave the rest of the
proof to the reader.

To calculate $\divp\divq$, we note that it lives in grading
$p\omega + q\chi\omega$. From the additive calculation, all elements
in that grading are given by integer linear combinations of
$\tau(\iota^{-2})\zeta_0\cwd x_{p,q}$,
$e^{-2}\kappa\cwd\cxwd x_{p,q}$, and $e^{-4}\kappa\zeta_0\cwd[2]\cxwd x_{p,q}$, so
\[
    \divp\divq = \alpha \tau(\iota^{-2})\zeta_0\cwd x_{p,q} + \beta e^{-2}\kappa\cwd\cxwd x_{p,q} 
    + \gamma e^{-4}\kappa\zeta_0\cwd[2]\cxwd x_{p,q}
\]
for some integers $\alpha$, $\beta$, and $\gamma$.
Applying $\rho$, we get
\begin{align*}
    \rho(\divp\divq) &= c^{p+q} = 2cy \\
    \rho(\tau(\iota^{-2})\zeta_0\cwd x_{p,q}) &= 2cy \\
    \rho(e^{-2}\kappa\cwd\cxwd x_{p,q}) &= 0 \\
    \rho(e^{-4}\kappa\zeta_0\cwd[2]\cxwd x_{p,q}) &= 0
\end{align*}
so $\alpha = 1$.
Taking fixed points, we get
\begin{align*}
    (\divp\divq)^\GG &= \bigl[(c^p,1) - 2(cy,0)\bigr]\bigl[ (1,c^q) - 2(0,cy) \bigr] \\
        &= (0,0) \\
    (\tau(\iota^{-2})\zeta_0\cwd x_{p,q})^\GG &= 0 \\
    (e^{-2}\kappa\cwd\cxwd x_{p,q})^\GG &= (2cy,2cy) \\
    (e^{-4}\kappa\zeta_0\cwd[2]\cxwd x_{p,q})^\GG &= (0,2cy)
\end{align*}
so $\beta = \gamma = 0$. Therefore, $\divp\divq = \tau(\iota^{-2})\zeta_0\cwd x_{p,q}$ as claimed.
(The calculations above assume that $p,q>1$, but the special cases where $p=1$ or $q=1$ are similar.)

The element $x_{p,q}^2$ lives in grading $2p\omega + 2q\chi\omega - 4$,
where all elements are linear combinations
of $\zeta_0\cwd[p]\cxwd[q-1]x_{p,q}$ and $e^2\cwd[p-1]\cxwd[q-1]x_{p,q}$, that is,
\[
    x_{p,q}^2 = \alpha \zeta_0\cwd[p]\cxwd[q-1]x_{p,q} + \beta e^2\cwd[p-1]\cxwd[q-1]x_{p,q}
\]
for some $\alpha\in A(\GG)$ and $beta\in\Z$.
Applying $\rho$, we get
\begin{align*}
    \rho(x_{p,q}^2) &= y^2
        = \begin{cases}
            0 & \text{if $p+q$ is even} \\
            c^{p+q-1}y & \text{if $p+q$ is odd}
          \end{cases} \\
    \rho(\zeta_0\cwd[p]\cxwd[q-1]x_{p,q}) &= c^{p+q-1}y \\
    \rho(e^2\cwd[p-1]\cxwd[q-1]x_{p,q}) &= 0,
\end{align*}
so $\rho(\alpha) = 0$ if $p+q$ is even and $1$ if $p+q$ is odd.
Taking fixed points, we get
\begin{align*}
    (x_{p,q}^2)^\GG &= (y^2,y^2) \\
        &= \begin{cases}
            (0,0) & \text{if $p$ and $q$ are even} \\
            (c^{p-1}y, c^{q-1}y) & \text{if $p$ and $q$ are odd} \\
            (0, c^{q-1}y) & \text{if $p$ is even and $q$ is odd} \\
            (c^{p-1}y, 0) & \text{if $p$ is odd and $q$ is even}
           \end{cases} \\
    (\zeta_0\cwd[p]\cxwd[q-1]x_{p,q})^\GG &= (0, c^{q-1}y) \\
    (e^2\cwd[p-1]\cxwd[q-1]x_{p,q})^\GG &= (c^{p-1}y, c^{q-1}y).
\end{align*}
From this we get
\[
    (\alpha^\GG,\beta) =
    \begin{cases}
        (0,0) &\text{if $p$ and $q$ are even} \\
        (0,1) &\text{if $p$ and $q$ are odd} \\
        (1,0) &\text{if $p$ is even and $q$ is odd} \\
        (-1,1) &\text{if $p$ is odd and $q$ is even,}
    \end{cases}
\]
which tells us that
\[
    \alpha =
    \begin{cases}
        0 & \text{if $p$ and $q$ have the same parity} \\
        1 & \text{if $p$ is even and $q$ is odd} \\
        1-\kappa & \text{if $p$ is odd and $q$ is even}
    \end{cases}
\]
This gives us the relation stated in the theorem after we note in the last case that
we can rewrite
\[
    (1-\kappa) \zeta_0\cwd[p]\cxwd[q-1]x_{p,q} + e^2\cwd[p-1]\cxwd[q-1]x_{p,q}
    = \zeta_1\cwd[p-1]\cxwd[q]x_{p,q}.
    \qedhere
\]
\end{proof}

\begin{remark}\label{rem:dd unit}
In the case $p = q = 1$, we have
\begin{align*}
    (1 - e^{-2}\kappa x_{1,1})^2
    &= 1 - 2e^{-2}\kappa x_{1,1} + 2e^{-4}\kappa x_{1,1}^2 \\
    &= 1 - 2e^{-2}\kappa x_{1,1} + 2e^{-2}\kappa x_{1,1} \\
    &= 1.
\end{align*}
Hence, $1-e^{-2}\kappa x_{1,1}$ is a unit in the cohomology of $\Qpq 22$.
This justifies our redefinition of $\divp[2,2]$ and $\divq[2,2]$ as $\cwd$ and $\cxwd$, respectively, 
as their original definitions were
\begin{align*}
    \cwd - e^{-2}\kappa\cwd x_{1,1} &= \cwd(1-e^{-2}\kappa x_{1,1}) \qquad\text{and} \\
    \cxwd - e^{-2}\kappa\cxwd x_{1,1} &= \cxwd(1-e^{-2}\kappa x_{1,1}),
\end{align*}
which differ from $\cwd$ and $\cxwd$ by units.
Notice also that $\divp\divq = \cwd\cxwd$ with either definition.

We will explain the existence of this unit further in \cite{CH:QuadricsIII},
where the identification of $\Qpq 22$ with $\Xpq 1{}\times\Xpq 1{}$
will identify this unit with one found in \cite{Co:InfinitePublished}.
\end{remark}

\begin{remark}
The relation for $x_{p,q}^2$ was proved algebraically above
and we repeat the idea here for clarity, then give a geometric justification.
Using the relation $e^2 = \zeta_0\cwd - (1-\kappa)\zeta_1\cxwd$, we can write
$x_{p,q}^2$ in the following form.
\[
    x_{p,q}^2 = \begin{cases}
                    0 & \text{if $p$ and $q$ are even} \\
                    \zeta_0\cwd[p]\cxwd[q-1]x_{p,q} - (1-\kappa)\zeta_1\cwd[p-1]\cxwd[q]x_{p,q} 
                        & \text{if $p$ and $q$ are odd} \\
                    \zeta_0\cwd[p]\cxwd[q-1]x_{p,q} & \text{if $p$ is even and $q$ is odd} \\
                    \zeta_1\cwd[p-1]\cxwd[q]x_{p,q} & \text{if $p$ is odd and $q$ is even.}
                \end{cases} \\
\]
Applying $\rho$, this reduces to the known nonequivariant relation in $H^{4(p+q-1)}(\eQp{2(p+q)})$:
\[
    y^2 = \begin{cases}
            0 & \text{if $p+q$ is even} \\
            c^{p+q-1}y & \text{if $p+q$ is odd}
          \end{cases}
\]
On the other hand, on taking fixed points, we see the nonequivariant relations that hold
in $H^{2(p-1)}(\eQp{2p})\dirsum H^{2(q-1)}(\eQp{2q})$:
\[
    (y^2, y^2) =
    \begin{cases}
        (0,0) & \text{if $p$ and $q$ are even} \\
        (c^{p-1}y, c^{q-1}y) & \text{if $p$ and $q$ are odd} \\
        (0,c^{q-1}y) & \text{if $p$ is even and $q$ is odd} \\
        (c^{p-1}y, 0) & \text{if $p$ is odd and $q$ is even.}
    \end{cases}
\]
(We use the fact that $\rho(1-\kappa) = 1$ but $(1-\kappa)^\GG = -1$.)
The expression for $x_{p,q}^2$ gives the unique element that
respects these nonequivariant relations.

Here is a geometric explanation, based on the idea that $x_{p,q}^2$
should be represented by some $0$-dimensional $\GG$-set in $\Qpq{2p}{2q}$. We defined
\[
    x_{p,q} = [j(\Xpq pq)]^*
\]
where
\[
    j(\Xpq pq) = \{ [\vec 0:\vec u:\vec 0:\vec v] \in \Qexpq{2p}{2q} \}.
\]
Looking at other isotropic affine subspaces and paying attention to the parity of codimensions,
we can also write
\[
    x_{p,q} = [Z]^*
\]
where $Z\subset \Qexpq{2p}{2q}$ is the following subvariety, 
in which we write, for example, $\vec x_{p}$ to indicate
an arbitrary vector of length $p$:
\[
    Z = 
    \begin{cases}
        \{[\vec x_p:\vec 0_p:\vec y_q:\vec 0_q]\} & \text{if $p$ and $q$ are even} \\
        \{[\vec x_{p-1}:0:u:\vec 0_{p-1}:\vec y_{q-1}:0:v:\vec 0_{q-1}] \}
            &\text{if $p$ and $q$ are odd} \\
        \{[\vec x_p:\vec 0_p:\vec y_{q-1}:0:v:\vec 0_{q-1}]\}
            &\text{if $p$ is even and $q$ is odd} \\
        \{[\vec x_{p-1}:0:u:\vec 0_{p-1}:\vec y_q:\vec 0_q]\}
            &\text{if $p$ is odd and $q$ is even.}
    \end{cases}
\]
We can then compute $x_{p,q}^2$ by looking at the intersection
\[
    j(\Xpq pq)\intersect Z =
    \begin{cases}
        \emptyset & \text{if $p$ and $q$ are even} \\
        \{[\vec 0_p:1:\vec 0_{p-1}:\vec 0_q:\vec 0_q], \\
         \quad [\vec 0_p:\vec 0_p:\vec 0_q:1:\vec 0_{q-1}], \\
         \quad [\vec 0_p:1:\vec 0_{p-1}:\vec 0_q:\pm 1:\vec 0_{q-1}] \}
            & \text{if $p$ and $q$ are odd} \\
        [\vec 0_p:\vec 0_p:\vec 0_q:1:\vec 0_{q-1}]
            & \text{if $p$ is even and $q$ is odd} \\
        [\vec 0_p:1:\vec 0_{p-1}:\vec 0_q:\vec 0_q]
            & \text{if $p$ is odd and $q$ is even.}
    \end{cases}
\]
The first case clearly represents 0. The third case is a single point pushed forward from $\Xpq pq$ along $j$:
\[
    [j(\Xq{})]^* = j_!(\zeta_0\cwd[p]\cxwd[q-1]) = \zeta_0\cwd[p]\cxwd[q-1]x_{p,q}
\]
(where the factor of $\zeta_0$ is present to put the element in the correct grading).
Similarly, the fourth case is
\[
    [j(\Xp{})]^* = j_!(\zeta_1\cwd[p-1]\cxwd[q]) = \zeta_1\cwd[p-1]\cxwd[q]x_{p,q}.
\]
The second case is more interesting. The $\GG$-set given by the intersection consists
of two fixed points and one free orbit. The two fixed points represent, as above,
$\zeta_0\cwd[p]\cxwd[q-1]x_{p,q}$ and $\zeta_1\cwd[p-1]\cxwd[q]x_{p,q}$.
The free orbit we can take as representing $g\zeta_1\cwd[p-1]\cxwd[q]x_{p,q}$.
(We could also use $g\zeta_0\cwd[p]\cxwd[q-1]x_{p,q}$; the two elements are equal.)
However, these points may have multiplicities associated with them, which we can resolve
by looking at them nonequivariantly, where the four points must cancel to give 0 because $p+q$ is even,
and at fixed points, where we must end up with a single positive point in each fixed set component
because $p$ and $q$ are odd.
The result is that we must have
\begin{align*}
    x_{p,q}^2 &= \zeta_0\cwd[p]\cxwd[q-1]x_{p,q} + \zeta_1\cwd[p-1]\cxwd[q]x_{p,q}
                - g\zeta_1\cwd[p-1]\cxwd[q]x_{p,q} \\
    &= \zeta_0\cwd[p]\cxwd[q-1]x_{p,q} - (1-\kappa)\zeta_1\cwd[p-1]\cxwd[q]x_{p,q}
\end{align*}
when $p$ and $q$ are odd.
\end{remark}

\begin{remark}
Finally, as for the other types of quadrics, we can compute $\cwd[p]\cxwd[q]$. The result is
\[
    \cwd[p]\cxwd[q] = \bigl(\tau(\iota^{-2})\zeta_0\cwd + e^{-2}\kappa\cwd\cxwd\bigr) x_{p,q}
\]
This relation can be understood as coming from the similar relation in either
$\Qpq{2p-1}{2q}$ or $\Qpq{2p}{2q-1}$
given in
Remark~\ref{rem:db cwd p cxwd q}, pushed forward along the map $k$ or $\ell$, respectively,
as used in Proposition~\ref{prop:dd divisible}.
(For this we use the relation $\tau(\iota^{-2})\zeta_0\cwd = \tau(\iota^{-2})\zeta_1\cxwd$.)
\end{remark}

\bibliography{Bibliography}{}
\bibliographystyle{amsplain} 

\end{document}